\newtheorem{theorem}{Theorem}[section]
\newtheorem{proposition}[theorem]{Proposition}
\newtheorem{corollary}[theorem]{Corollary}
\newtheorem{lemma}{Lemma}[section]
\theoremstyle{definition}
\newtheorem{definition}{Definition}[section]
\DeclareSymbolFont{AMSb}{U}{msb}{m}{n}
\DeclareMathSymbol{\N}{\mathbin}{AMSb}{"4E}
\DeclareMathSymbol{\Z}{\mathbin}{AMSb}{"5A}
\DeclareMathSymbol{\R}{\mathbin}{AMSb}{"52}
\DeclareMathSymbol{\Q}{\mathbin}{AMSb}{"51}
\DeclareMathSymbol{\I}{\mathbin}{AMSb}{"49}
\DeclareMathSymbol{\C}{\mathbin}{AMSb}{"43}
\newcommand{\ndiv}{\hspace{-4pt}\not|\hspace{2pt}}
\theoremstyle{definition}
\begin{document}
\title{Bounded Rank-One transformations}
\author{Su Gao}
\address{Department of Mathematics\\ University of North Texas\\ 1155 Union Circle \#311430\\  Denton, TX 76203\\ USA}
\email{sgao@unt.edu}
\author{Aaron Hill}
\address{Department of Mathematics\\ University of North Texas\\ 1155 Union Circle \#311430\\  Denton, TX 76203\\ USA}
\email{Aaron.Hill@unt.edu}
\date{\today}
\subjclass[2010]{Primary 54H20, 37A05, 37B10; Secondary 54H05, 37C15}
\keywords{rank-one transformation, rank-one word, rank-one system, totally ergodic, trivial centralizer, weak mixing, MSJ}
\thanks{The first author acknowledges the US NSF grants DMS-0901853 and DMS-1201290 for the support of his research. The second author acknowledges the US NSF grant DMS-0943870 for the support of his research.}

\begin{abstract}
We define the notion of canonical boundedness among rank-one transformations and use it to characterize the class of all bounded rank-one transformations with trivial centralizer. We also explicitly characterize totally ergodic rank-one transformations with bounded cutting parameter. Together with a recent result of Ryzhikov our results provide a simple procedure for determining whether a bounded rank-one transformation has minimal self-joinings of all orders purely in terms of the cutting and spacer parameters for the transformation.
\end{abstract}
\maketitle\thispagestyle{empty}

\section{Introduction}
Rank-one transformations have been used as a source of examples and counterexamples for the study of important dynamical properties in ergodic theory. This paper contributes to the understanding of rank-one transformations with respect to properties such as total ergodicity, trivial centralizer, weak mixing, and MSJ (minimal self-joinings of all orders). Rank-one transformations with or without such properties have been constructed and investigated by various authors (cf. \cite{AbdalaouiNogueiradelaRue} \cite{Ageev} \cite{Bjork} \cite{Bourgain} \cite{Chacon2} \cite{delJunco} \cite{delJuncoRudolph} \cite{delJuncoRaheSwanson} \cite{Ferenczi} \cite{King88} \cite{KingThouvenot} \cite{Ryzhikov}).

Rank-one transformations are usually described by a sequence $(q_n : n \in \N)$ of integers greater than 1, called the {\em cutting parameter}, and a doubly indexed sequence $(a_{n, i}: n \in \N, 0 < i < q_n)$ of non-negative integers, called the {\em spacer parameter}.  If the cutting and spacer parameters are both bounded, we say the transformation being described is {\em bounded}.  In other words, a rank-one transformation is bounded if it admits a pair of cutting and spacer parameters that are both bounded.

It is important to note that different pairs of cutting and spacer parameters can be used to describe the same rank-one transformation.  In particular, any bounded rank-one transformation can also be described by cutting and spacer parameters that are unbounded.  It will be helpful to establish the following convention. When we speak of a bounded rank-one transformation, we always tacitly fix a pair of cutting and spacer parameters that are bounded. We will refer to these fixed parameters as {\em the} cutting and spacer parameters for the transformation. Our results will be independent of the particular bounded parameters chosen to describe a transformation.

Ryzhikov \cite{Ryzhikov} recently showed that a bounded rank-one transformation has MSJ if and only if it is totally ergodic and has trivial centralizer.  In this paper we give simple methods for determining whether a bounded rank-one transformation has trivial centralizer and is totally ergodic. By the result of Ryzhikov, we obtain a simple method for determining whether a bounded rank-one transformation has MSJ.

It turns out that the notion of {\it canonical} cutting and spacer parameters will play a crucial role. This was defined by the authors in \cite{GaoHill}.  In general, given a pair of cutting and spacer parameters, an associated pair of canonical cutting and spacer parameters can be calculated. The operation of finding the canonical cutting and spacer parameters is idempotent. Also, the canonical spacer parameter is just a rearrangement of the given spacer parameter, and therefore it is bounded if and only if the given spacer parameter is bounded. However, the boundedness of the canonical cutting parameter is not correlated with the boundedness of the given cutting parameter. 

We say that a bounded rank-one transformation is {\it canonically bounded} if for its given pair of cutting and spacer parameters, the corresponding canonical cutting and spacer parameters are bounded. Canonically bounded rank-one transformations are bounded. Our first main result is that the calculation of the canonical cutting and spacer parameters is precisely what is needed to determine if a bounded rank-one transformation has trivial centralizer.

\begin{theorem}
\label{thmintrotrivialcentralizer}
A bounded rank-one transformation has trivial centralizer iff it is canonically bounded.
\end{theorem}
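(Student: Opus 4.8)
The plan is to prove the two implications separately, in both cases analyzing how an element of the centralizer interacts with the Rokhlin tower structure: towers with base $B_n$, height $h_n$, and levels $T^i B_n$ for $0 \le i < h_n$, where the passage from stage $n$ to stage $n+1$ cuts $B_n$ into $q_n$ subcolumns and restacks them with the spacers $a_{n,i}$. Since the canonicalization operation is idempotent, I may in each direction replace the fixed parameters by their canonical counterparts; and since the canonical spacer parameter is a rearrangement of the given (bounded) spacer parameter, it is automatically bounded. Hence \emph{canonically bounded} is equivalent to \emph{the canonical cutting parameter is bounded}, and this is the single dichotomy that drives both directions.

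For the forward direction, assume the (canonical) cutting parameter is bounded and let $S$ be a measure-preserving transformation commuting with $T$. By King's theorem the centralizer of a rank-one transformation is the weak closure of $\{T^k : k \in \Z\}$, so I may write $S$ as a weak limit of powers $T^{k_j}$. I would then track the displacement of $S$ along the towers: for each $n$, up to a set of small measure, $S$ carries a level $T^i B_n$ into a level $T^{i + d_n} B_n$ for a single integer $d_n$ read modulo $h_n$, and commutation with $T$ makes this displacement level-independent within stage $n$. The content of this direction is that, because the canonical cutting parameter is bounded, the induced self-matching of the canonical stage-$n$ word cannot shift by a nonzero multiple of a subblock length without being detected as a mismatch; this forces the coherent sequence $(d_n)$ to be eventually constant, equal to some fixed $d$, and hence $S = T^d$ almost everywhere.

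For the reverse direction I argue contrapositively, constructing a nontrivial centralizer element under the assumption that the canonical cutting parameter is unbounded. I would pass to a subsequence of stages $n_k$ along which the canonical cutting value tends to infinity. By the defining property of canonical parameters, an unbounded canonical cutting value at stage $n$ displays the stage-$n$ word as a long concatenation of identical canonical subblocks, and this genuine internal repetition yields, for each such $n$, a block-shift $\sigma_n$ of the tower that commutes with $T$ up to an error confined to a vanishing fraction of the tower. The plan is to choose these shifts so that the associated powers $T^{k_j}$ converge weakly to a single transformation $S$, to verify --- using canonicity to rule out redundant presentations --- that $S$ is an honest invertible measure-preserving transformation commuting with $T$, and to check that the block-shift displacements are unbounded relative to any fixed stabilized tower, so that $S$ cannot coincide with any single power $T^d$.

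The main obstacle is this reverse construction: confirming that the weakly convergent block-shifts assemble into a genuine invertible measure-preserving transformation rather than a degenerate weak limit, and that the resulting $S$ is truly distinct from every $T^d$. Both points hinge on canonicity, which is precisely what certifies that the repetitions witnessed by an unbounded canonical cutting parameter are authentic symmetries of the rank-one word rather than artifacts of a redundant parameter presentation. Translating this combinatorial input from \cite{GaoHill} into the measure-theoretic conclusion that $S$ is invertible, measure-preserving, commuting, and nontrivial is where the bulk of the technical work will lie.
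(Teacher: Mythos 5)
Your high-level framing (the dichotomy is boundedness of the canonical cutting parameter, since the canonical spacer parameter is a rearrangement of the given bounded one) matches the paper, but both implications as sketched have real gaps. In the reverse direction, the central combinatorial fact is missing: a large canonical cutting value $q_{n}$ at stage $n$ does \emph{not} by itself display $v_{n+1}$ as a concatenation of copies of $v_n$ separated by \emph{equal} spacers, and without equal spacers the block shift by $h_{n}+a_{n,1}$ does not approximately commute with $T$, so your construction does not get off the ground. The paper derives the constancy of the spacers at the chosen stages from the existence of the \emph{bounded} presentation: since $q_{n_k}$ exceeds the bound $Q$ on the other cutting parameter, a word $v'_m$ of the bounded generating sequence must sit strictly between $v_{n_k}$ and $v_{n_k+1}$, and Lemma~\ref{lemma:main} then forces $v_{n_k}\prec_s v_{n_k+1}$. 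Moreover, your plan to assemble the block shifts into an explicit non-power $S$ (and then to prove $S$ is invertible, measure-preserving, and distinct from every $T^d$) is both harder than necessary and genuinely problematic, since weak limits of powers need not converge in $\textnormal{Aut}(X,\mu)$. The paper sidesteps all of this: with equal spacers one shows directly that $\sigma^{r_k}\to\textnormal{id}$ weakly for $r_k=h_{n_k}+a_{n_k,1}$ (the error is at most $2/q_{n_k}$ of each level), i.e.\ the system is rigid, and then King's theorem makes the centralizer perfect, hence nontrivial. You never need to exhibit a specific non-power.

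In the forward direction your sketch reduces to the assertion that a bounded canonical cutting parameter forces the displacement sequence $(d_n)$ of a commuting $S$ to stabilize, but no mechanism is given, and this is exactly where all the work lies. The paper does not track weak-limit powers at all; it takes an arbitrary $\psi$ commuting with $\sigma$, uses density of the sets $E_{n,i}$ to find a level almost carried onto a level, classifies expected occurrences of a canonical word $W_l$ as good or bad, and runs an ergodic-theorem counting argument (Propositions~\ref{propmuae}--\ref{propcontradiction}) showing that a maximal string of bad occurrences propagates into an entire totally bad occurrence of some $W_m$, contradicting the density bound. The quantitative inputs there — the bound $Q$ on \emph{both} canonical parameters, the choice of $W_k$ containing $a_{\textnormal{max}}$ consecutive $1$s so that two occurrences of $W_k$ either coincide or are far apart (Lemma~\ref{facts=t}), and Lemma~\ref{propW_{n+2}} on unexpected occurrences — are what make ``a mismatch is detected'' precise, and none of them appear in your outline. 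You should also note the paper's separate treatment of the degenerate case: when the bounded parameters are trivial, $T$ is an odometer, which is neither canonically bounded nor of trivial centralizer, and this case must be excluded before passing to the symbolic setting.
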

This in particular implies that a rank-one transformation is canonically bounded iff for {\it any} given pair of bounded cutting and spacer parameters, the canonical cutting and spacer parameters are bounded.  In other words, for a bounded rank-one transformation, being canonically bounded does not depend on the choice of the given parameters.

This characterization of trivial centralizer can be restated in terms of the cutting and spacer parameters as follows.

\begin{theorem}
\label{thmintrotrivialcentralizer2}
Let $(q_n)$ and $(a_{n,i})$ be the cutting and spacer parameters for a rank-one transformation $T$ and suppose the cutting and spacer parameters are both bounded.  Then $T$ has trivial centralizer iff there exists $k \in \N$ such that for all $N \in \N$ there exist $n, m,i,j$, with $N \leq n,m < N+k$, $0<i<q_n$ and $0<j<q_m$, such that $a_{n,i} \neq a_{m,j}$.
\end{theorem}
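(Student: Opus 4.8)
The plan is to derive Theorem~\ref{thmintrotrivialcentralizer2} from Theorem~\ref{thmintrotrivialcentralizer} by translating the property of being canonically bounded into the combinatorial condition on the spacer parameter. First I would observe that, since the given spacer parameter $(a_{n,i})$ is assumed bounded and the canonical spacer parameter is merely a rearrangement of it, the canonical spacer parameter is automatically bounded. Hence for a transformation whose given cutting and spacer parameters are both bounded, being canonically bounded is equivalent to the single requirement that its canonical cutting parameter be bounded. By Theorem~\ref{thmintrotrivialcentralizer} it therefore suffices to prove that the canonical cutting parameter is bounded if and only if the condition stated in the theorem holds.

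Next I would unwind the logical form of that condition. Its negation asserts that for every $k$ there is a window $[N, N+k)$ of stages throughout which every spacer $a_{n,i}$ (for $N \le n < N+k$ and $0 < i < q_n$) takes one and the same value; that is, there exist arbitrarily long runs of consecutive stages whose spacers are all equal to a common constant. Thus the condition is exactly the statement that the lengths of such uniform runs are bounded, and the task becomes to show that the canonical cutting parameter is bounded precisely when these uniform runs have bounded length.

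The heart of the argument is the analysis of the canonicalization procedure from \cite{GaoHill}. A run of consecutive stages all of whose spacers equal a single constant $c$ produces a self-similar piece of the generating word, and this is exactly the redundancy that the canonicalization absorbs: a maximal uniform run of stages is merged into a single canonical stage whose cutting value is the product of the original cutting values over that run. Since each $q_n \ge 2$, this product is at least $2^{\ell}$ for a run of length $\ell$, so arbitrarily long uniform runs force the canonical cutting parameter to be unbounded, while a uniform bound on run lengths keeps each merged cutting value bounded and hence keeps the whole canonical cutting parameter bounded. Combining this equivalence with the reduction above yields the theorem.

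I expect the main obstacle to be making this third step precise against the exact definition of canonicalization in \cite{GaoHill}: I must verify that it is exactly the maximal uniform runs, and nothing else, that get merged, identify correctly the common value (including the possibility $c \neq 0$) and the boundaries separating runs of differing constants, and check that the bookkeeping of spacers at the junctions between merged blocks introduces no off-by-one discrepancy that would misalign the window $[N, N+k)$ with the run decomposition. Once the correspondence between uniform runs and canonical stages is pinned down, the quantifier manipulation translating ``bounded run length'' into the stated condition is routine.
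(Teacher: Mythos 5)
Your overall strategy is exactly the paper's: Theorem~\ref{thmintrotrivialcentralizer2} is deduced from Theorem~\ref{thmintrotrivialcentralizer} by proving that, for bounded given parameters, boundedness of the canonical cutting parameter is equivalent to the stated combinatorial condition (this is precisely Proposition~\ref{propequivalent}). Your first two reductions are correct and match the paper: the canonical spacer parameter is a rearrangement of the given one, so only the canonical cutting parameter is at issue, and the negation of the condition is the existence of arbitrarily long ``uniform runs'' of stages with a single common spacer value.

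The gap is in your third step, which carries all the content. Your description of canonicalization --- that each maximal uniform run of stages of the given sequence is merged into a single canonical stage whose cutting value is the product of the run's cutting values --- is not how the canonical generating sequence is defined and is not literally true. The canonical sequence is defined intrinsically from the rank-one word $V$ (Definition~\ref{definition:canonical}): its terms are the $v\in A_V$ admitting no $u\prec v\prec w$ with $u\prec_s w$, where $u,w$ range over \emph{all} of $A_V$, not just the given generating sequence. Consequently a canonical word need not be any $w_m$, and a $w_m$ at the boundary of a uniform run can still fail to be canonical because of witnesses $u,w$ invisible in the given parameters. The direction ``long uniform run $\Rightarrow$ large canonical cutting value'' does follow fairly directly from Lemma~\ref{lemma:comparable} and the definition of canonicity, but the converse --- the one you need to conclude that bounded runs give a bounded canonical cutting parameter --- is genuinely nontrivial: one must show that if even one word of $\mathcal{F}$ lies strictly between consecutive canonical words $v_n\prec v_{n+1}$ then $v_n\prec_s v_{n+1}$ (Lemma~\ref{lemma:main}), which the paper proves using the lattice structure of $(A_V,\preceq)$ via Lemmas~\ref{lemma:simplybuilt2}, \ref{lemma:left} and \ref{lemma:right}; only then does Lemma~\ref{lemma:simplybuilt}(b) transfer $v_n\prec_s v_{n+1}$ down to $w_N\prec_s w_{N+k}$ and produce the uniform run. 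You correctly identify this as the main obstacle, but your proposal supplies no argument for it: the ``merging'' picture assumes the very correspondence that Lemma~\ref{lemma:main} is needed to establish.
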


Our next main result of this paper is a characterization of total ergodicity for all rank-one transformations with a bounded cutting parameter.  

\begin{theorem}
\label{thmintrototalergodicity}
Let $(q_n)$ and $(a_{n,i})$ be the cutting and spacer parameters for a rank-one transformation $T$ and suppose the cutting parameter is bounded.  Then $T$ is totally ergodic iff for every $d>1$ and $N \in \N$, there are $n \geq N$ and $0 < i < q_n$ such that $d$ does not divide $h_N + a_{n,i}$.
\end{theorem}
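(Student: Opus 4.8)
The plan is to reduce total ergodicity to the absence of nontrivial rational eigenvalues and then to read the eigenvalue condition off the cutting-and-stacking data. Since rank-one transformations are ergodic, $T$ is totally ergodic iff for no $d>1$ is a primitive $d$th root of unity an eigenvalue; and because only the order $d$ enters the phase computation below, it suffices to decide, for each $d>1$, whether $\lambda = e^{2\pi i/d}$ is an eigenvalue. I write $h_n$ for the height of the $n$th tower, so that $h_{n+1}=q_nh_n+\sum_{0<i<q_n}a_{n,i}$, and for $0\le i<q_n$ I set $\ell_{n,i}=\sum_{0<j\le i}(h_n+a_{n,j})$ (so $\ell_{n,0}=0$), the level of the $(n{+}1)$st tower at which the base of the $i$th subcolumn sits. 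I will show that $\lambda$ is an eigenvalue iff $d\mid h_n+a_{n,i}$ for all sufficiently large $n$ and all $0<i<q_n$, and then convert this into the stated form involving $h_N$.

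First I would treat the direction needing no boundedness: if $d\mid h_N+a_{n,i}$ for all $n\ge N$ and all $i$, then $T$ is not totally ergodic. The key combinatorial fact is that the return times of the first-return map of $T$ to the base $B_N$ of the $N$th tower (with respect to the invariant measure $\mu$) are exactly the integers $h_N+a_{n,i}$ with $n\ge N$, $0<i<q_n$: a point at the base of a copy of the $N$th tower rises $h_N$ levels to its top and then crosses a single spacer block of some length $a_{n,i}$ before landing at the base of the next copy. Granting this, the function $\phi(x)$ equal to the elapsed time since the last visit to $B_N$, reduced mod $d$, satisfies $\phi\circ T=\phi+1$ modulo $d$ off a null set precisely because every return time is $\equiv 0\pmod d$; hence $e^{2\pi i\phi/d}$ is a nonconstant eigenfunction with eigenvalue $\lambda\neq 1$, so $T^d$ is not ergodic.

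The substantive direction is the converse: if $\lambda=e^{2\pi i/d}$ is an eigenvalue, I must produce an $N$ with $d\mid h_n+a_{n,i}$ for all $n\ge N$ and all $i$, and this is where bounded cutting is used. Fix an eigenfunction $f$ with $|f|\equiv 1$ and $f\circ T=\lambda f$, and set $c_N=\mu(B_N)^{-1}\int_{B_N}f\,d\mu$. Because $f(T^jx)=\lambda^jf(x)$, the normalized $L^2$ oscillation of $f$ is the same on every level of the $N$th tower; since that tower fills the space and the conditional expectations of $f$ onto the level partitions converge to $f$, this common oscillation tends to $0$, forcing $|c_N|\to 1$. Reading off how the base of the $(N{+}1)$st tower sits inside the $N$th gives the recursion $c_N=\big(\tfrac1{q_N}\sum_{i=0}^{q_N-1}\lambda^{\ell_{N,i}}\big)c_{N+1}$, so the modulus of the phase average $\tfrac1{q_N}\sum_i\lambda^{\ell_{N,i}}$ tends to $1$. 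Here $q_N$ is bounded, the summands lie in the finite set of $d$th roots of unity, and the $i=0$ summand equals $1$; a spectral-gap argument over the finitely many possible configurations then forces $\lambda^{\ell_{N,i}}=1$ for every $i$ once $N$ is large, i.e. $d\mid\ell_{N,i}$ and hence, telescoping, $d\mid h_n+a_{n,i}$ for all large $n$.

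Finally I would reconcile the two forms of the condition. If $d\mid h_n+a_{n,i}$ for all $n\ge N_0$ and all $i$, then the height recursion gives $h_{n+1}\equiv q_nh_n-(q_n-1)h_n=h_n\pmod d$, so $h_n\equiv h_{N_0}\pmod d$ for every $n\ge N_0$; consequently the conditions $d\mid h_n+a_{n,i}$ and $d\mid h_{N_0}+a_{n,i}$ agree, and taking $N=N_0$ yields exactly the negation of the statement in the theorem, whose converse form is the one used in the easy direction. Assembling the pieces, $T$ fails to be totally ergodic iff some $d>1$ and $N$ witness $d\mid h_N+a_{n,i}$ for all $n\ge N$ and all $i$, which is the contrapositive of the asserted characterization. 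I expect the main obstacle to be the hard direction, specifically the passage from the analytic fact $|c_N|\to 1$ to the exact divisibility holding uniformly at every deep stage: it is precisely the boundedness of the cutting parameter that supplies the spectral gap making ``the average has modulus near $1$'' equivalent to ``every phase is exactly $1$,'' and without it this implication fails.
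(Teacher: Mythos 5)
Your proposal is correct, but it takes a genuinely different route from the paper's. The paper never mentions eigenvalues: it proves a four-way equivalence between (i) total ergodicity, (ii) the existence, for each $d>1$ and $N$, of a return time $k$ to $B_N$ with $d$ not dividing $k$, (iii) the stated condition on $h_N+a_{n,i}$, and (iv) the existence of a level $l$ with $T^l(B_{n+1})\subseteq B_n$ and $d$ not dividing $l$; the hard implication (iv)$\Rightarrow$(i) is a direct measure-theoretic argument in which, given a $T^k$-invariant set $A$ of positive measure and one tower level $\delta$-almost contained in $A$, one iteratively enlarges the set of residues $s$ mod $k$ for which every level $T^l(B_M)$ with $l\equiv s$ is almost contained in $A$, using a return level not divisible by the generator of the stabilizer of the current residue set; boundedness of $(q_n)$ enters only as the loss factor $2kQ$ incurred at each of at most $k-1$ enlargement steps. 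You instead route everything through the rational point spectrum: your easy direction builds an explicit eigenfunction from the ``time since last visit to $B_N$'' cocycle (the same information as the paper's set $\bigcup_r T^{rd}(B_N)$ in its (i)$\Rightarrow$(ii) step), and your hard direction combines $|c_N|\to 1$ with the recursion $c_N=\bigl(q_N^{-1}\sum_i\lambda^{\ell_{N,i}}\bigr)c_{N+1}$ and a spectral gap that is uniform in $N$ precisely because $q_N\leq Q$ and the phases are $d$th roots of unity with the $i=0$ phase pinned at $1$. Both arguments use boundedness at the same logical spot, and your reconciliation of the two divisibility conditions via $h_{n+1}\equiv h_n \pmod d$ is exactly the computation in the paper's (iii)$\Rightarrow$(iv) step. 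Your approach is more conceptual and identifies the rational eigenvalues explicitly; the paper's stays entirely inside the cutting-and-stacking formalism, avoids $L^2$/martingale machinery, and yields the intermediate characterizations (ii) and (iv) as stated byproducts. If you write yours up, the facts you invoke in one line each deserve a few sentences: that for ergodic $T$, $T^d$ is non-ergodic iff $T$ has an eigenvalue $\lambda\neq 1$ with $\lambda^d=1$; that the level partitions generate the full $\sigma$-algebra, so conditional expectations converge and $|c_N|\to 1$; and that almost every first-return time to $B_N$ has the form $h_N+a_{n,i}$ (the exceptional points lying in the last $N$-block of every higher tower form a null set).
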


The $h_N$ in the theorem above refers to the height of the stage-$N$ tower in the cutting and stacking definition of $T$; it is defined recursively by $h_0 = 1$ and $h_{n+1} = q_n h_n + \sum_{0 < i < q_n} a_{n,i}$.

We remark that a sufficient condition for totally ergodicity was obtained by Creutz and Silva in Theorem 7 of \cite{CreutzSilva}.  Their condition also deals with the congruence classes of the entries in the spacer parameter. 

With Ryzhikov's result and the fact that MSJ implies weak mixing implies total ergodicity, Theorems \ref{thmintrotrivialcentralizer} and \ref{thmintrototalergodicity} yield the following corollary.

\begin{corollary}
Let $T$ be a canonically bounded rank-one transformation with the canonical cutting and spacer parameters $(q_n)$ and $(a_{n,i})$.  Then the following are equivalent:
\begin{enumerate}
\item  $T$ has minimal self-joinings of all orders.
\item  $T$ is weak mixing.
\item  $T$ is totally ergodic.
\item  For every $d>1$ and $N \in \N$, there are $n \geq N$ and $0 < i < q_n$ so that $d$ does not divide $h_N + a_{n,i}$.
\end{enumerate}
\end{corollary}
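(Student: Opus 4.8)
The plan is to assemble the corollary from the three ingredients already in hand: the characterization of trivial centralizer in Theorem~\ref{thmintrotrivialcentralizer}, the characterization of total ergodicity in Theorem~\ref{thmintrototalergodicity}, and Ryzhikov's theorem \cite{Ryzhikov} that a bounded rank-one transformation has MSJ if and only if it is both totally ergodic and has trivial centralizer. I would first dispatch the two implications that hold for arbitrary measure-preserving transformations and need no special structure: MSJ implies weak mixing, giving $(1) \Rightarrow (2)$, and weak mixing implies total ergodicity, giving $(2) \Rightarrow (3)$. These are classical and need only be cited.

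The substance lies in closing the loop with $(3) \Rightarrow (1)$. Here I would invoke the standing hypothesis that $T$ is canonically bounded. Since $(q_n)$ and $(a_{n,i})$ are the canonical cutting and spacer parameters and these are bounded, $T$ is in particular bounded, so Ryzhikov's theorem is applicable. Moreover, because the operation of passing to canonical parameters is idempotent, the canonical parameters of $(q_n),(a_{n,i})$ are again $(q_n),(a_{n,i})$ and hence bounded; thus $T$ is canonically bounded with respect to these parameters, and Theorem~\ref{thmintrotrivialcentralizer} yields that $T$ has trivial centralizer. Assuming $(3)$, the transformation is then simultaneously totally ergodic and of trivial centralizer, so Ryzhikov's theorem delivers $(1)$. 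Together with the two classical implications, this shows that $(1)$, $(2)$, and $(3)$ are equivalent.

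Finally I would establish $(3) \Leftrightarrow (4)$ by a direct appeal to Theorem~\ref{thmintrototalergodicity}. The hypothesis of that theorem is that the cutting parameter is bounded, which holds here precisely because $T$ is canonically bounded, so that its canonical cutting parameter $(q_n)$ is bounded. Condition $(4)$ is verbatim the divisibility condition appearing in that theorem applied to $(q_n),(a_{n,i})$, so $(3) \Leftrightarrow (4)$ is immediate, completing the chain of equivalences.

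I do not anticipate a genuine mathematical obstacle, since all of the hard analysis is packaged inside the cited results; the one point demanding care is the bookkeeping of hypotheses, namely verifying that the canonical parameters really do meet the boundedness requirements of both Theorem~\ref{thmintrotrivialcentralizer} and Theorem~\ref{thmintrototalergodicity}. The idempotence of the canonicalization operation is what legitimizes applying Theorem~\ref{thmintrotrivialcentralizer} to the canonical parameters, and this is the step I would state most explicitly to avoid a circular or ill-posed application.
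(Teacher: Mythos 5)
Your proposal is correct and follows exactly the route the paper intends: the classical implications MSJ $\Rightarrow$ weak mixing $\Rightarrow$ total ergodicity, Theorem~\ref{thmintrotrivialcentralizer} to get trivial centralizer from canonical boundedness, Ryzhikov's theorem to close the loop, and Theorem~\ref{thmintrototalergodicity} for $(3)\Leftrightarrow(4)$. The paper gives no further detail than this, so there is nothing to add beyond your (correct) bookkeeping of the boundedness hypotheses.
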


The rest of the paper is organized as follows. In Section 2 we review the constructive geometric (cutting and stacking) definition and the constructive symbolic definition of rank-one transformations, and discuss canonical generating sequences.  We give the preliminary facts that are needed for the proofs of our main results and show that Theorems \ref{thmintrotrivialcentralizer} and \ref{thmintrotrivialcentralizer2} are equivalent. In Section 3 we give the proof of Theorem \ref{thmintrotrivialcentralizer}. In Section 4 we prove Theorem \ref{thmintrototalergodicity}. In the last section we make some brief concluding remarks.

\section{Preliminaries}\label{sectionpreliminaries}

In this section we review the preliminaries about rank-one transformations. Unexplained definitions and proofs can be found in the authors' earlier work \cite{GaoHill}. We have tried to isolate all the needed definitions and results so that the rest of this paper is as self-contained as possible.

There are several standard definitions for rank-one transformations (cf. \cite{Ferenczi} for a more comprehensive list of these definitions).  In this paper we will use two most common definitions for rank-one transformations: the constructive geometric definition and the constructive symbolic definition.  

The following is the constructive geometric definition for a transformation on the unit interval $[0,1]$ to be rank-one. The definition describes a recursive {\em cutting and stacking} process that produces infinitely many Rokhlin towers to approximate the transformation.

\begin{definition}\label{defgeo} A measure preserving transformation $T$ on $[0,1]$ is {\em rank-one} if there exist sequences of positive integers $q_n>1$, $n \in\mathbb{N}$, and nonnegative integers $a_{n,i}$, $n \in \mathbb{N}$, $0< i< q_n$, such that, if $h_n$ is defined by
$$h_0 = 1, h_{n+1} = q_nh_n + \sum_{0<i<q_n} a_{n,i},$$
then
$$ \sum^{+\infty}_{n=0}\displaystyle\frac{h_{n+1}-q_n h_n}{h_{n+1}}<+\infty,$$
and subsets of $[0,1]$, denoted by $B_n$, $n\in\mathbb{N}$, by $B_{n,i}$, $n\in\mathbb{N}$, $0< i\leq q_n$, and by $C_{n,i,j}$, $n\in\mathbb{N}$, $0< i< q_n$, $0< j \leq a_{n,i}$, (if $a_{n,i} = 0$ then there are no $C_{n,i,j}$), such that for all $n$
\begin{itemize}
\item $\{B_{n,i}\mid 0< i\leq q_n\}$ is a partition of $B_n$,
\item the $T^k(B_n)$, $0\leq k< h_n$, are disjoint, 
\item $T^{h_n}(B_{n, i}) = C_{n,i,1}$ if $a_{n,i}\neq 0$ and $i < q_n$,
\item $T^{h_n}(B_{n,i}) = B_{n,i+1}$ if $a_{n,i} = 0$ and $i < q_n$,
\item $T(C_{n,i,j}) = C_{n,i,j+1}$ if $j < a_{n,i}$,
\item $T(C_{n,i,a_{n,i}}) = B_{n,i+1}$ if $i < q_n$,
\item $B_{n+1} = B_{n,1}$,
\end{itemize}
and the collection $\bigcup_{n=0}^\infty \{B_n, T(B_n), \dots, T^{h_n-1}(B_n)\}$ is dense in the measure algebra of all measurable subsets of $[0,1]$.
\end{definition}

In this definition the sequence $(q_n)$ is called the {\em cutting parameter}, the sets $C_{n,i,j}$ are called the {\em spacers}, and the doubly-indexed sequence $(a_{n,i})$ is called the {\em spacer parameter}. For each $n$, the collection $\{B_n, T(B_n),\dots, T^{h_n-1}(B_n)\}$ gives the {\it stage-$n$ tower}, with $B_n$ as the {\it base} of the tower, and each $T^k(B_n)$, where $0\leq k<h_n$, a {\it level} of the tower. The stage-$n$ tower has {\it height} $h_n$. At stage $n+1$, the stage-$n$ tower is cut into $q_n$ many {\it $n$-blocks} of equal measure. Each block has a base $B_{n,i}$ for some $0<i\leq q_n$ and has height $h_n$. These $n$-blocks are then stacked up, with spacers inserted in between. At future stages, these $n$-blocks are further cut into thinner blocks, but they always have height $h_n$. When we work with a rank-one transformation we will use both the terminology and the notation in this paragraph.

A general measure preserving transformation (on a general Lebesgue space) is {\em rank-one} if it is isomorphic to a constructive rank-one transformation on $[0,1]$ as defined above.  Two rank-one transformations are isomorphic if and only if they can be described by the same cutting and spacer parameters. However, different cutting and spacer parameters can describe the same rank-one transformation. 

We next turn to the constructive symbolic definition of rank-one systems. We will be talking about finite words over the alphabet $\{0,1\}$. For any finite word $\alpha$ we let $|\alpha|$ denote the length of $\alpha$. Let $\mathcal{F}$ be the set of all finite words over the alphabet $\{0,1\}$ that start and end with $0$. A {\em generating sequence} is an infinite sequence $(v_n)$ of finite words in $\mathcal{F}$ defined by induction on $n\in\mathbb{N}$:
$$ v_0=0,\ \ v_{n+1}=v_n1^{a_{n,1}}v_n1^{a_{n,2}}\dots v_n1^{a_{n,q_n-1}}v_n $$
for some positive integers $q_n> 2$ and nonnegative integers $a_{n,i}$ for $0< i< q_n$. We continue to refer to the sequence $(q_n)$ as the {\em cutting parameter} and the doubly-indexed sequence $(a_{n,i})$ as the {\em spacer parameter}. Note that the cutting and spacer parameters uniquely determine a generating sequence. 

A generating sequence converges to an infinite {\it rank-one word} $V\in\{0,1\}^{\mathbb N}$. We write $V=\lim_{n\to\infty} v_n$. Alternatively, an infinite word $V\in\{0,1\}^\mathbb{N}$ is a rank-one word if there is a generating sequence $(v_n)$ such that $V\upharpoonright |v_n|=v_n$ for all $n\in\mathbb{N}$. A pair of cutting and spacer parameters is said to be {\it trivial} if the infinite rank-one word induced is periodic; otherwise it is {\it nontrivial}. 

\begin{definition}
Given an infinite rank-one word $V$, the {\em (symbolic) rank-one (topological dynamical) system} induced by $V$ is a pair $(X,\sigma)$, where
$$X=X_V=\{ x\in\{0,1\}^\mathbb{Z}\,:\, \mbox{every finite subword of $x$ is a subword of $V$}\}$$
and $\sigma: X\to X$ is the {\em shift map} defined by
$$ \sigma(x)(k)=x(k+1) \mbox{ for all $k\in\mathbb{Z}$.} $$
\end{definition}

Trivial cutting and spacer parameters induce periodic rank-one words, and they in turn give rise to finite rank-one systems; such systems are said to be {\it degenerate}. On the other hand, if the cutting and spacer parameters are nontrivial, then the infinite rank-one word induced is aperiodic, and the induced rank-one system is a Cantor system (in particular uncountable); we call such systems {\it nondegenerate}. 

If $X$ is a rank one system, $\alpha$ is a finite word and $k\in\mathbb{Z}$, then
$$ U_{\alpha,k}=\{ x\in X\,:\, \mbox{$x$ has an occurrence of $\alpha$ (starting) at position $k$}\} $$
is a basic open set of $X$. For any nondegenerate rank one system $X$ there is an atomless shift-invariant (possibly infinite) measure $\mu_0$ on $X$ defined by
$$ \mu_0(U_{\alpha,k})=\lim_{n\to\infty}\displaystyle\frac{\mbox{the number of occurrences of $\alpha$ in $v_n$}}{\mbox{the number of occurrences of $0$ in $v_n$}}, $$
where $(v_n)$ is any generating sequence for $X$. It can be shown that $\mu_0$ depends only on $V$ and does not depend on the choice of the generating sequence. In fact,
$\mu_0$ is the unique shift-invariant measure on $X$ with $\mu_0(U_{0,0})=1$. If $\mu_0$ is finite (equivalently, $\mu_0(U_{1,0})$ is finite), then its normalization $\mu$ is given by
$$ \mu(U_{\alpha,k})=\lim_{n\to\infty}\displaystyle\frac{\mbox{the number of occurrences of $\alpha$ in $v_n$}}{|v_n|}, $$
where $(v_n)$ is any generating sequence for $X$. Again it can be shown that the definition depends only on $V$ and not on the particular generating sequence $(v_n)$. We denote by $\mathcal{R}^*$ the set of all aperiodic rank-one words $V$ for which the induced $\mu_0$ is finite (and therefore $\mu$ is defined).
For $V\in\mathcal{R}^*$, $\mu$ is the unique shift-invariant, atomless, probability Borel measure on $X$ (and is therefore ergodic). We summarize this in the following definition of symbolic rank one measure preserving systems.

\begin{definition}\label{rankonesymbolic} A {\em (symbolic) rank one (measure preserving) system} is a triple $(X,\mu,\sigma)$ such that $(X,\sigma)$ is a nondegenerate rank one topological dynamical system, and $\mu$ is the unique shift-invariant, atomless, probability Borel measure on $X$, provided that
$$ \lim_{n\to\infty}\displaystyle\frac{\mbox{the number of occurrences of $1$ in $v_n$}}{\mbox{the number of occurrences of $0$ in $v_n$}}<+\infty $$
for any generating sequence $(v_n)$ of $X$.
\end{definition}

In the rest of this paper we will work with both geometric rank-one transformations and symbolic rank-one systems. For this it is important to note the following basic fact: if the cutting and spacer parameters are nontrivial, then the two finiteness conditions in Definitions~\ref{defgeo} and \ref{rankonesymbolic} are equivalent, and the geometric rank-one transformation and the symbolic rank-one system given by the same parameters are isomorphic. When the parameters are trivial, the geometric rank-one transformation is an odometer map and the symbolic rank-one system is degenerate, and therefore they are not isomorphic. 

It has been proved in \cite{GaoHill} (Proposition 2.36) that for every nondegenerate rank-one system $X$ there is a unique infinite rank-one word $V$ (necessarily aperiodic) with $X=X_V$. Thus if two pairs of cutting and spacer parameters describe the same symbolic rank-one system, their corresponding generating sequences must converge to the same rank-one word. This is not true for geometric rank-one transformations. Any geometric rank-one transformation can be described by two pairs of cutting and spacer parameters corresponding to different rank-one words.

Canonical generating sequences and canonical cutting and spacer parameters are defined in the symbolic context. To define canonicity we need to recall more concepts and results from \cite{GaoHill}. 

Recall that $\mathcal{F}$ is the set of all finite words over the alphabet $\{0,1\}$ that start and end with $0$. If $u, v\in \mathcal{F}$, we say that $u$ is {\em built from} $v$, and denote $v\prec u$, if there is a positive integer $q>1$ and nonnegative integers $a_1,\dots, a_{q-1}$ such that
$$ u=v1^{a_1}v\dots v1^{a_{q-1}}v. $$
Moreover, we say that $u$ is {\em built simply from} $v$, and denote $v\prec_s u$, if $$a_1=\dots=a_{q-1}.$$
We write $u\preceq v$ if $u\prec v$ or $u=v$, and $u \preceq_s v$ if $u\prec_s v$ or $u=v$.
If $V$ is an infinite rank-one word and $v\in\mathcal{F}$, we say that $V$ is {\em built from} $v$ if
there are nonnegative integers $a_1,\dots, a_n, \dots$ such that
$$ V=v1^{a_1}v\dots v1^{a_n}v\dots. $$
With this notation, $(v_n)$ is a generating sequence iff $v_0=0$ and $v_n\prec v_{n+1}$ for all $n\in\mathbb{N}$. 

\begin{definition}\label{definition:canonical} An infinite generating sequence $(v_n)$ is {\em canonical} if it enumerates, in increasing order of length, all finite words $v$ from which $V=\lim_{n\to\infty} v_n$ is built, with the property that there are no $u, w\in\mathcal{F}$ such that
\begin{itemize}
\item[(i)] $V$ is built  from both $u$ and $w$,
\item[(ii)] $u\prec v\prec w$, and
\item[(iii)] $u\prec_s w$.
\end{itemize}
A pair of cutting and spacer parameters is {\em canonical} if the generating sequence determined by the parameters is canonical.
\end{definition}

One of the main results of \cite{GaoHill} is the following theorem.

\begin{theorem}[\cite{GaoHill} Proposition 2.15] Every aperiodic infinite rank-one word has a unique canonical generating sequence.
\end{theorem}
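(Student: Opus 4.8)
The plan is to analyze the set $\mathcal{B}(V)$ of all finite words from which $V$ is built, partially ordered by $\prec$, and to show that the canonical words form a distinguished cofinal chain inside it. The first task is to record the order-theoretic backbone. I would establish (or cite from \cite{GaoHill}) the \emph{comparability} of building words: if $u, w \in \mathcal{B}(V)$, then $u$ and $w$ are $\prec$-comparable, with the shorter one building the longer. This is the essential rigidity of rank-one words — the occurrences of a building block in $V$ recur at synchronized positions, forcing one block to refine the other — and it is the step I expect to be the main obstacle; everything downstream is bookkeeping on top of it. Two consequences are immediate: at most one word of each length lies in $\mathcal{B}(V)$ (so ``increasing order of length'' really produces a sequence), and $\mathcal{B}(V)$ forms a single chain $0 = u_0 \prec u_1 \prec u_2 \prec \cdots$ which, because $V$ is infinite, is cofinal (contains words of unbounded length). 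I would also note the routine facts that $\prec$ is transitive and that a word built from $u$ has a \emph{unique} decomposition into $u$-blocks separated by runs of $1$'s, so that its sequence of spacer exponents relative to $u$ is well defined.

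Next I would pin down exactly which words are canonical. Using unique decomposition I would prove the key structural lemma: if $u \prec_s w$ with common spacer exponent $a$ and $u \prec v \prec w$ with $v \in \mathcal{B}(V)$, then in fact $u \prec_s v$ and $v \prec_s w$, again with exponent $a$ (the intermediate block inherits simplicity). Combined with the elementary observation that composing two simple buildings of the same exponent yields a simple building of that exponent, this shows that for $u_i, u_j \in \mathcal{B}(V)$ with $i<j$ we have $u_i \prec_s u_j$ iff every consecutive step $u_\ell \prec u_{\ell+1}$ for $i \le \ell < j$ is simple with one common exponent. Translating the definition of canonicity through this equivalence, a word $u_k$ fails to be canonical precisely when its incoming step $u_{k-1}\prec u_k$ and its outgoing step $u_k \prec u_{k+1}$ are both simple with the same exponent. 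Thus the canonical words are exactly the \emph{breakpoints} of the chain: $u_0$, together with each $u_k$ at which the pattern of maximal constant-exponent simple runs changes.

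For existence it then suffices to see that there are infinitely many breakpoints and that they assemble into a generating sequence. Here aperiodicity enters decisively: if there were only finitely many breakpoints, then beyond the last one every step would be simple with a single fixed exponent $a$, whence $V = u_K 1^a u_K 1^a u_K \cdots$ would be periodic, contradicting aperiodicity. So the breakpoints form an infinite, cofinal subchain $v_0 = 0 \prec v_1 \prec v_2 \prec \cdots$; since $v_0 = 0$ and consecutive terms satisfy $v_n \prec v_{n+1}$ by comparability, this is a generating sequence, and by cofinality $\lim_n v_n = V$. By construction it enumerates, in increasing length, precisely the canonical words, so it is a canonical generating sequence.

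Finally, uniqueness is essentially formal once existence is in hand. The property defining a canonical word depends only on $V$, so the set $\mathcal{C}(V)$ of canonical building words is determined by $V$; since no two distinct building words share a length, the increasing-length enumeration of $\mathcal{C}(V)$ is a uniquely determined sequence. Any canonical generating sequence must be exactly this enumeration, so there is only one.
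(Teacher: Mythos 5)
Your proof rests on a claim that is false: that any two words from which $V$ is built are $\prec$-comparable, so that $\mathcal{B}(V)$ is a chain. It is not. The set $A_V$ of building words of an aperiodic rank-one word is in general only a lattice, not a total order --- this is exactly the content of Proposition 2.9 of \cite{GaoHill}, which the present paper quotes, and it is why the paper needs a separate comparability lemma (Lemma \ref{lemma:comparable}) asserting comparability only for elements of the \emph{canonical} sequence, and why the proof of Lemma \ref{lemma:left} must explicitly treat the case of incomparable building words. A concrete counterexample: take $V=\lim v_n$ with $v_0=0^6$ and $v_{n+1}=v_nv_n1v_n$. Then $V$ is aperiodic, and both $00$ and $000$ build $V$ (since $0^6$ is built from each with zero spacers), yet neither builds the other. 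Synchronization of occurrences at position $0$ forces every building word to be a prefix of $V$ (which does give your ``one word per length'' observation), but it does not force one block to refine the other.

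Since the chain structure fails, your characterization of the canonical words as ``breakpoints'' of the chain, and with it the existence and uniqueness argument built on it, does not go through as stated: a word can fail to be canonical not only because it sits strictly inside a maximal constant-exponent simple run, but also because it is incomparable with some other building word (in which case $u\wedge v\prec_s u\vee v$, as in Lemma \ref{lemma:simplybuilt2}, produces the disqualifying sandwich). The correct route --- and the one \cite{GaoHill} takes --- is to first establish the lattice structure and the fact $(u\wedge v)\prec_s(u\vee v)$ for incomparable $u,v$, deduce from this that canonical words are comparable with everything, and only then run the kind of breakpoint/cofinality analysis you describe. Your treatment of the remaining steps (the inheritance of simplicity by intermediate words, the use of aperiodicity to rule out a final infinite simple run with constant exponent, and the formal uniqueness argument) is sound, but the step you yourself flag as the main obstacle is exactly where the argument breaks, and it cannot be repaired by comparability alone.
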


It follows that every nondegenerate symbolic rank-one system has a unique pair of canonical cutting and spacer parameters. For geometric rank-one transformations, this can be translated to the following. If a geometric rank-one transformation is described by a pair of nontrivial cutting and spacer parameters, then there is a unique pair of canonical cutting and spacer parameters for the given parameters. In general, geometric rank-one transformations do not correspond uniquely to infinite rank-one words, and therefore there might be more than one pair of canonical cutting and spacer parameters which can describe the same transformation.

In principle, Definition \ref{definition:canonical} has given an algorithm to calculate the canonical generating sequence given any infinite rank-one word. The algorithm produces an infinite generating sequence only when the infinite rank-one word is aperiodic, in which case the canonical generating sequence produced converge to the same rank-one word. It follows that, given any nontrivial cutting and spacer parameters, the canonical cutting and spacer parameters can also be calculated. 

The following are some key properties about the relations $\preceq$ and $\preceq_s$ and about the canonical generating sequence.  

For the following discussions fix an infinite rank-one word $V$ and consider the set $A_V$ of all finite words $v\in\mathcal{F}$ from which $V$ is built. For $u,v\in\mathcal{F}$ we say that $u$ and $v$ are {\it comparable} if either $u\prec v$, or $u=v$, or $v\prec u$; otherwise $u$ and $v$ are said to be {\it incomparable}. It was shown in \cite{GaoHill} Proposition 2.9 that $(A_V,\preceq)$ is a lattice. Thus given $u,v\in A_V$ it make sense to speak of $u\wedge v$, the greatest lower bound of $u$ and $v$, and $u\vee v$, the least upper bound of $u$ and $v$. We also have the following results.

\begin{lemma}[\cite{GaoHill} Proposition 2.13]\label{lemma:comparable} If $v\in\mathcal{F}$ is an element of the canonical generating sequence for $V$ and $u\in A_V$, then $u$ and $v$ are comparable.
\end{lemma}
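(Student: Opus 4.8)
The plan is to argue by contradiction using the lattice structure of $(A_V,\preceq)$ together with the defining property of the canonical sequence. Suppose $v$ is an element of the canonical generating sequence but some $u\in A_V$ is incomparable to $v$. By Proposition 2.9 (the lattice result quoted above) the meet $m:=u\wedge v$ and the join $M:=u\vee v$ exist in $A_V$. Incomparability forces every inclusion to be strict: if $m=v$ then $v\preceq u$, and if $M=v$ then $u\preceq v$, either way contradicting incomparability, so $m\prec v\prec M$ (and likewise $m\prec u\prec M$), with $V$ built from both $m$ and $M$. Thus $v$ sits strictly inside the diamond $m\prec u,v\prec M$ of $A_V$.

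Next I would unwind Definition~\ref{definition:canonical}: the statement ``$v$ is canonical'' is exactly the statement that there is no pair $u',w'\in\mathcal{F}$ with $V$ built from both, $u'\prec v\prec w'$, and $u'\prec_s w'$. So to reach a contradiction it suffices to produce a single simple pair $u'\prec_s w'$ in $A_V$ that straddles $v$, i.e. with $u'\prec v\prec w'$. A useful normal form: any such straddling simple pair in fact refines to $u'\prec_s v\prec_s w'$ with a common spacer value, since $v$, being a prefix of the uniformly built word $w'=u'1^{a}u'\cdots u'$ and built from $u'$, must itself be an initial run $u'1^{a}u'\cdots u'$ of the same pattern. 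So the goal is to locate a uniform-spacer (simple) segment around $v$.

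To build this segment I would pass to the occurrence picture. Every $x\in A_V$ is an initial segment of $V$ (as $V=x1^{*}x\cdots$), so of the two words $u,v$ the shorter is a prefix of the longer; assume $u$ is a proper prefix of $v$ (the other case is symmetric, reading $u$ through its $v$-parsing). Consider the structural parsing of $V$ into $u$-blocks and spacers, with block-start positions $S_u$; under the order-reversing correspondence $x\mapsto S_x$ one has $x\preceq y\iff S_y\subseteq S_x$, and $x\prec_s y$ corresponds to the $x$-blocks inside a $y$-block being evenly spaced. Because $u\not\prec v$ we have $|v|\notin S_u$, while the last letter $0$ of $v$ forces the position $|v|-1$ to lie inside a $u$-block; hence $v$ overhangs a $u$-block boundary. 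The coexistence of $u$ and $v$ in $A_V$ rigidly constrains how the $u$-pattern and the $v$-pattern of $V$ interleave around this boundary, and the plan is to show that this interleaving is possible only if the blocks in a neighborhood of the overhang are evenly spaced, which exhibits the desired $u'\prec_s w'$ in $A_V$ with $u'\prec v\prec w'$.

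The main obstacle is exactly this last construction: converting ``$u$ and $v$ are incomparable yet both lie in $A_V$'' into ``there is a genuine simple (uniform-spacer) pair of $A_V$-words straddling $v$.'' Two points make this delicate. First, aperiodicity of $V$ rules out the obvious candidate $(m,M)$: if $m\prec_s M$ then $V$ would be built from $M$ with a single repeated spacer and hence be periodic, so the straddling simple pair must be found locally rather than as the meet and join themselves. Second, the argument must use the full force of both $u\in A_V$ and $v\in A_V$ simultaneously — a single membership does not pin down the spacer pattern enough — so the heart of the proof is a rigidity lemma describing which finite interleavings of two distinct structural parsings of the same aperiodic $V$ can occur, from which the evenly-spaced segment is extracted.
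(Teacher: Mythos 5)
The paper does not reprove this lemma (it is quoted from \cite{GaoHill}, Proposition 2.13), so your attempt has to stand on its own, and as written it has a genuine gap caused by a false side-claim. You correctly reduce to: assuming $u$ and $v$ are incomparable, the meet $m=u\wedge v$ and join $M=u\vee v$ lie in $A_V$ and satisfy $m\prec v\prec M$ strictly, and it suffices to exhibit a pair $u'\prec v\prec w'$ in $A_V$ with $u'\prec_s w'$ to contradict Definition~\ref{definition:canonical}. But you then discard the candidate $(m,M)$ on the grounds that ``if $m\prec_s M$ then $V$ would be built from $M$ with a single repeated spacer and hence be periodic.'' That is not what $m\prec_s M$ means: it is a statement about the \emph{finite} word $M$ being built from $m$ with equal spacers, and says nothing about the spacers with which the infinite word $V$ is built from $M$. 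No periodicity of $V$ follows. In fact Lemma~\ref{lemma:simplybuilt2} gives $(u\wedge v)\preceq_s(u\vee v)$ outright, and since incomparability forces $m\neq M$, this is $m\prec_s M$; together with $m\prec v\prec M$ and $m,M\in A_V$ this is exactly the forbidden configuration of Definition~\ref{definition:canonical}, and the proof ends there. (The paper itself uses this very maneuver inside the proof of Lemma~\ref{lemma:left}, where it invokes $(w\wedge w')\prec_s(w\vee w')$ for incomparable $w,w'$.)

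Because you talked yourself out of the correct two-line finish, you replace it with a ``rigidity lemma'' about interleavings of the $u$-parsing and $v$-parsing of $V$ near an overhanging block boundary. That step is only described, not proved, and it is where all the difficulty would live if one really had to go that route; as it stands the argument is incomplete. The fix is simply to delete the periodicity objection and conclude directly from Lemma~\ref{lemma:simplybuilt2}.
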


\begin{lemma}[\cite{GaoHill} Lemma 2.7]\label{lemma:simplybuilt} {\rm (a)} Suppose $u, v, u', v'\in \mathcal{F}$ and $u\prec v\prec u'\prec v'$. If $u\prec_s u'$ and $v\prec_s v'$, then $u\prec_s v'$.

{\rm (b)} Suppose $u, v, w\in\mathcal{F}$ and $u\prec v\prec w$. If $u\prec_s w$, then $u\prec_s v$ and $v\prec_s w$.
\end{lemma}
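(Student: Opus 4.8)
The plan is to reduce both parts to a single structural fact about how spacer data compose along a chain. For $x \prec y$, write $y = x 1^{a_1} x 1^{a_2} x \cdots x 1^{a_{q-1}} x$ and call $(a_1,\dots,a_{q-1})$ the \emph{spacer sequence} of $y$ over $x$; note that $x \prec_s y$ means exactly that this sequence is constant. First I would verify that the spacer sequence is \emph{well defined}, i.e.\ that the decomposition of $y$ as a word built from $x$ is unique. This follows from a greedy left-to-right parse: since every word in $\mathcal{F}$ (in particular $x$) begins and ends with $0$, any decomposition must place a copy of $x$ at positions $0,\dots,|x|-1$, after which $a_1$ is forced to equal the length of the maximal run of $1$'s beginning at position $|x|$ (this run terminates at the $0$ that starts the next copy of $x$, or at the end of $y$); the next copy of $x$ then begins at a determined position, and the claim follows by induction.

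Next I would record the \emph{composition identity}. If $x \prec y \prec z$, with spacer sequence $\sigma = (a_1,\dots,a_{s-1})$ of $y$ over $x$ and spacer sequence $(b_1,\dots,b_{r-1})$ of $z$ over $y$, then substituting $y = x 1^{a_1} x \cdots x$ into $z = y 1^{b_1} y \cdots y$ exhibits a decomposition of $z$ over $x$, which by uniqueness is \emph{the} spacer sequence of $z$ over $x$, namely the concatenation
\[ \sigma \frown (b_1) \frown \sigma \frown (b_2) \frown \cdots \frown (b_{r-1}) \frown \sigma. \]
The only property I will use is the immediate observation that such a concatenation is constant, equal to some value $c$, if and only if every entry of $\sigma$ equals $c$ and every $b_j$ equals $c$; since $r \geq 2$ and $s \geq 2$, both $\sigma$ and the list $(b_j)$ are nonempty.

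For part (b) this finishes things at once: applying the composition identity to $u \prec v \prec w$, the spacer sequence of $w$ over $u$ is a concatenation of copies of the spacer sequence of $v$ over $u$ interspersed with the entries of the spacer sequence of $w$ over $v$. Since $u \prec_s w$ makes the former constant, both the spacer sequence of $v$ over $u$ and that of $w$ over $v$ are constant, i.e.\ $u \prec_s v$ and $v \prec_s w$.

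For part (a) I would run the identity along the two overlapping chains in the hypothesis $u \prec v \prec u' \prec v'$. Applying it to $u \prec v \prec u'$ and using $u \prec_s u'$ (whose constant value I call $a$) shows that the spacer sequences of $v$ over $u$ and of $u'$ over $v$ are both $\equiv a$. Applying it to $v \prec u' \prec v'$ and using $v \prec_s v'$ (constant value $b$) shows the spacer sequence of $u'$ over $v$ is $\equiv b$ and that of $v'$ over $u'$ is $\equiv b$; here is the crux: the spacer sequence of $u'$ over $v$ is \emph{shared} by the two chains and is nonempty, so comparing forces $a = b$. Finally, applying the identity to $u \prec u' \prec v'$, with the spacer sequence of $u'$ over $u$ equal to $a$ (this is just $u \prec_s u'$) and that of $v'$ over $u'$ equal to $b = a$, I conclude that the spacer sequence of $v'$ over $u$ is constant, i.e.\ $u \prec_s v'$. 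The only genuinely delicate point is the uniqueness of the decomposition in the first step; once that is in hand, everything reduces to the bookkeeping above.
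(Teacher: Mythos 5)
Your argument is correct and complete. Note that the paper itself gives no proof of this lemma --- it is quoted verbatim from \cite{GaoHill} (Lemma 2.7) --- so there is no in-paper proof to compare against; your route, via unique readability of the decomposition of $y$ over $x$ (the greedy left-to-right parse, which is valid because every word of $\mathcal{F}$ begins with $0$, so the first $0$ at or after position $|x|$ pins down $a_1$) followed by the composition identity for spacer sequences along a chain $x \prec y \prec z$, is the natural one and is essentially the standard argument for this fact. Both parts then reduce correctly to the observation that the interleaved concatenation is constant iff all entries of $\sigma$ and all $b_j$ are equal, and your derivation of $a=b$ in part (a) from the shared, nonempty spacer sequence of $u'$ over $v$ is exactly the right pivot.
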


\begin{lemma}[\cite{GaoHill} Lemma 2.10]\label{lemma:simplybuilt2} If $u, v\in A_V$, then $(u\wedge v)\preceq u \preceq (u\vee v)$, $(u\wedge v)\preceq v \preceq (u\vee v)$, and $(u\wedge v)\preceq_s (u\vee v)$. 
\end{lemma}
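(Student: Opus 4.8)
The plan is to dispatch the two $\preceq$-chains in a single line and to put all the work into the simple-build assertion $(u\wedge v)\preceq_s(u\vee v)$. The inequalities $(u\wedge v)\preceq u\preceq(u\vee v)$ and $(u\wedge v)\preceq v\preceq(u\vee v)$ are nothing more than the defining properties of the meet and the join in the lattice $(A_V,\preceq)$ of \cite{GaoHill}: $u\wedge v$ is a lower bound and $u\vee v$ an upper bound for $\{u,v\}$. So I would set $m=u\wedge v$ and $M=u\vee v$ and turn to showing $m\prec_s M$. I would focus on the case where $u$ and $v$ are incomparable, so that $m\prec u$, $m\prec v$, $u\prec M$, and $v\prec M$ all hold strictly; this is where the meet and join carry real content.

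The main idea is to read everything off the expansion of $V$ over $m$. Since $m\in A_V$, fix the spacers with $V=m1^{e_1}m1^{e_2}m\cdots$. I would first observe that any $w\in A_V$ with $m\preceq w$ is a prefix of $V$ ending at a complete copy of $m$, hence is the truncation $m1^{e_1}m\cdots m1^{e_{k-1}}m$ for some $k$, and that membership $w\in A_V$ forces the sequence $(e_i)$ to be unchanged under the shift by $k$ except at the positions that are multiples of $k$ (the separators between successive copies of $w$). In this way the elements of $A_V$ above $m$ are encoded by their \emph{block-lengths} $k$, with $\preceq$ reflecting divisibility and, via the lattice structure, $\wedge$ and $\vee$ reflecting $\gcd$ and $\mathrm{lcm}$. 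Because $m$ is the meet, the block-lengths $k_u,k_v$ of $u,v$ are coprime, so $M$ has block-length $k_uk_v$. I would then run a Chinese-remainder argument: as $i$ runs over $1,\dots,k_uk_v-1$, each $e_i$ is pinned down by $k_u$-invariance or by $k_v$-invariance, and coprimality identifies all of these invariance classes with one another, forcing $e_1=\dots=e_{k_uk_v-1}$. That is exactly the statement that $M=m1^{a}m\cdots m$ for a single exponent $a$, i.e.\ $m\prec_s M$. Lemma~\ref{lemma:simplybuilt} is available for propagating simple-build relations along chains of words and I would use it to streamline the bookkeeping, but the heart of the matter is the counting argument that makes the spacers of $M$ over $m$ constant.

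The step I expect to be the main obstacle is making this encoding precise and genuinely lattice-theoretic. Two points need care. First, I must exclude misaligned prefixes: a priori some $w\in A_V$ could end at an interior $0$ of an $m$-copy, and ruling this out (so that the block-length is well defined and matches the expansion of $V$) should come from aperiodicity of $V$, i.e.\ nontriviality of the parameters. Second, the passage from ``$k_u,k_v$ coprime'' to ``all spacers equal'' hinges on the two facts that $(i\bmod k_u,\,i\bmod k_v)$ exhausts every nonzero residue pair as $i$ ranges over $1,\dots,k_uk_v-1$, and that the separator positions, which are multiples of exactly one of $k_u,k_v$, are also forced to the common value; pinning down these endpoints is the delicate part of the chaining. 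Once uniformity of the spacers of $M$ over $m$ is in hand, $m\prec_s M$ is immediate and the lemma follows.
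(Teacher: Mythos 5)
First, a framing point: the paper does not prove this lemma — it is imported verbatim from \cite{GaoHill} (Lemma 2.10) — so there is no in-paper argument to compare yours against; I can only assess your proposal on its own terms. Your restriction to incomparable $u,v$ is not merely where "the real content" lies but is forced: as literally quoted the $\preceq_s$ clause fails for comparable pairs (for Chacon's words, $v_0\wedge v_1=v_0$, $v_0\vee v_1=v_1$, yet $v_1=v_01^0v_01^1v_0$ is not built simply from $v_0$), and the paper only ever invokes that clause for incomparable words (see the proof of Lemma~\ref{lemma:left}). So you should state explicitly that for comparable $u,v$ only the two trivial $\preceq$-chains are being claimed, and that the $\preceq_s$ assertion is proved under incomparability.

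On the substance, your engine — expand $V$ over $m=u\wedge v$, note that the spacer sequence $(e_i)$ depends only on $i\bmod k_u$ off multiples of $k_u$ and only on $i\bmod k_v$ off multiples of $k_v$, then chain residues — is sound, and the two delicate points you flag (alignment of the $u$- and $v$-decompositions with the $m$-decomposition, which follows from uniqueness of the decomposition of $V$ over $m$ by induction on occurrence positions, and the handling of separator indices) are genuine but fillable. The one real gap is the step ``because $m$ is the meet, $k_u$ and $k_v$ are coprime.'' This does not come for free from ``the lattice structure reflects divisibility'': to contradict $m=u\wedge v$ when $g=\gcd(k_u,k_v)>1$ you must \emph{exhibit} an element of $A_V$ with block length $g$ lying below both $u$ and $v$, and the natural candidate $w=m1^{e_1}m\cdots m1^{e_{g-1}}m$ is only seen to belong to $A_V$ and to satisfy $w\preceq u$, $w\preceq v$ after running your residue-chaining argument modulo $g$: for $i\equiv i'\not\equiv 0\pmod g$ one picks $i''$ with $i''\equiv i\pmod{k_u}$ and $i''\equiv i'\pmod{k_v}$ (possible precisely because $i\equiv i'\pmod g$), whence $e_i=e_{i''}=e_{i'}$. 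In other words, the gcd/lcm correspondence is a \emph{consequence} of the same CRT argument, not an input you may assume. Once that is supplied, the rest of your outline does go through: all $e_i$ with $i$ not divisible by $k_uk_v$ take a single value $a$, the word $M=m1^am\cdots 1^am$ with $k_uk_v$ blocks is the least upper bound of $u$ and $v$ in $A_V$, and $m\prec_s M$.
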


We prove the following lemmas to illustrate how to work with these notions.

\begin{lemma}\label{lemma:left}
Let $u, v, w\in A_V$ be such that $u\prec v\prec w$ and $u\prec_s w$. Suppose $u$ is the shortest word  with the properties $u\prec v\prec w$ and $u\prec_s w$. Then $u$ is an element of the canonical generating sequence.
\end{lemma}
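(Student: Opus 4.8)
\emph{Proof proposal.} The plan is to argue by contradiction after unwinding Definition~\ref{definition:canonical}. That definition tells us that $u$ fails to lie on the canonical generating sequence precisely when there exist $u',w'\in A_V$ with $u'\prec u\prec w'$ and $u'\prec_s w'$. So I would assume such $u',w'$ exist and contradict the minimality hypothesis on $u$. Since $u'\prec u\prec v\prec w$, transitivity of $\prec$ gives $u'\prec v\prec w$ for free, and because $u'\prec u$ forces $|u'|<|u|$, it suffices to prove $u'\prec_s w$: this would exhibit a strictly shorter word than $u$ with both defining properties, contradicting the choice of $u$. Thus the whole proof reduces to deriving $u'\prec_s w$ from the data $u'\prec u\prec v\prec w$, $u\prec_s w$, $u\prec w'$, and $u'\prec_s w'$.

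To obtain $u'\prec_s w$ I would split into cases according to the position of $w'$ relative to $w$. The three comparable cases are quick. If $w'=w$, then $u'\prec_s w$ is immediate from $u'\prec_s w'$. If $w\prec w'$, then $u'\prec w\prec w'$ together with $u'\prec_s w'$ gives $u'\prec_s w$ by part (b) of Lemma~\ref{lemma:simplybuilt}. If $w'\prec w$, then the chain $u'\prec u\prec w'\prec w$ satisfies $u'\prec_s w'$ and $u\prec_s w$, so part (a) of Lemma~\ref{lemma:simplybuilt} yields $u'\prec_s w$.

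The one genuinely delicate case, which I expect to be the main obstacle, is when $w$ and $w'$ are incomparable; here one cannot simply chain $\prec_s$-relations along $u'\prec u\prec w$, since $\prec_s$ is not transitive, so I would route the argument through the lattice $(A_V,\preceq)$. Set $m=w\wedge w'$ and $M=w\vee w'$. Since $u'$ is a common lower bound of $w$ and $w'$ we have $u'\preceq m$, and incomparability forces $m\prec w\prec M$ and $m\prec w'\prec M$ to be strict. By Lemma~\ref{lemma:simplybuilt2} we have $m\prec_s M$. I would first show $u'\prec_s M$: if $u'=m$ this is exactly $m\prec_s M$, while if $u'\prec m$ the chain $u'\prec m\prec w'\prec M$ has $u'\prec_s w'$ and $m\prec_s M$, so part (a) of Lemma~\ref{lemma:simplybuilt} gives $u'\prec_s M$. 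Finally, applying part (b) of Lemma~\ref{lemma:simplybuilt} to $u'\prec w\prec M$ together with $u'\prec_s M$ produces $u'\prec_s w$, completing every case.

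Having obtained $u'\prec_s w$ in all cases, $u'$ is a word strictly shorter than $u$ satisfying $u'\prec v\prec w$ and $u'\prec_s w$, contradicting the minimality of $u$. Hence no such $u',w'$ exist and $u$ lies on the canonical generating sequence. The points to watch are that each inequality feeding the two applications of Lemma~\ref{lemma:simplybuilt} is genuinely strict (which is exactly where incomparability of $w,w'$ is used), and that $u'$ and $w'$ indeed belong to $A_V$, so that the lattice operations and the cited lemmas apply.
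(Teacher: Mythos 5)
Your proposal is correct and follows essentially the same route as the paper's proof: contradiction via Definition~\ref{definition:canonical}, reduction to showing $u'\prec_s w$, a case split on whether $w$ and $w'$ are comparable, and in the incomparable case passing through $w\vee w'$ via Lemmas~\ref{lemma:simplybuilt} and~\ref{lemma:simplybuilt2}. The only cosmetic difference is that your subcase $u'=w\wedge w'$ is vacuous, since $u'\prec u\preceq w\wedge w'$ already forces the strict inequality the paper uses directly.
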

\begin{proof}
Assume $u$ is not an element of the canonical generating sequence. By Definition~\ref{definition:canonical} there are $u', w'\in A_V$ such that $u'\prec u\prec w'$ and $u'\prec_s w'$. 

If $w$ and $w'$ are comparable, we have either $w\preceq w'$ or $w'\prec w$. If $w\preceq w'$ then $u'\prec u\prec v\prec w\preceq w'$, and by Lemma \ref{lemma:simplybuilt} (b) we have $u'\prec_s w$, which contradicts the minimality assumption on the length of $u$. On the other hand, if $w'\prec w$, then we have $u'\prec u\prec w'\prec w$, where $u'\prec_s w'$ and $u\prec_s w$. In this case, by Lemma~\ref{lemma:simplybuilt} (a) we have $u'\prec_s w$, and thus $u'\prec u\prec v\prec w$ with $u'\prec_s w$, again contradicting the minimality assumption on the length of $u$.

We next assume that $w$ and $w'$ are not comparable. Then by Lemma~\ref{lemma:simplybuilt2} we have $u'\prec u\preceq (w\wedge w')\prec w'\prec (w\vee w')$ and  $(w\wedge w')\prec_s (w\vee w')$. Since $u'\prec_s w'$, by Lemma~\ref{lemma:simplybuilt} (a) we have $u'\prec_s (w\vee w')$. Thus we have $u'\prec u\prec v\prec w\prec (w\vee w')$ with $u'\prec_s (w\vee w')$. By Lemma \ref{lemma:simplybuilt} (b) we have $u'\prec_s w$, again contradicting the minimality of the length of $u$.

The proof of the lemma is complete.
\end{proof}

The following lemma is similar to Lemma \ref{lemma:left} and has a similar proof.

\begin{lemma}\label{lemma:right} Let $u, v, w\in A_V$ be such that $u\prec v\prec w$ and $u\prec_s w$. Suppose $w$ is the longest word with the properties $u\prec v\prec w$ and $u\prec_s w$. Then $w$ is an element of the canonical generating sequence.
\end{lemma}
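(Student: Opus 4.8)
The plan is to mirror the proof of Lemma~\ref{lemma:left}, interchanging the roles played by the ``shortest'' word $u$ and the ``longest'' word $w$, and correspondingly swapping the two words supplied by Definition~\ref{definition:canonical}. Concretely, I would argue by contradiction. Suppose $w$ is not an element of the canonical generating sequence. Since $w\in A_V$, Definition~\ref{definition:canonical} yields $u', w'\in A_V$ with $u'\prec w\prec w'$ and $u'\prec_s w'$. In particular $|w'|>|w|$, so if I can show $u\prec_s w'$, then $u\prec v\prec w'$ together with $u\prec_s w'$ contradicts the assumed maximality of the length of $w$ (the fixed witnesses $u$ and $v$ are reused, exactly as the fixed $v,w$ were reused in Lemma~\ref{lemma:left}). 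Thus the whole goal reduces to deriving $u\prec_s w'$ from $u\prec_s w$ and $u'\prec_s w'$.

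To do this I would split into cases according to whether the hypothesis word $u$ and the new word $u'$ are comparable, dualizing the case split on $w$ and $w'$ in Lemma~\ref{lemma:left}. If $u'\preceq u$, then (when $u'\prec u$) the chain $u'\prec u\prec w'$ together with $u'\prec_s w'$ lets me apply Lemma~\ref{lemma:simplybuilt}(b) to extract $u\prec_s w'$, the case $u'=u$ being immediate. If instead $u\prec u'$, then $u\prec u'\prec w\prec w'$ with $u\prec_s w$ and $u'\prec_s w'$ feeds directly into Lemma~\ref{lemma:simplybuilt}(a) and yields $u\prec_s w'$. Either way the comparable case closes with $u\prec_s w'$.

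For the incomparable case I would use the lattice structure of $(A_V,\preceq)$. The key observation, dual to the inequality $u\preceq w\wedge w'$ used in Lemma~\ref{lemma:left}, is that since both $u\prec w$ and $u'\prec w$, the word $w$ is a common upper bound of $u$ and $u'$, whence $u\vee u'\preceq w$. Combining this with $u\preceq u\vee u'$ and $(u\wedge u')\prec_s(u\vee u')$ from Lemma~\ref{lemma:simplybuilt2}, I obtain the chain $(u\wedge u')\prec u'\prec(u\vee u')\prec w'$; applying Lemma~\ref{lemma:simplybuilt}(a) with $u'\prec_s w'$ gives $(u\wedge u')\prec_s w'$, and then Lemma~\ref{lemma:simplybuilt}(b) applied to $(u\wedge u')\prec u\prec w'$ delivers the desired $u\prec_s w'$.

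I expect the main obstacle to be essentially bookkeeping: getting every inequality to point the correct way under the dualization. The one genuinely non-mechanical step is recognizing that maximality should be contradicted by promoting the fixed witness $u$ to sit below $w'$ (rather than shortening some word below a fixed $w$ as in Lemma~\ref{lemma:left}), and correspondingly replacing ``$u\preceq w\wedge w'$'' by ``$u\vee u'\preceq w$''. Once those orientations are fixed, each case is a one- or two-step application of Lemma~\ref{lemma:simplybuilt} and Lemma~\ref{lemma:simplybuilt2}, matching the structure of the proof of Lemma~\ref{lemma:left} line for line.
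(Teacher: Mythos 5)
Your proof is correct and is exactly the dualization the paper intends: the paper omits the argument for Lemma~\ref{lemma:right}, stating only that it is similar to that of Lemma~\ref{lemma:left}, and your case analysis (comparability of $u$ and $u'$, with $u\vee u'\preceq w$ replacing $u\preceq w\wedge w'$ in the incomparable case) carries out that intended dual argument with all applications of Lemmas~\ref{lemma:simplybuilt} and~\ref{lemma:simplybuilt2} checking out.
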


We will need the following lemma in the next section.

\begin{lemma}\label{lemma:main} Let $(v_n)$ be a canonical generating sequence. Suppose $v\in\mathcal{F}$ and for some $n\in\mathbb{N}$,
$v_n\prec v\prec v_{n+1}$. Then $v_n\prec_s v_{n+1}$.
\end{lemma}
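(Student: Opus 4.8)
The plan is to exploit the hypothesis that something lies strictly between the consecutive canonical words $v_n$ and $v_{n+1}$. First I would observe that $v\in A_V$: since $v\prec v_{n+1}$ and $V$ is built from $v_{n+1}$, substituting the block decomposition of $v_{n+1}$ in terms of $v$ into that of $V$ in terms of $v_{n+1}$ exhibits $V$ as built from $v$. Next, because the generating sequence satisfies $v_0\prec v_1\prec\cdots$ it is a $\prec$-chain of strictly increasing lengths, and $v_n\prec v\prec v_{n+1}$ forces $|v_n|<|v|<|v_{n+1}|$; hence $v$ cannot appear in the (length-increasing) canonical enumeration and is therefore not canonical. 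By Definition \ref{definition:canonical} this yields a simple sandwich: there are $u,w\in A_V$ with $u\prec v\prec w$ and $u\prec_s w$. The goal then reduces to producing a single sandwich $u_0\prec_s w^*$ with $u_0\preceq v_n$ and $v_{n+1}\preceq w^*$, from which $v_n\prec_s v_{n+1}$ will follow by two applications of Lemma \ref{lemma:simplybuilt}(b) (first peeling $w^*$ down to $v_{n+1}$, then $u_0$ up to $v_n$).

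For the left endpoint, fix the $w$ above and let $u_0$ be a shortest word with $u_0\prec v\prec w$ and $u_0\prec_s w$. Lemma \ref{lemma:left} then gives that $u_0$ is a member of the canonical sequence, say $u_0=v_k$. Since $u_0\prec v\prec v_{n+1}$ we have $|u_0|<|v_{n+1}|$, so $k\le n$ and hence $u_0\preceq v_n$, as desired. This half is routine once Lemma \ref{lemma:left} is in hand.

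For the right endpoint I would consider $W=\{w'\in A_V : u_0\prec_s w' \text{ and } v\prec w'\}$, which is nonempty because $w\in W$. By Lemma \ref{lemma:comparable} every element of $W$ is comparable with $v_{n+1}$. If some $w'\in W$ satisfies $v_{n+1}\preceq w'$, take $w^*=w'$ and this part is done. The main obstacle is the opposite possibility: a priori $W$ might contain arbitrarily long words all lying below $v_{n+1}$, which would block a direct application of Lemma \ref{lemma:right} (which presupposes a longest candidate). I would resolve this by a boundedness dichotomy: if instead every $w'\in W$ satisfies $w'\prec v_{n+1}$, then $W$ is bounded in length by $|v_{n+1}|$, so a longest element $w_1\in W$ exists; Lemma \ref{lemma:right} (with $u_0$ and $v$ held fixed) makes $w_1$ canonical, and then $v_n\prec v\prec w_1$ forces $v_{n+1}\preceq w_1$ exactly as in the left-endpoint computation, contradicting $w_1\prec v_{n+1}$. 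Hence some $w^*\in W$ with $v_{n+1}\preceq w^*$ always exists.

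Finally, with $u_0\preceq v_n\prec v_{n+1}\preceq w^*$ and $u_0\prec_s w^*$ in hand, I would conclude via Lemma \ref{lemma:simplybuilt}(b): applying it to $u_0\prec v_{n+1}\prec w^*$ gives $u_0\prec_s v_{n+1}$ (trivial if $v_{n+1}=w^*$), and applying it to $u_0\prec v_n\prec v_{n+1}$ then gives $v_n\prec_s v_{n+1}$ (trivial if $u_0=v_n$). I expect the right-endpoint existence argument, i.e.\ converting the ``no longest candidate'' scenario into a contradiction through Lemma \ref{lemma:right} and the chain structure of the canonical sequence, to be the delicate step; the remainder is bookkeeping with $\prec$, $\prec_s$, and word lengths.
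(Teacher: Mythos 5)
Your proof is correct and follows essentially the same route as the paper's: extract a sandwich $u\prec v\prec w$ with $u\prec_s w$ from non-canonicity, push the left endpoint down to a canonical $u_0\preceq v_n$ via Lemma \ref{lemma:left}, push the right endpoint up past $v_{n+1}$ via Lemma \ref{lemma:right} and Lemma \ref{lemma:comparable}, and finish with Lemma \ref{lemma:simplybuilt}(b). The only cosmetic difference is that you phrase the right-endpoint dichotomy as ``some $w'$ dominates $v_{n+1}$ versus all lie below it'' (turning the second case into a contradiction), whereas the paper splits on whether a longest candidate exists; the two are equivalent and use the same lemmas.
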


\begin{proof} Since $v$ is not an element of the canonical generating sequence, there are $u, w\in A_V$ with $u\prec v\prec w$ and $u\prec_s w$. First fix an arbitrary such $w$. Take $u_0$ to be the shortest word with the properties with $u_0\prec v\prec w$ and $u_0\prec_s w$. By Lemma~\ref{lemma:left} $u_0$ is an element of the canonical generating sequence. Thus $u_0\preceq v_n$.

Now if there exists a longest word $w_0\in A_V$ with the properties $u_0\prec v\prec w_0$ and $u_0\prec_s w_0$, then by Lemma~\ref{lemma:right} this $w_0$ must be an element of the canonical generating sequence, and hence $v_{n+1}\preceq w_0$. By Lemma \ref{lemma:simplybuilt} (b), $v_n\prec_s v_{n+1}$.

If there are arbitrarily long words $w_0\in A_V$ with the properties $u_0\prec v\prec w_0$ and $u_0\prec_s w_0$, we can take $w_0$ to be an arbitrary such word with $|w_0|\geq |v_{n+1}|$. By Lemma \ref{lemma:comparable} $v_{n+1}\preceq w_0$. Now $u_0\preceq v_n\prec v_{n+1}\preceq w_0$ with $u_0\prec_s w_0$. By Lemma \ref{lemma:simplybuilt} (b) again, $v_n\prec_s v_{n+1}$.
\end{proof}


The following proposition shows that Theorems \ref{thmintrotrivialcentralizer} and \ref{thmintrotrivialcentralizer2} are equivalent.

\begin{proposition}\label{propequivalent}
Let $(q_m)$ and $(a_{m,i})$ be the cutting and spacer parameters for a rank-one transformation and suppose $(q_m)$ is bounded. Then the canonical cutting parameter is unbounded iff for all $k\in \N$ there exists $N\in\N$ such that for all $m,m^\prime, i,i^\prime $ with $N\leq m, m^\prime <N+k$, $0<i<q_m$ and $0<i^\prime<q_{m^\prime}$, we have $a_{m,i}=a_{m^\prime,i^\prime}$.
\end{proposition}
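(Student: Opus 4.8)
The plan is to argue entirely in the symbolic picture. Write $(w_m)$ for the generating sequence determined by the given parameters, $V=\lim_m w_m$, and $(v_n)$ for the canonical generating sequence of $V$; let $Q_n$ be the canonical cutting parameter, i.e. the number of occurrences of $v_n$ in $v_{n+1}$, and let $B$ bound the given cutting parameter, so each $w_{m+1}$ contains at most $B$ copies of $w_m$. The two workhorses are: occurrence counts are multiplicative along a chain $x\preceq y\preceq z$ (copies of $x$ in $z$ equals the product of copies of $x$ in $y$ and copies of $y$ in $z$); and, by Lemma~\ref{lemma:comparable}, every $w_m$ is comparable with every $v_n$. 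Since both sequences are $\prec$-increasing with lengths tending to infinity, comparability means each $w_m$ either equals a canonical word or lies strictly between two consecutive ones, and each $v_n$ sits between consecutive given words $w_{m}\preceq v_n\prec w_{m+1}$.

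For the direction ``constant windows of every length $\Rightarrow$ canonical cutting parameter unbounded'', fix $k$ and take $N$ so that $a_{m,i}=c$ for a single value $c$ and all $N\le m<N+k$, $0<i<q_m$. An easy induction on $j$ shows $w_{N+j}=w_N 1^c w_N\cdots 1^c w_N$ with every spacer equal to $c$, so $w_N\prec_s w_{N+k}$ and the number of copies of $w_N$ in $w_{N+k}$ is $q_N\cdots q_{N+k-1}\ge 2^k$. Since $w_N,w_{N+k}\in A_V$ and $w_N\prec_s w_{N+k}$, the definition of canonicity forces every word strictly between $w_N$ and $w_{N+k}$ to be non-canonical. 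I would then let $v_a$ be the largest canonical word $\preceq w_N$ and $v_b$ the smallest canonical word $\succeq w_{N+k}$, and check that they are consecutive in the canonical sequence: any canonical word strictly between them, being comparable to $w_N$ and $w_{N+k}$, would have to fall strictly between $w_N$ and $w_{N+k}$, which is impossible. Multiplicativity along $v_a\preceq w_N\preceq w_{N+k}\preceq v_b$ then gives $Q_a\ge q_N\cdots q_{N+k-1}\ge 2^k$, and as $k$ is arbitrary the canonical cutting parameter is unbounded.

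For the converse, suppose $Q_n$ is large. Locate $v_n,v_{n+1}$ in the given sequence by $w_{m_0}\preceq v_n\prec w_{m_0+1}$ and $w_{m_1}\preceq v_{n+1}\prec w_{m_1+1}$. Multiplicativity together with the bound $B$ gives $Q_n\le q_{m_0}\cdots q_{m_1}\le B^{\,m_1-m_0+1}$, so $m_1-m_0\ge \log_B Q_n-1$ is large; in particular some given word lies strictly between $v_n$ and $v_{n+1}$, so Lemma~\ref{lemma:main} yields $v_n\prec_s v_{n+1}$, say with common spacer $d$. The heart of the argument is to push this constant spacer down to the intermediate given stages: for each $j$ with $m_0<j<m_1$ we have $v_n\prec w_j\prec w_{j+1}\preceq v_{n+1}$, and applying Lemma~\ref{lemma:simplybuilt}(b) to the chain $v_n\prec w_{j+1}\preceq v_{n+1}$ gives $v_n\prec_s w_{j+1}$. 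Matching this simple build (all gaps $1^d$) against the factorization $w_{j+1}=w_j1^{a_{j,1}}w_j\cdots 1^{a_{j,q_j-1}}w_j$ then shows $a_{j,i}=d$ for every $i$. Hence the stages $m_0<j<m_1$ form a window of length $\ge m_1-m_0-1\to\infty$ on which every spacer entry equals the single value $d$, which is exactly the asserted condition.

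The main obstacle is the bookkeeping in this last propagation step: one must verify that the top-level gaps between consecutive $w_j$-blocks inside $w_{j+1}$ genuinely coincide with $v_n$-level gaps of the simple build $v_n\prec_s w_{j+1}$, so that they are all forced to be $1^d$ and the resulting $d$ is the \emph{same} across the whole window. Everything else reduces, via comparability and multiplicativity of occurrence counts, to the two quantitative estimates $Q_a\ge 2^k$ and $Q_n\le B^{\,m_1-m_0+1}$, which are precisely what convert ``constant window of length $k$'' into ``canonical cutting number $\ge 2^k$'' and back.
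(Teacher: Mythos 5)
Your proposal is correct and follows essentially the same route as the paper: the forward direction converts a constant spacer window into $w_N\prec_s w_{N+k}$, uses the canonicity definition to rule out canonical words strictly between $w_N$ and $w_{N+k}$, and reads off a canonical cutting value of at least $2^k$; the converse uses comparability to trap many consecutive $w_m$'s between consecutive canonical words, then Lemma~\ref{lemma:main} and Lemma~\ref{lemma:simplybuilt}(b) to force all spacers in that window to agree. The only cosmetic difference is that the paper measures size via the zero-count ratio $Z(v_{n+1})/Z(v_n)$ rather than via your occurrence-count multiplicativity, and performs the final push-down in a single application of Lemma~\ref{lemma:simplybuilt}(b) rather than stage by stage.
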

\begin{proof}
Let $(w_m)$ be the generating sequence corresponding to $(q_m)$ and $(a_{m,i})$.  Let $(v_n)$ be the corresponding canonical generating sequence.

Suppose for for all $m,m^\prime, i,i^\prime $ with $N\leq m, m^\prime <N+k$, $0<i<q_m$ and $0<i^\prime<q_{m^\prime}$, we have $a_{m,i}=a_{m^\prime,i^\prime}$.  Fix $k \in \N$.  The assumption implies that there exists $N \in \N$ such that $w_N \prec_s w_{N+k}$.  
By Lemma \ref{lemma:comparable} each $v_n$ is comparable to each $w_m$.  Let $n$ be as large as possible such that $v_n \preceq w_N$.  We must have $w_{N+k} \preceq v_{n+1}$, by the definition of the canonical generating sequence.  Then the canonical cutting parameter at position $n$ is at least $q_N q_{N+1} \ldots  q_{N + k -1} \geq  2^k$.  Since $k$ was arbitrary, the canonical cutting parameter is unbounded.

Conversely suppose the canonical cutting parameter is unbounded.  Let $Q$ be an upper bound for $(q_m)$.  For a finite word $\alpha$, let $Z(\alpha)$ denote the number of 0s in $\alpha$.  Note that $Z(w_{m + 1}) = q_{m} Z(  w_{m}) \leq Q Z(  w_{m})$.  Fix $k \in \N$ and choose $n$ so that $$\frac{Z(v_{n+1})}{Z(v_n)} \geq Q^{k+1}.$$  Let $N$ be as small as possible such that $v_n \preceq w_N$.  It follows that $$v_n \preceq w_N \prec w_{N+1} \prec \ldots \prec w_{N + k} \preceq v_{n+1}.$$  By Lemma~\ref{lemma:comparable}, we have $v_n \prec_s v_{n+1}$.  By Lemma \ref{lemma:simplybuilt} (b), $w_{N } \prec_s w_{N+k}$.  Thus, for all $m,m^\prime, i,i^\prime $ with $N\leq m, m^\prime <N+k$, $0<i<q_m$ and $0<i^\prime<q_{m^\prime}$, we have $a_{m,i}=a_{m^\prime,i^\prime}$.

\end{proof}

\section{Characterizing Trivial Centralizer}

In this section we work with bounded rank-one transformations and characterize those with trivial centerlizer. We first define our terminology. 

\begin{definition} (a) A rank-one transformation (or a symbolic rank-one system) is {\em bounded} if it can be described by a pair of cutting and spacer parameters that are both bounded.

(b) A rank-one transformation (or a symbolic rank-one system) is {\em canonically bounded} if it can be described by a pair of canonical cutting and spacer parameters that are both bounded.
\end{definition}

Note that odometer maps can always be described by bounded (albeit trivial) parameters, and therefore are bounded rank-one transformations. It will follow from our results below that they are not canonically bounded. Since canonical parameters are always nontrivial, canonically bounded symbolic rank-one systems are nondegenerate. 

A well-known example of bounded rank-one transformation is Chacon's tranformation. A generating sequence is given by
$$ v_0=0,\  v_{n+1}=v_nv_n1v_n. $$
This is canonical because there are no other finite words from which the corresponding infinite rank-one word is built and each $v_{n+2}$ is not built simply from $v_n$. It follows that Chacon's transformation is canonically bounded.

The following example was considered by del Junco and Rudolph in \cite{delJuncoRudolph}. Let $\Delta$ denote the set of all triangular numbers, i.e., the set of all $n(n+1)/2$ for positive integers $n$. Consider the generating sequence given by
$$ v_0=0;\ v_n=v_{n-1}1v_{n-1} \mbox{ if $n\in\Delta$;}\  v_n=v_{n-1}v_{n-1} \mbox{ otherwise.} $$
The resulting rank-one system is bounded. The corresponding canonical generating sequence, however, is given by
$$ u_0=0; \ u_n=u_{n-1}1u_{n-1} \mbox{ if $n$ is odd};\ u_n=u_{n-1}^{2^n} \mbox{ otherwise}. $$
Thus the rank-one system is not canonically bounded.

Our objective of this section is to characterize the property of having trivial centralizer for bounded rank-one transformations. For a Lebesgue space $(X,\mu)$ let $\textnormal{Aut}(X, \mu)$ denote the group of all invertible measure preserving transformations, where two transformations are identified if they differ only on a null set. With the weak topology, $\textnormal{Aut}(X, \mu)$ is a Polish group. If $T\in \textnormal{Aut}(X, \mu)$, the {\it centralizer} (or {\it commutant}) of $T$ is
$$ C(T)=\{ S\in \textnormal{Aut}(X, \mu)\,:\, TS=ST\}. $$
The classical theorem of King \cite{King1} states that if $T$ is rank-one then $C(T)$ is the weak closure of the set of integral powers of $T$ in $\textnormal{Aut}(X, \mu)$. 

An invertible measure preserving transformation $T$ is said to have {\it trivial centralizer} if $C(T)=\{T^n\,:\, n\in\mathbb{Z}\}$. For a rank-one transformation $T$, this is equivalent to saying that the set of integral powers of $T$ is a discrete subgroup of $\textnormal{Aut}(X, \mu)$. In particular, if there is an increasing sequence $(n_k)$ of positive integers such that $T^{n_k}$ converges to the identity map in the weak topology, then $C(T)$ is perfect, and therefore $T$ does not have trivial centralizer.

Del Junco has shown in \cite{delJunco} that Chacon's transformation has trivial centralizer.

We are now ready to prove Theorem \ref{thmintrotrivialcentralizer}, i.e., a bounded rank-one transformation has trivial centralizer iff it is canonically bounded.  We separate the two directions of argument into the following theorems.

\begin{theorem}
\label{thmnottrivialcentralizer}
If a bounded nondegenerate symbolic rank-one system is not canonically bounded, then it does not have trivial centralizer.
\end{theorem}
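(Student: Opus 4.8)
The plan is to produce an increasing sequence $(p_k)$ of positive integers with $T^{p_k}\to\mathrm{id}$ in the weak topology; by King's theorem and the observation recorded after it, the existence of such a rigidity sequence forces $C(T)$ to be perfect, so $T$ cannot have trivial centralizer. To set up, I fix a pair of bounded cutting and spacer parameters $(q_m)$, $(a_{m,i})$ for the system, with generating sequence $(w_m)$, heights $h_m=|w_m|$, and a bound $M$ for the spacer parameter. Since the canonical spacer parameter is just a rearrangement of $(a_{m,i})$ it is bounded, so the hypothesis that the system is \emph{not} canonically bounded means the canonical cutting parameter is unbounded. Proposition~\ref{propequivalent} then supplies, for every $k\in\N$, an index $N=N_k$ such that every spacer $a_{m,i}$ with $N_k\le m<N_k+k$ equals a single common value $a_k\le M$.

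Next I would read off the combinatorial consequence of such a run of equal spacers. A short induction on $k$ shows that when all spacers in stages $N_k,\dots,N_k+k-1$ equal $a_k$,
$$ w_{N_k+k}=(w_{N_k}1^{a_k})^{Q_k-1}\,w_{N_k},\qquad Q_k=q_{N_k}q_{N_k+1}\cdots q_{N_k+k-1}\ge 2^k. $$
I set $p_k=h_{N_k}+a_k$, which is exactly the distance between the starting positions of consecutive copies of $w_{N_k}$ inside $w_{N_k+k}$. I first note that $N_k\to\infty$: if some fixed $N'$ served as $N_k$ for arbitrarily large $k$, then all spacers from stage $N'$ onward would share one value, making the rank-one word periodic and the system degenerate, contrary to hypothesis. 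Hence $p_k\ge h_{N_k}\to\infty$, and after passing to a subsequence $(p_k)$ is a strictly increasing sequence of positive integers.

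The heart of the argument is to show $T^{p_k}\to\mathrm{id}$ weakly. Passing to the isomorphic geometric picture, the stage-$(N_k+k)$ tower consists of $Q_k$ consecutive copies of the stage-$N_k$ tower, with neighboring copies starting exactly $p_k$ levels apart. Thus on the set $G_k$ formed by the first $Q_k-1$ of these copies, $T^{p_k}$ carries each point to the corresponding point of the next copy and therefore preserves the entire internal level structure of the stage-$N_k$ tower. I would bound $\mu(X\setminus G_k)$ by the mass outside the stage-$(N_k+k)$ tower (which tends to $0$ since $N_k+k\to\infty$), the last copy (measure $\le 1/Q_k\le 2^{-k}$), and the between-copy spacers (a fraction at most $a_k/h_{N_k}\le M/h_{N_k}\to 0$); hence $\mu(G_k)\to 1$. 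Now fix $n$ and let $E$ be any union of levels of the stage-$n$ tower. Once $N_k>n$, the structure preserved on $G_k$ includes every stage-$n$ level, so for $x\in T^{p_k}(G_k)$ one has $x\in E\iff T^{-p_k}x\in E$; consequently $T^{p_k}E\,\triangle\,E\subseteq X\setminus T^{p_k}(G_k)$ and $\mu(T^{p_k}E\,\triangle\,E)\le 1-\mu(G_k)\to 0$. Since finite unions of tower levels are dense in the measure algebra, this yields $T^{p_k}\to\mathrm{id}$ weakly and completes the argument.

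I expect the main obstacle to be the bookkeeping in the last step: one must verify that a run of equal spacers makes consecutive $w_{N_k}$-copies land \emph{exactly} $p_k$ apart, so that $T^{p_k}$ reproduces the fine structure copy-by-copy, and one must control the measure of the exceptional set using only the boundedness of the spacers together with $h_{N_k},Q_k\to\infty$. A secondary but essential point is the observation that $N_k\to\infty$, since this is precisely what upgrades the $T^{p_k}$ into a genuine (nonconstant) rigidity sequence rather than producing a spurious period that would contradict nondegeneracy.
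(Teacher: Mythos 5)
This is correct and follows essentially the same route as the paper: you locate stages with a long run of constant spacer values and large effective cutting (via Proposition~\ref{propequivalent}, where the paper works directly with the canonical generating sequence and Lemma~\ref{lemma:main}), set the rigidity times equal to height plus spacer, and verify weak convergence to the identity by a tower-copy estimate. The only noteworthy difference is that your global bound on the exceptional set uses boundedness of the spacer parameter, whereas the paper's level-by-level estimate $\lambda[T^{r_k}(B_N)\triangle B_N]\le 2\mu[B_N]/q_{n_k}$ does not (which is why its Proposition~\ref{propnottrivialcentralizer} holds assuming only a bounded cutting parameter); for the theorem as stated this is immaterial.
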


\begin{theorem}
\label{thmtrivialcentralizer}
If a symbolic rank-one system is canonically bounded, then it has trivial centralizer.
\end{theorem}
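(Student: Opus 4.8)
The plan is to show that a canonically bounded symbolic rank-one system $(X,\mu,\sigma)$ has trivial centralizer, and the natural strategy is to prove the contrapositive of the key obstruction noted just before these theorems: if the integral powers of $\sigma$ are \emph{not} discrete in $\mathrm{Aut}(X,\mu)$, then there is an increasing sequence $(n_k)$ of positive integers with $\sigma^{n_k}\to\mathrm{id}$ weakly, and this forces the centralizer to be nontrivial. So it suffices to show that under canonical boundedness, no such sequence $(n_k)$ with $n_k\not\equiv 0$ can converge to the identity; equivalently, the only way $\sigma^{n_k}\to\mathrm{id}$ is the trivial one. I would work entirely on the symbolic side, using the canonical generating sequence $(v_n)$ with bounded cutting parameter $(q_n)$ and bounded spacer parameter $(a_{n,i})$, and the towers they induce: the level sets $U_{v_n,0},\sigma(U_{v_n,0}),\dots,\sigma^{h_n-1}(U_{v_n,0})$ of height $h_n$ exhaust $X$ in measure as $n\to\infty$.

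First I would translate weak convergence $\sigma^{n_k}\to\mathrm{id}$ into a combinatorial statement about the displacements $n_k$ relative to the towers. The standard fact is that $\sigma^{n}$ is close to the identity in the weak topology precisely when, for towers of large height $h_N$, the displacement $n$ moves almost every level to itself, i.e.\ $n$ is close (modulo the tower height, accounting for spacers) to a multiple of $h_N$ on a large-measure set of the tower. Concretely, I would fix a large $N$, look at where a given level $\sigma^{j}(U_{v_N,0})$ is sent by $\sigma^{n}$, and quantify the measure of points that return to their own level. The goal is to extract from $\sigma^{n_k}\to\mathrm{id}$ the assertion that, for each $N$, eventually $n_k$ is (up to a small error relative to $h_N$) a multiple $p_k h_N$ of the $N$-th tower height for some integer $p_k$, and moreover that the spacer contributions accumulated across the cuts must match up on a set of near-full measure.

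The crucial step — and what I expect to be the main obstacle — is to convert the uniformity coming from boundedness into a genuine obstruction. Because $(q_n)$ and $(a_{n,i})$ are both bounded, the ratios $h_{n+1}/h_n = q_n + \sum_{0<i<q_n} a_{n,i}/h_n$ are controlled and the tower heights grow at a bounded geometric rate; more importantly, the canonical generating sequence guarantees (via Lemma~\ref{lemma:main}) that whenever one passes strictly between two consecutive canonical words, the building is \emph{simple}, so the places where the spacer pattern is \emph{non}-constant are exactly the transitions $v_n\to v_{n+1}$ that are not simply built. Canonical boundedness then says these genuine (non-simple) transitions occur with bounded "width" and bounded spacer values. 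I would argue that if $n_k\to\infty$ with $\sigma^{n_k}\to\mathrm{id}$, then matching the level structure at stage $N$ forces $n_k$ to align with $h_N$, but matching it \emph{simultaneously} at stage $N$ and stage $N+1$ forces, through the cutting-and-stacking recursion $h_{N+1}=q_N h_N+\sum_{0<i<q_N}a_{N,i}$, a compatibility on the multipliers $p_k$ that the bounded, non-constant spacer blocks cannot satisfy unless $p_k$ is eventually constant in $N$. Tracking this alignment error as $N$ increases is the delicate part: one must show the errors do not telescope away, and here I would use that under canonical boundedness infinitely many stages are non-simply-built with bounded nonzero spacer entries, so the discrepancy a nonzero displacement accumulates at those stages stays bounded below and cannot vanish.

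Finally I would close the argument by deriving a contradiction: the above alignment analysis would show that if $\sigma^{n_k}\to\mathrm{id}$ with $n_k\neq 0$, then along a subsequence the $n_k$ must be eventually a fixed integer (forcing $\sigma^{n_k}$ constant, contradicting $n_k\to\infty$) or else the spacer-discrepancy bound is violated. Hence no nontrivial sequence of powers converges weakly to the identity, the integral powers of $\sigma$ form a discrete subgroup of $\mathrm{Aut}(X,\mu)$, and therefore by King's theorem $C(\sigma)=\{\sigma^n:n\in\mathbb{Z}\}$, i.e.\ the system has trivial centralizer. The technical heart, as flagged, is the quantitative bookkeeping in the middle paragraph — controlling how a fixed displacement interacts with bounded but non-constant spacer insertions across the canonical stages — and I would expect to isolate that bookkeeping into a self-contained lemma about how $\sigma^n$ permutes tower levels modulo the accumulated spacers.
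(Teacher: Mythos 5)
Your overall architecture is legitimate but genuinely different from the paper's: you reduce trivial centralizer to discreteness of $\{\sigma^n : n\in\mathbb{Z}\}$ in $\textnormal{Aut}(X,\mu)$ and then invoke King's theorem, whereas the paper never passes through rigidity in this direction. Instead it takes an arbitrary $\psi$ commuting with $\sigma$, uses density of the sets $E_{n,i}$ to find canonical words $W_k$ and $W_l$ such that for most $x$ the expected occurrences of $W_l$ are ``good'' (meaning $\phi(x)$, with $\phi=\psi\circ\sigma^{-z}$, has an expected occurrence of $W_k$ at the same position), and then runs an ergodic-theorem-plus-word-combinatorics argument (Propositions \ref{propmuae} through \ref{propcontradiction}) to show that almost surely all occurrences are good, whence $\phi(x)=\sigma^{r}(x)$. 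Your reduction itself is sound: for an aperiodic system, non-discreteness of the powers yields an increasing rigidity sequence, a discrete subgroup of a Hausdorff group is closed, and King's theorem then gives $C(\sigma)=\{\sigma^n : n\in\mathbb{Z}\}$; moreover the target statement (canonically bounded implies no rigidity sequence) is true, being equivalent to the theorem.

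The genuine gap is that the entire content of the theorem now sits in the claim that a canonically bounded system admits no rigidity sequence, and your middle paragraph only describes what such a proof would have to accomplish. Concretely, you need a fixed test set $A$ (say $A=U_{W_N,0}$ for suitable $N$) and a lower bound on $\mu[\sigma^{n}(A)\,\triangle\, A]$ that is uniform over all sufficiently large $n$; the assertions that ``matching forces $n_k$ to align with $h_N$,'' that there is ``a compatibility on the multipliers $p_k$ that the bounded, non-constant spacer blocks cannot satisfy,'' and that ``the errors do not telescope away'' are precisely the statements requiring proof, and no mechanism for them is supplied. Note that boundedness of the given parameters cannot by itself produce such a bound --- the del Junco--Rudolph example in Section 3 is bounded and rigid --- so the estimate must be extracted from canonicity, i.e.\ from the fact that consecutive canonical words are never related by simple building; converting that qualitative fact into a uniform measure estimate valid for \emph{every} displacement $n$ (not merely $n=h_m$, where partial rigidity already concentrates two-thirds of the mass correctly in the Chacon case) is exactly the hard step you have deferred to an unproved ``self-contained lemma.'' As written, the proposal is a correct reduction plus a plan, not a proof.
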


Since Theorem~\ref{thmintrotrivialcentralizer} was stated for geometric rank-one transformations, we need to see that Theorems~\ref{thmnottrivialcentralizer} and \ref{thmtrivialcentralizer} are sufficient to establish Theorem \ref{thmintrotrivialcentralizer}. For this let $T$ be a bounded rank-one transformation and fix a pair of cutting and spacer parameters $(q_n)$ and $(a_{n,i})$ that are both bounded. If the pair of parameters is nontrivial, then it also describes a nondegenerate symbolic rank-one system that is isomorphic to $T$. In this case Theorems~\ref{thmnottrivialcentralizer} and \ref{thmtrivialcentralizer} apply. On the other hand, if the pair of parameters is trivial, then $T$ is an odometer map and does not have trivial centralizer. In this case it suffices to argue that $T$ is not canonically bounded. Suppose it were, then $T$ could be described by a pair of canonical cutting and spacer parameters that are both bounded. In particular the pair of parameters is nontrivial, and hence it also describes a nondegerate symbolic rank-one system that is isomorphic to $T$. By Theorem \ref{thmtrivialcentralizer} the symbolic rank-one system has trivial centralizer, and therefore so does $T$, a contradiction.

\subsection{Nontrivial centralizer}

In this subsection we prove Theorem~\ref{thmnottrivialcentralizer}. For this we in fact prove the following stronger result without assuming the boundedness of the spacer parameter.

\begin{proposition}
\label{propnottrivialcentralizer}
Let $(X, \mu, \sigma)$ be a symbolic rank-one system with canonical cutting and spacer parameters $(q_n)$ and $(a_{n,i})$, and suppose that $(q_n)$ is unbounded.  Suppose that $(X, \mu, \sigma)$ can also be described by the cutting and spacer parameters $(q^\prime_m)$ and $(a_{m,j}^\prime)$ with $(q^\prime_m)$ bounded. Then $(X, \mu, \sigma)$ does not have trivial centralizer.  
\end{proposition}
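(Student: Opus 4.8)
The plan is to produce a strictly increasing sequence $(p_k)$ of positive integers with $\sigma^{p_k}\to\mathrm{id}$ in the weak topology. By the observation recorded above (an increasing sequence of positive powers converging weakly to the identity makes $C(\sigma)$ perfect), this is enough to conclude that $(X,\mu,\sigma)$ does not have trivial centralizer. It is convenient to pass to the isomorphic geometric rank-one transformation $T$ described by the bounded parameters; since the symbolic and geometric systems are isomorphic, any weak-convergence statement proved for $T$ transfers to $\sigma$.

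Let $(w_m)$ be the generating sequence for $(q^\prime_m)$ and $(a^\prime_{m,j})$, and write $h^\prime_m=|w_m|$ for the associated tower heights. Applying Proposition~\ref{propequivalent} to the bounded parameters (whose canonical cutting parameter is the unbounded $(q_n)$), we obtain, for every $k\in\N$, an index $N=N_k$ with $w_N\prec_s w_{N+k}$. That is, $w_{N+k}=(w_N 1^{a})^{Q-1}w_N$ is built simply from $w_N$ with a single constant spacer value $a=a^{(k)}$ and with $Q=q^\prime_N q^\prime_{N+1}\cdots q^\prime_{N+k-1}\geq 2^k$ copies. I regard the passage from stage $N$ to stage $N+k$ in the geometric model as one cut-and-stack step: the stage-$N$ tower is cut into $Q$ columns $B^\prime_{N,1},\dots,B^\prime_{N,Q}$ and restacked with $a$ spacers between consecutive columns. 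Set $p_k=h^\prime_N+a$.

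The technical heart is the estimate that $T^{p_k}$ is close to the identity, uniformly in $a$. For $b\in B^\prime_{N,i}$ with $i<Q$ one checks directly that $T^{h^\prime_N+a}b\in B^\prime_{N,i+1}$, so for every level $0\leq j<h^\prime_N$ we have $T^{p_k}(T^j B^\prime_{N,i})=T^j B^\prime_{N,i+1}$, and both of these sets lie inside the single stage-$N$ level $T^j B^\prime_N$. The only column on which $T^{p_k}$ leaves the tower is the last one, $i=Q$. Consequently, for $A$ any union of levels of the stage-$N$ tower, $T^{p_k}A$ and $A$ can differ only within the first and last of the $Q$ columns, whence
$$\mu(T^{p_k}A\,\triangle\,A)\leq \frac{2\,h^\prime_N\,\mu(B^\prime_N)}{Q}\leq \frac{2}{Q}\leq 2^{1-k},$$
using $h^\prime_N\mu(B^\prime_N)=\mu(\text{stage-}N\text{ tower})\leq 1$. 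I want to emphasize that this bound does not involve $a$: only the one exceptional column contributes, which is precisely what lets the argument succeed without assuming the spacer parameter is bounded. I expect this uniform-in-$a$ closeness estimate, together with the verification below that $N_k\to\infty$, to be the main obstacle; the remainder is routine approximation in the measure algebra.

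Two points finish the proof. First, $N_k\to\infty$: if some fixed $N_0$ were a witness for infinitely many $k$, then by Lemma~\ref{lemma:simplybuilt}(b) we would have $w_{N_0}\prec_s w_{N_0+k}$ for all $k$ with one common spacer $a$, forcing $V=(w_{N_0}1^{a})^{\infty}$ to be periodic, contradicting nondegeneracy. Hence $N_k\to\infty$, and in particular $p_k\geq h^\prime_{N_k}\to\infty$. Second, every fixed measurable set can be approximated in measure by a union of levels of some fixed stage-$M$ tower, and once $N_k\geq M$ such a set is again a union of levels of the finer stage-$N_k$ tower; a standard three-term estimate then upgrades the displayed bound to genuine weak convergence $T^{p_k}\to\mathrm{id}$. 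Transferring to $\sigma$ and passing to a subsequence so that $(p_k)$ is strictly increasing gives the required sequence, completing the proof.
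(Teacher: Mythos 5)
Your proof is correct and takes essentially the same approach as the paper: both locate a long ``simply built'' stretch of the construction and show that $\sigma^{h+a}$ (tower height plus the common spacer) converges weakly to the identity via the same one-exceptional-column tower estimate. The only difference is bookkeeping --- you extract the simply built stretch from the bounded parameters via Proposition~\ref{propequivalent}, while the paper argues directly with the canonical parameters using Lemma~\ref{lemma:main} and a zero-counting comparison.
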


\begin{proof}
Let $Q$ be a bound for $(q^\prime_m)$.  We show that $(X, \mu, \sigma)$ does not have trivial centralizer in two steps.  First we will find an increasing sequence $(n_k)$ of positive integers so that 
\begin{enumerate}
\item  $q_{n_k} \rightarrow \infty$, and
\item  for all $k \in \N$ and $0 < i,i^\prime < q_{n_k}$, $a_{n_k, i } = a_{n_k, i^\prime}$.
\end{enumerate}
Then, with such a sequence, we define $r_k = h_{n_k} + a_{n_k, 1}$ and show that $\sigma^{r_k}$ converges to the identity map in the weak topology.

Since the canonical cutting parameter $(q_n)$ is not bounded, we can choose an increasing sequence $(n_k)$ of positive integers so that $$Q < q_{n_0} < q_{n_1} < q_{n_2} < \ldots.$$  
We claim that for all $k \in \N$ and  $0 < i,i^\prime < q_{n_k}$, $a_{n_k, i } = a_{n_k, i^\prime}.$

To prove this, let $V$ denote the rank-one word corresponding the symbolic system $(X, \mu, \sigma)$.  Let $(v^\prime_m)$ denote the generating sequence corresponding to the parameters $(q^\prime_m)$ and $(a^\prime_{m,j})$.  Let $(v_n)$ denote the canonical generating sequence; $(v_n)$ corresponds to the canonical parameters $(q_n)$ and $(a_{n,i})$.  Fix $k \in \N$.

We argue that there is some $m \in \N$ so that $v_{n_k} \prec v'_m \prec v_{n_k+1}$. This is similar to some arguments in the proof of Proposition~\ref{propequivalent}. In fact, we use the following notation from that proof. For a finite word $\alpha$, let $Z(\alpha)$ denote the number of 0s in $\alpha$.  Note that $Z(v_{n_k + 1}) = q_{n_k} Z(  v_{n_k})$.  But for each $m \in \N$, $Z(v^\prime_{m + 1}) = q^\prime_m Z(v^\prime_m)$.  Since $q_{n_k} > Q$, there must be some $m \in \N$ so that $$Z(v_{n_k}) < Z (v^\prime_m) < Z(v_{n_k+1}).$$  Since $v_{n_k}$ and $v_{n_k + 1}$ are both in the canonical generating sequence for $V$, they are comparable with every finite word from which $V$ is built, by Lemma~\ref{lemma:comparable}.  It follows that $v_{n_k} \prec v'_m \prec v_{n_k + 1}$.

Now by Lemma \ref{lemma:main} we know that $v_{n_k} \prec_s v_{n_k + 1}$, which implies that for all $0<i,i^\prime < q_{n_k}$, $a_{n_k, i } = a_{n_k, i^\prime}$. This proves our claim.

We now have an increasing sequence $n_k$ satisfying (1) and (2) above. Let $r_k = h_k + a_{n_k, 1}$.  We need to show that $\sigma^{r_k}$ converges to the identity map in the weak topology.  To do so, let $T$ be a geometric rank-one transformation that is described by $(q_n)$ and $(a_{n,i})$. Let $\lambda$ be the Lebesgue measure on $[0,1]$. Since the two measure preserving systems $([0,1],\lambda,T)$ and $(X,\mu,\sigma)$ are isomorphic, it suffices to show that $T^{r_k}$ converges weakly to the identity map on $[0,1]$.

We need to show that for every Lebesgue measurable set $A\subseteq [0,1]$, $$\lambda [ T^{r_k} (A) \triangle A ] \rightarrow 0.$$  Since the levels of the Rokhlin towers are dense in the measure algebra, it suffices to show the above when $A$ is the level of a tower, or in fact, when $A$ is the base of one of the towers. 

Let $B_n$ denote the base of the stage-$n$ tower in the construction of $T$.  Fix $N \in \N$.  We claim that $$\lambda[ T^{r_k} (B_N) \triangle B_N] \leq \frac{2\mu[B_N]}{q_{n_k}}.$$
Since $q_{n_k} \rightarrow \infty$, this would imply $\lambda[ T^{r_k} (B_N) \triangle B_N] \rightarrow 0$ as needed.  

Define  $$I _k= \{0 \leq i < h_{n_k} : T^i (B_{n_k}) \subseteq B_N\}$$ and $$J_k =  \{0 \leq j < h_{n_k+1} : T^j (B_{n_k +1}) \subseteq B_{n_k}\}.$$  We know that for each $0 < i < q_{n_k}$, $a_{n_k, i} = r_k - h_{n_k}$.   Thus, $$J_k = \{0, r_k, 2r_k, \ldots, (q_{n} -1)r_k\}.$$  It is clear that $$B_N = \bigcup_{i \in I_k, j \in J_k}  T^{i + j} (B_{n_k+1})$$ and that the union above is disjoint.  Let $J_k^\prime = J_k \setminus \{(q_n -1)r_k\}$.  Let $$C =  \bigcup_{i \in I_k, j \in J_k^\prime}  T^{i + j} (B_{n_k+1}).$$ Notice that $C \subseteq B_N$ and $\lambda [C] = \frac{q_{n_k} - 1}{q_{n_k} } \lambda[B_N]$.  Notice also that $T^{r_k} (C) \subseteq B_N$.  These imply that $$\lambda[ T^{r_k} (B_N) \triangle B_N] \leq \frac{2\mu[B_N]}{q_{n_k}}.$$
\end{proof}

\subsection{Occurrences of finite words}
The rest of this section, consisting of two subsections, is devoted to a proof of Theorem \ref{thmtrivialcentralizer}. In this part we will work primarily in the symbolic setting. In the current subsection we develop some more tools for the study of symbolic rank-one systems.

Let $(X, \mu, \sigma)$ be a symbolic rank-one system. In the proofs of our results we will talk about {\it occurrences} of finite words (especially those words on the canonical generating sequence) in an element $x\in X$. It was shown in \cite{GaoHill} (Proposition 2.31) that, if $w$ is on the canonical generating sequence, then any $x\in X$ can be uniquely written as
$$ x=\dots w1^{a_{-1}}w1^{a_0}w1^{a_1}\dots $$
for nonnegative integers $a_i$, $i\in\Z$. We say that $x$ is {\it built from} $w$ and that the demonstrated occurrences of $w$ in $x$ are {\it expected}. It is possible for $x$ to contain unexpected occurrences of $w$, but $x$ will not be built from $w$ with unexpected occurrences. Two occurrences of $w$ in $x$ are {\it consecutive} if they are disjoint and the word occurring in between contains no $0$. In other words, when a word of the form $w1^aw$ occurs in $x$, the two demonstrated occurrences of $w$ are said to be consecutive, and the first occurrence is said to {\it precede} the second occurrence, and the second is said to {\it follow} the first one. A {\it string of occurrences} of $w$ in $x$ is nothing but an occurrence of a single finite word $u$ in $x$ where $u$ is built from $w$.

Now suppose $(W_n)$ is the canonical generating sequence for $X$. For each $n\in\N$ and $i\in \Z$ define
$$ E_{n,i}=\{x\in X\,:\, \mbox{ $x$ has an expected occurrence of $W_n$ beginning at $i$}\}. $$
We will need the following lemmas in subsequent proofs.

\begin{lemma}
\label{lemma:Enidensity}
For any $l\in \N$ the collection $\{E_{n,i}: n\geq l \textnormal{ and } i \in \Z\}$ is dense in the measure algebra of all measurable subsets of $X$.
\end{lemma}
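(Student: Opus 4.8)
The plan is to show that the Boolean subalgebra generated by $\{E_{n,i} : n \ge l,\ i \in \Z\}$ is dense in the measure algebra. Since the Boolean operations are continuous for the metric $d(A,B)=\mu(A\triangle B)$, it suffices to approximate a generating family of the measure algebra by finite Boolean combinations of the $E_{n,i}$. The basic clopen sets $U_{\alpha,k}$ generate the clopen algebra, and the clopen algebra is dense in the measure algebra because $X$ is a Cantor set and $\mu$ is a regular Borel probability measure. So I would reduce the lemma to the following claim: for every finite word $\alpha$, every $k\in\Z$, and every $\epsilon>0$, there is an $n\ge l$ and a finite union of sets $E_{n,i}$ that approximates $U_{\alpha,k}$ to within $\epsilon$.

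To prove the claim, fix $\alpha$ and $k$ and take $n$ large. Let $S=\{s : 0\le s\le |W_n|-|\alpha| \text{ and } W_n \text{ has } \alpha \text{ at position } s\}$ be the set of occurrences of $\alpha$ internal to the word $W_n$, and set $F_n=\bigcup_{s\in S}E_{n,k-s}$. Two observations drive the argument. First, $F_n\subseteq U_{\alpha,k}$: if $x\in E_{n,k-s}$ then $x$ has an expected occurrence of $W_n$ beginning at $k-s$, and since $W_n$ has $\alpha$ at position $s$ with $s+|\alpha|\le |W_n|$, the point $x$ has $\alpha$ beginning at $(k-s)+s=k$. Second, the sets $E_{n,k-s}$, $s\in S$, are pairwise disjoint: distinct $s,s'\in S$ satisfy $|s-s'|\le |W_n|-|\alpha|<|W_n|$, whereas two distinct expected occurrences of $W_n$ in any $x$ have starting positions at least $|W_n|$ apart, so $x$ cannot lie in both $E_{n,k-s}$ and $E_{n,k-s'}$.

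The heart of the proof is a measure computation. By shift-invariance all $E_{n,i}$ have the same measure, and $\mu(E_{n,0})=\mu(U_{0,0})/Z(W_n)$, computed from the limiting frequency of expected occurrences of $W_n$: in $v_m$ there are $Z(v_m)/Z(W_n)$ many $W_n$-blocks among $Z(v_m)$ zeros, so the expected-occurrence frequency per zero is $1/Z(W_n)$, and normalizing gives the stated value. Using disjointness, $\mu(F_n)=|S|\cdot\mu(E_{n,0})=\frac{|S|}{Z(W_n)}\,\mu(U_{0,0})$. Since $|S|$ is exactly the number of occurrences of $\alpha$ in $W_n=v_n$, the definition of $\mu_0$ gives $|S|/Z(W_n)\to\mu_0(U_{\alpha,0})$, hence $\mu(F_n)\to\mu_0(U_{\alpha,0})\,\mu(U_{0,0})=\mu(U_{\alpha,0})=\mu(U_{\alpha,k})$. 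Combined with $F_n\subseteq U_{\alpha,k}$ this yields $\mu(U_{\alpha,k}\setminus F_n)=\mu(U_{\alpha,k})-\mu(F_n)\to 0$, so $F_n$ approximates $U_{\alpha,k}$ once $n$ is large; taking $n\ge l$ is free since $n\to\infty$. The main obstacle is precisely this estimate: one must know that the occurrences of $\alpha$ failing to be internal to a single expected $W_n$-block — those straddling block boundaries or lying in spacer regions — carry asymptotically negligible measure. The computation handles this automatically, as it shows the internal occurrences already account for the full mass $\mu(U_{\alpha,k})$ in the limit, but it rests on the identity $\mu(E_{n,0})=\mu(U_{0,0})/Z(W_n)$ and on the convergence of the occurrence-frequency of $\alpha$ in $W_n$ to $\mu_0(U_{\alpha,0})$.
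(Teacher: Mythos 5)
Your proof is correct, but it takes a genuinely different route from the paper's. The paper disposes of this lemma in a few lines by transferring to the geometric model: the canonical parameters attached to $(W_n)$ describe a cutting-and-stacking transformation $T$ on $[0,1]$ isomorphic to $(X,\mu,\sigma)$, under which $E_{n,i}$ corresponds to the level $T^{-i}(B_n)$ of the stage-$n$ tower, and the density of the tower levels is built into Definition~\ref{defgeo}; the lemma is thus essentially imported from the definition. You instead stay entirely in the symbolic setting and prove the density from scratch: you approximate each cylinder $U_{\alpha,k}$ from inside by the disjoint union $F_n=\bigcup_{s\in S}E_{n,k-s}$ over the internal occurrences of $\alpha$ in $W_n$, and the counting identity $\mu(F_n)=\frac{|S|}{Z(W_n)}\,\mu(U_{0,0})\to\mu(U_{\alpha,k})$ does all the work, automatically showing that boundary-straddling and spacer occurrences of $\alpha$ are negligible. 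What your argument buys is self-containedness (no appeal to the symbolic/geometric isomorphism) and explicit quantitative content; what the paper's buys is brevity, at the price of pushing the substance into the definition and the isomorphism. Two small remarks. First, the identity $\mu(E_{n,0})=\mu(U_{0,0})/Z(W_n)$ is cleanest as an exact partition rather than a frequency limit: since every $x$ is uniquely built from $W_n$, every $0$ of $x$ lies in a unique expected occurrence of $W_n$, so $U_{0,0}=\bigcup_{s:\,W_n(s)=0}E_{n,-s}$ is a disjoint union of $Z(W_n)$ sets of equal measure by shift-invariance; this avoids applying the frequency formula, which is stated for cylinders, to the non-cylinder set $E_{n,0}$. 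Second, you prove that the algebra generated by the $E_{n,i}$ is dense rather than that the collection itself is; the literal statement cannot hold since $\mu(E_{n,i})\to 0$ as $n\to\infty$, but your reading is the one the paper intends and uses, and in the application one extracts by pigeonhole, from your disjoint approximating union, a single $E_{n,i}$ with small relative error $\mu(E_{n,i}\setminus A)/\mu(E_{n,i})$, so no harm is done.
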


\begin{proof}
The canonical generating sequence $(W_n)$ gives rise to a pair of canonical cutting and spacer parameters, which in turn describes a cutting and stacking rank-one transformation $T$ on $[0,1]$. The measure preserving systems $([0,1],\lambda, T)$ and $(X,\mu,\sigma)$ are isomorphic. Under this isomorphism, each $E_{n,0}$ corresponds exactly to the set $B_n$, the base of the stage-$n$ Rokhlin tower in the construction of $T$. In general, $E_{n,i}=\sigma^{-i}(E_{n,0})$ corresponds to $T^{-i}(B_n)$. By Definition~\ref{defgeo}, $\bigcup_{n\geq l} \{B_n, T(B_n),\dots, T^{h_n-1}(B_n)\}$ is dense in the measure algebra of all measurable subsets of $[0,1]$. This implies that $\{ E_{n,i}\,:\, n\geq l, i\in\mathbb{Z}\}$ is dense in the measure algebra of all measurable subsets of $X$.
\end{proof}

\begin{lemma}
\label{propW_{n+2}}
If $x \in X$ has an occurrence of $W_{n+2}$ beginning at $i$, then $x$ has an expected occurrence of $W_n$ beginning at $i$.
\end{lemma}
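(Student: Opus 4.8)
The plan is to work entirely with the unique \emph{expected} decomposition of $x$ into copies of $W_n$. Since $W_n$ lies on the canonical generating sequence, \cite{GaoHill} (Proposition 2.31) writes $x$ uniquely as $\cdots W_n 1^{a_{-1}} W_n 1^{a_0} W_n \cdots$; let $P_n\subseteq\Z$ be the set of starting positions of the expected occurrences. What must be proved is precisely that $i\in P_n$. The first step is to locate $i$. Because $W_{n+2}$, like every word in $\mathcal F$, begins with $0$, we have $x(i)=0$, and every $0$ of $x$ lies inside a unique expected $W_n$-block, say the one beginning at $p\in P_n$ with $p\le i<p+|W_n|$. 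Thus it suffices to rule out $0<d:=i-p<|W_n|$, which then forces $i=p\in P_n$. (The case $n=0$ is trivial, since $W_0=0$ and every occurrence of $0$ is an expected occurrence, so I take $n\ge 1$.)

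So suppose $0<d<|W_n|$. The expected block $W_n$ at $p$ and the leading block $W_n$ of the given occurrence of $W_{n+2}$ at $i=p+d$ overlap on $[p+d,\,p+|W_n|)$, and reading that interval in the two ways shows that $W_n$ is periodic with period $d$, i.e. $W_n(j)=W_n(j-d)$ for $d\le j<|W_n|$. At this point one cannot simply invoke ``no unexpected occurrences'': canonical words may be genuinely periodic. Indeed, for the del Junco--Rudolph example earlier in this section one has $u_2=(010)^4$, which has period $3$ and does occur unexpectedly inside $u_2u_2$. The gain from the hypothesis being $W_{n+2}$, rather than a single $W_n$ or even $W_{n+1}$, is that the forward window $[i,\,i+|W_{n+2}|)$ reaches past the scale on which such a period can persist.

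The crux is therefore to produce, inside the forward window, a single cell whose value is forced to be $0$ by the expected parse and $1$ by the given occurrence of $W_{n+2}$ (or vice versa). The mechanism is a \emph{period-breaking spacer}: $W_{n+2}$ is built from $W_{n+1}$, which is built from $W_n$, so it carries spacer $1$'s coming from both the level-$n$ and the level-$(n+1)$ cutting, and by canonicity (this is where the comparability of $W_{n+1}$ with every word from which $V$ is built, Lemma~\ref{lemma:comparable}, together with Lemma~\ref{lemma:main}, enters) the period $d<|W_n|$ cannot survive through an entire $W_{n+1}$-block's worth of this structure. Concretely, I would track the symbols of $x$ to the right of $p$ using the expected parse (a copy of $W_n$ of period $d$, then its gap $1^{g_0}$, then the next expected block, and so on) and simultaneously using the internal parse supplied by $W_{n+2}$ (its blocks and gaps, shifted by $d$), and then locate the first offset at which the two prescriptions disagree. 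The period relation $W_n(j)=W_n(j-d)$ guarantees agreement up to the first spacer lying genuinely outside the overlap, and that spacer yields the clash, forcing $d=0$.

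I expect the main obstacle to be exactly this bookkeeping: pinning down the offset of the first period-breaking spacer in full generality, where $a_{n,1}$ may be $0$ (so that $W_{n+1}$ begins $W_nW_n$ and $d$ may divide $|W_n|$, letting the period slip across one block boundary), and where one must follow the alignment through possibly several blocks and gaps before the clash appears. Two auxiliary remarks should streamline the write-up. First, by the uniqueness in \cite{GaoHill} (Proposition 2.31), expanding the expected $W_{n+1}$-decomposition of $x$ into $W_n$-blocks reproduces the expected $W_n$-decomposition, so every expected $W_{n+1}$-start is an expected $W_n$-start; this lets me pass freely between the two scales. Second, only the forward window is needed, since the breaking spacer occurs at a bounded offset to the right of $i$, and no hypothesis about occurrences of $W_{n+2}$ to the left of $i$ is required.
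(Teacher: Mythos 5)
Your overall strategy --- superpose the expected $W_n$-parse of $x$ with the parse of the window $[i,i+|W_{n+2}|)$ supplied by the given occurrence of $W_{n+2}$, and let the canonicity of $W_{n+1}$ produce the contradiction --- is the same as the paper's, and your observation that mere aperiodicity of $W_n$ is not enough (canonical words can be periodic) is exactly right. But the proposal stops short of a proof at precisely the point where the content of the lemma lives: you reduce everything to ``locate the first offset at which the two prescriptions disagree'' and then concede that the ``main obstacle'' is this bookkeeping. That obstacle is not overcome, and the mechanism as you describe it is not even guaranteed to terminate in a clash: if the two parses were globally consistent there would be no disagreeing cell at all, and what canonicity actually forbids is not a local symbol value but the structural conclusion $W_n\prec_s W_{n+2}$. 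Relatedly, the canonicity input you cite (Lemmas~\ref{lemma:comparable} and~\ref{lemma:main}) is off target --- Lemma~\ref{lemma:main} runs in the opposite direction; what is needed is Definition~\ref{definition:canonical} itself: since $W_{n+1}$ lies on the canonical sequence and $W_n\prec W_{n+1}\prec W_{n+2}$, one cannot have $W_n\prec_s W_{n+2}$.

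The missing derivation, which is the whole point of the paper's proof, is carried out by counting zeros rather than chasing a period. Write $W_{n+2}=W_n1^{b_1}W_n\dots 1^{b_{s-1}}W_n$. The expected occurrences of $W_n$ partition the zeros of $x$ into consecutive groups of equal size, each expected occurrence beginning at the first zero of its group; hence if the occurrence of $W_n$ at $i$ is unexpected, the next expected occurrence begins at some $j$ with $i<j<i+|W_n|$, and stepping forward one group of zeros at a time inside the $W_{n+2}$-window shows that the successive expected occurrences begin at $j+|W_n|+b_1$, $j+2|W_n|+b_1+b_2$, and so on --- that is, at the fixed offset $j-i$ inside each of the $s$ blocks of the $W_{n+2}$-occurrence. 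A short overlap argument (each gap $1^{b_r}$ of the $W_{n+2}$-parse must read as all $1$s inside the expected copy of $W_n$ straddling it, while the $0$ opening the next block sits at offset $|W_n|-(j-i)+b_r$ of that same copy) then forces $b_1=\dots=b_{s-1}$, i.e.\ $W_n\prec_s W_{n+2}$, the desired contradiction. Your plan never isolates this step, and your worry about $a_{n,1}=0$ and the period ``slipping across block boundaries'' is a symptom of tracking positions rather than zeros; until this step (or an equivalent) is actually carried out, the argument has a genuine gap.
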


\begin{proof}
Assume that $x$ has an occurrence of $W_{n+2}$ beginning at $i$ and that the occurrence of $W_n$ beginning at $i$ is unexpected. Since $W_{n+2}$ is built from $W_n$, we assume 
$$ W_{n+2}=W_n1^{b_1}W_n1^{b_2}\dots 1^{b_{s-1}}W_n. $$
Since $x$ is built from $W_n$, there is a unique expected occurrence of $W_n$ in $x$ beginning at an $j$ with $i<j<i+|W_n|$. By keeping track of the number of 0s in $W_n$ we know that the next expected occurrence of $W_n$ in $x$ begins at $|W_n|+b_1+j$. Similarly, the following expected occurrence of $W_n$ in $x$ begins at $2|W_n|+b_1+b_2+j$, etc. It follows that $b_1=b_2=\dots=b_{s-1}$. This means that $W_n\prec_s W_{n+2}$. Since $W_n\prec W_{n+1}\prec W_{n+2}$, this is a contradiction to the assumption that $W_{n+1}$ is an element of the canonical generating sequence.
\end{proof}

\subsection{Trivial centralizer}
We give the proof of Theorem \ref{thmtrivialcentralizer} in this subsection. 

Let $(X, \mu, \sigma)$ be a symbolic rank-one system with canonical generating sequence $(W_n)$ and corresponding cutting and spacer parameters $(q_n)$ and $(a_{n,i})$.  Suppose both parameters are bounded and let $q_{\textnormal{max}}$ and $a_{\textnormal{max}}$ denote their respective maxima.  Fix $Q\geq q_{\textnormal{max}}, a_{\textnormal{max}}$. Let $\psi \in \textnormal{Aut}(X, \mu)$ commute with $\sigma$.  To prove the theorem we need to show that $\psi$ is an integral power of $\sigma$.

Fix $k \in \N$ so that $W_k$ contains $a_{\textnormal{max}}$ many consecutive 1s.  We first state a simple fact; its verification is left to the reader.

\begin{lemma}
\label{facts=t}
If $x \in X$ has occurrences of $W_k$ beginning at $i$ and $j$, then either $i = j$, or $|i-j| > a_{\textnormal{max}}$.
\end{lemma}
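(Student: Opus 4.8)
The plan is to prove Lemma~\ref{facts=t} by analyzing what it means for $W_k$ to occur at two nearby positions and deriving a contradiction from the structure of $W_k$. The key structural fact I would use is that $W_k$ was chosen (at the start of this subsection) so that it contains a run of $a_{\textnormal{max}}$ many consecutive $1$s. Since every $W_n$ for $n \ge 1$ begins and ends with $0$ (as all words in $\mathcal{F}$ do), and since the spacer parameter is bounded by $a_{\textnormal{max}}$, any block of consecutive $1$s that appears \emph{between} copies of $W_{k-1}$ inside $W_k$ has length at most $a_{\textnormal{max}}$. So the run of exactly $a_{\textnormal{max}}$ consecutive $1$s inside $W_k$ is, in a suitable sense, a maximal run.

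First I would set up the contradiction: suppose $x$ has occurrences of $W_k$ beginning at positions $i$ and $j$ with $i \ne j$ and $0 < |i - j| \le a_{\textnormal{max}}$; without loss of generality take $i < j$, so the two occurrences overlap substantially (since $|W_k| \gg a_{\textnormal{max}}$). The central idea is to compare the symbols of $x$ against the two shifted copies of $W_k$. Inside $x$, the window $[i, i+|W_k|)$ reads off $W_k$, and the window $[j, j+|W_k|)$ also reads off $W_k$; where these windows overlap, the word $W_k$ must agree with a shift of itself by $d := j - i$ with $0 < d \le a_{\textnormal{max}}$. The hard part will be turning this self-overlap condition into a contradiction with the location of the distinguished run of $a_{\textnormal{max}}$ consecutive $1$s. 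The natural approach is to locate that run inside the first copy and track where it lands under the shift: since the run has length $a_{\textnormal{max}} \ge d$, the run overlaps its own $d$-shift, and the agreement forces either a longer run of $1$s than $a_{\textnormal{max}}$ (impossible, as $a_{\textnormal{max}}$ bounds all spacer gaps and the run cannot be extended at either end, being bordered by the $0$s of the surrounding $W_{k-1}$ copies) or a misalignment with a $0$, giving the contradiction.

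More concretely, I would argue as follows. Let $p$ be the starting position within $W_k$ of the distinguished run, so $W_k$ reads $0\,1^{a_{\textnormal{max}}}\,0$ across positions $p-1,\dots,p+a_{\textnormal{max}}$ (the run is flanked by $0$s). In $x$, this run appears at absolute positions $[i+p,\, i+p+a_{\textnormal{max}})$. Under the second occurrence of $W_k$ beginning at $j = i + d$, the symbol at each absolute position $i+p+t$ (for $0 \le t < a_{\textnormal{max}}$) equals $W_k$ evaluated at $(p+t) - d = p + (t-d)$. Comparing the two descriptions of $x$ on the overlap, I would show that the forced pattern of $1$s either spills past the boundary $0$ of the run in one copy — contradicting that in the other copy that position is still a $1$ — or, more cleanly, that the $0$ immediately before the run in one occurrence must coincide with a $1$ from the interior of the run in the other occurrence. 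Either way a single symbol of $x$ is forced to be both $0$ and $1$, which is the contradiction. The main obstacle I anticipate is bookkeeping the index arithmetic carefully enough to guarantee the relevant positions genuinely lie inside the overlap of the two windows (this uses $a_{\textnormal{max}} < |W_k|$ and $d \le a_{\textnormal{max}}$) and to verify that the run is indeed maximal in $W_k$, i.e.\ bordered by $0$s on both sides — which follows because consecutive $1$s in any $W_n$ arise only from spacer insertions $1^{a_{n',i}}$ with $a_{n',i} \le a_{\textnormal{max}}$, sandwiched between copies of $W_{n'}$ that start and end with $0$.

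Since the paper explicitly leaves the verification to the reader, I would keep the write-up brief, emphasizing only that the choice of $W_k$ to contain a maximal run of exactly $a_{\textnormal{max}}$ consecutive $1$s is what rules out any shift by an amount in the range $1 \le d \le a_{\textnormal{max}}$; any genuine second occurrence of $W_k$ must therefore be displaced by more than $a_{\textnormal{max}}$, which is precisely the claim $|i-j| > a_{\textnormal{max}}$.
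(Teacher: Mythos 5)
Your proof is correct and supplies exactly the verification the paper leaves to the reader: the distinguished block of $a_{\textnormal{max}}$ consecutive $1$s in $W_k$ is a maximal run flanked by $0$s (since every maximal run of $1$s in any $W_n$ is a spacer block $1^{a_{m,i}}$ with $a_{m,i}\le a_{\textnormal{max}}$), so a self-overlap of $W_k$ at shift $d$ with $1\le d\le a_{\textnormal{max}}$ forces the $0$ at position $p-1$ of one copy to coincide with the $1$ at position $p+d-1$ of the other, a contradiction. The index checks you flag all go through because $p\ge 1$ and $p+a_{\textnormal{max}}<|W_k|$, so nothing further is needed.
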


By Lemma~\ref{lemma:Enidensity}, we can find $l \geq k$ and $z \in \Z$ so that $$\frac{\mu [E_{l,z} \setminus \psi^{-1} (E_{k,0})]}{\mu [E_{l,z}]}  < \frac{1}{2Q^3 + 1}.$$
Fix such $l\geq k$ and $z\in \Z$.

Let $\phi = \psi \circ \sigma^{-z}$.  Note that $\phi \in \textnormal{Aut} (X, \mu)$ commutes with $\sigma$ and that $\phi$ is a power of $\sigma$ iff $\psi$ is a power of $\sigma$.    Thus, to prove the theorem it suffices to show that $\phi$ is an integral power of $\sigma$.  

It follows from the invariance of $\mu$ under $\sigma$ that $$\frac{\mu [E_{l,0} \setminus \phi^{-1} (E_{k,0})]}{\mu [E_{l,0}]} < \frac{1}{2Q^3 + 1}.$$

Note that $x \in E_{l,0} \cap \phi^{-1} (E_{k,0})$ iff $x$ has an expected occurrence of $W_l$ beginning at 0 and $\phi (x)$ has an expected occurrence of $W_k$ beginning at 0.  More generally, since $\phi$ commutes with $\sigma$, we have the following for all $i \in \Z$:  $\sigma^i (x) \in E_{l,0} \setminus \phi^{-1} (E_{k,0})$ iff $x$ has an expected occurrence of $W_l$ beginning at $i$ and $\phi (x)$ has an expected occurrence of $W_k$ beginning at $i$.  This prompts the following definition.

\begin{definition}  An expected occurrence of $W_l$ beginning at $i$ in $x \in X$ is {\em good} if $\phi(x)$ has an expected occurrence of $W_k$ beginning at $i$.  Otherwise it is {\em bad}.
\end{definition}

\begin{proposition}
\label{propmuae}
For $\mu$ almost every $x \in X$, exactly one of the following holds.
\begin{enumerate}
\item  Every expected occurrence of $W_l$ is good.
\item  For some $d>0$, there is a string of $d$ many bad occurrences of $W_l$ that is both preceded by a string of $2Q^3 d$ good occurrences of $W_l$ and followed by a good occurrence of $W_l$.
\end{enumerate}
\end{proposition}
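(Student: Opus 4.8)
The plan is to read the dichotomy off the pointwise ergodic theorem, reducing alternative (2) to a clean counting statement about the bi-infinite sequence of good/bad labels attached to the expected occurrences of $W_l$ in a typical point. Write $\rho = \mu[E_{l,0}]$ and $\beta = \mu[E_{l,0}\setminus \phi^{-1}(E_{k,0})]$, put $C = 2Q^3+1$, and recall that the chosen $l,z$ give $\beta/\rho < \tfrac{1}{C}$. Since $x$ has a bad expected occurrence of $W_l$ beginning at $i$ exactly when $\sigma^i(x)\in E_{l,0}\setminus\phi^{-1}(E_{k,0})$, the first thing I would do is split on whether $\beta = 0$ or $\beta > 0$. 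If $\beta = 0$ then $\bigcup_{i\in\Z}\sigma^{-i}(E_{l,0}\setminus\phi^{-1}(E_{k,0}))$ is null, so for a.e. $x$ no expected occurrence is bad; this is (1), and (2) fails since it requires a bad occurrence. If $\beta > 0$, then by ergodicity of $\mu$ almost every $x$ meets the bad set infinitely often in both directions, hence has infinitely many bad occurrences and, since the good density $1-\beta/\rho$ is positive, infinitely many good occurrences in both directions. In particular every maximal run of bad occurrences is finite and flanked by good occurrences on both sides, so the ``followed by a good occurrence'' clause of (2) is automatic and (1) fails. It remains to produce, for a.e. such $x$, one maximal bad run of length $d$ that is immediately preceded by at least $2Q^3 d = (C-1)d$ good occurrences.

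For this I would invoke the Birkhoff pointwise ergodic theorem to control densities. Counting occurrences by starting position, for a.e. $x$ the proportion of expected occurrences of $W_l$ that are bad converges to $\beta/\rho$; because the spacer parameter is bounded by $Q$, consecutive expected occurrences of $W_l$ sit a bounded distance apart, so this proportion may equally be computed by occurrence-index and converges (not merely as a $\liminf$) to $\delta \defeq \beta/\rho < \tfrac{1}{C}$. Now suppose toward a contradiction that (2) fails, i.e. every maximal bad run $R$, of length $d_R$, is immediately preceded by a maximal good run of length strictly less than $(C-1)d_R$. Since good and bad runs strictly alternate, I would pair each bad run with the good run immediately preceding it; over any window of the form $G_1R_1G_2R_2\cdots G_mR_m$ this gives $\#\text{good} = \sum_i |G_i| < (C-1)\sum_i |R_i| = (C-1)\,\#\text{bad}$, so the bad proportion on that window exceeds $\tfrac{1}{C}$. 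Letting $m\to\infty$ forces $\delta \geq \tfrac{1}{C}$, contradicting $\delta < \tfrac{1}{C}$. Hence some maximal bad run of length $d$ is preceded by at least $(C-1)d$ good occurrences, which is exactly alternative (2).

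The step I expect to be most delicate is the appeal to genuine \emph{convergence} of the bad-density rather than a one-sided bound. A mere $\liminf$ estimate does not suffice: one can write down bi-infinite label sequences whose bad runs and preceding gaps grow geometrically, for which the bad-density oscillates with $\liminf$ below $\tfrac1C$, yet every maximal bad run is preceded by just slightly too few good occurrences, so that (2) fails. What excludes such configurations is precisely that the ergodic theorem produces a genuine limit for a.e. $x$; it is the convergence, applied along the specially aligned windows $G_1R_1\cdots G_mR_m$, that yields the contradiction. I would therefore first state the density convergence for a.e. $x$, transfer it from position-indexing to occurrence-indexing using the boundedness of the parameters, and only then run the pairing count. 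The mutual exclusivity of (1) and (2), and the two-sided flanking of each maximal bad run by good occurrences, are then immediate, giving ``exactly one'' of the two alternatives for a.e. $x$.
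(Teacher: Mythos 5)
Your proof is correct and follows essentially the same route as the paper: apply the Birkhoff ergodic theorem to bound the asymptotic density of bad occurrences of $W_l$ by $\tfrac{1}{2Q^3+1}$, then deduce via a counting argument on maximal runs that some bad run of length $d$ must be preceded by $2Q^3 d$ good occurrences. The paper leaves the run-pairing count and the mutual exclusivity of (1) and (2) implicit, whereas you spell them out (and correctly flag that genuine convergence of the density, not just a $\liminf$ bound, is what makes the pairing argument close); this is a welcome elaboration rather than a different method.
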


\begin{proof}
By the ergodic theorem, for $\mu$ almost every $x \in X$, both of the following hold:

\begin{enumerate}
\item[(a)]  $\displaystyle \lim_{N \rightarrow \infty} \frac{|\{i \in [0,N] : \textnormal{ $x$ has a bad occ. of $W_l$ at $i$}\}|}{|\{i \in [0,N] : \textnormal{ $x$ has an exp. occ. of $W_l$ at $i$}\}|} <  \frac{1}{2Q^3 + 1}$

\item[(b)]  $\displaystyle \lim_{N \rightarrow -\infty} \frac{|\{i \in [N,0] : \textnormal{ $x$ has a bad occ. of $W_l$ at $i$}\}|}{|\{i \in [0,N] : \textnormal{ $x$ has an exp. occ. of $W_l$ at $i$}\}|} < \frac{1}{2Q^3 + 1}$
\end{enumerate}

For such an $x \in X$, every bad occurrence of $W_l$ is contained in a maximal string of bad occurrences of $W_l$ (one that is immediately preceded by a good occurrence of $W_l$ and immediately followed by a good occurrence of $W_l$).  If $x$ does have a maximal string of good occurrences of $W_n$, then (b) guarantees that some maximal string of bad occurrences of $W_l$ (say the string consists of $d$ bad occurrences of $W_l$) is immediately preceded by a string of $2Q^3d$ good occurrences of $W_l$. 
\end{proof}

In the analysis that follows we will show that condition (1) in the statement of the previous proposition implies that $\phi (x)$ is a shift of $x$ (see Proposition \ref{propallgood}).  We will also show that condition (2) in the statement of the previous proposition never happens (see Proposition \ref{propcontradiction}).  It will then follow easily from the ergodicity of $\sigma$ that $\phi$ is an integral power of $\sigma$.

There is a notion that will be repeatedly used in this analysis and that we will now introduce: $r_{x, i}$.  If $x \in X$ has a good occurrence of $W_l$ beginning at $i$, then $\phi (x)$ has an expected occurrence of $W_k$ beginning at $i$.  This expected occurrence of $W_k$ is completely contained in an expected occurrence of $W_l$.  Thus, there is a unique $r_{x, i}$ satisfying $0 \leq r_{x,i} \leq |W_l| - |W_k|$ such that $\phi (x)$ has an expected occurrence of $W_l$ beginning at $i - r_{x,i}$.

\begin{figure}[h]
\begin{center}
\setlength{\unitlength}{.1cm}
\begin{picture}(120,28)(0,0)

\put(0,-8){
\put(10,17){\line(1,0){110}}
\put(10,30){\line(1,0){110}}
\put(4,15){\makebox(0,0)[b]{$\phi(x)$}}
\put(4,28){\makebox(0,0)[b]{$x$}}
\put(40,19){\makebox(0,0)[b]{$i-r_{x,i}$}}
\put(63,19){\makebox(0,0)[b]{$W_k$}}
\put(84,31){\makebox(0,0)[b]{$W_l$}}
\put(60,6){\makebox(0,0)[b]{$W_l$}}
\put(58,32.5){\makebox(0,0)[b]{$i$}}
\put(35,12){\line(1,0){50}}
\put(35,12){\line(0,1){1}}
\put(85,12){\line(0,1){1}}
\put(35,15.5){\line(0,1){3}}
\put(58,16){\line(0,1){2}}
\put(68,16){\line(0,1){2}}
\put(58,28.5){\line(0,1){3}}
\put(108,28.5){\line(0,1){3}}
\put(85,15.5){\line(0,1){3}}
\multiput(58,27)(0,-1.5){6}{\line(0,-1){0.5}}
}
\end{picture}
\end{center}
\end{figure}

\begin{lemma}
\label{lemmabasic1}
Suppose $x \in X$ has an occurrence of $W_l 1^s W_l$ beginning at $i$ and the $W_l$ beginning at $i$ in $x$ is good 
(hence, $\phi(x)$ has an expected occurrence of $W_l$ beginning at $i - r_{x,i}$).  
Let $t \in \N$ be such that $\phi (x)$ has an occurrence of $W_l 1^t W_l$ beginning at $i- r_{x,i}$.  The following are equivalent:
\begin{enumerate}
\item $s = t$;
\item the occurrence of $W_l$ beginning at $i + |W_l| + s$ in $x$ is good.
\end{enumerate}
\end{lemma}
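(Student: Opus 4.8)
The plan is to reduce the stated equivalence to a single counting fact about the relative positions at which $W_k$ sits inside $W_l$. First I would record the geometric data. Writing $r=r_{x,i}$, goodness of the occurrence of $W_l$ at $i$ means that $\phi(x)$ has an expected occurrence of $W_k$ at $i$, sitting inside the expected occurrence of $W_l$ at $i-r$; thus $r$ is precisely the relative position of this $W_k$ inside that copy of $W_l$. Since $W_l$ lies on the canonical generating sequence, $\phi(x)$ is built from $W_l$, so the copy of $W_l$ beginning at $i-r$ is immediately followed by a consecutive expected copy of $W_l$, and $t$ is exactly the number of intervening $1$'s, i.e.\ the gap to the next expected $W_l$ in $\phi(x)$, which begins at $i-r+|W_l|+t$. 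An entirely parallel observation for $x$ shows that the occurrence $W_l1^sW_l$ at $i$ forces the copy of $W_l$ at $i+|W_l|+s$ to be expected and consecutive to the one at $i$. Because the spacer parameter is bounded by $a_{\textnormal{max}}$, every gap between consecutive expected copies of $W_l$ equals a single spacer value, so both $s\le a_{\textnormal{max}}$ and $t\le a_{\textnormal{max}}$.

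Next I would reformulate goodness of the second occurrence. By definition, the occurrence of $W_l$ at $i+|W_l|+s$ in $x$ is good iff $\phi(x)$ has an expected occurrence of $W_k$ at $i+|W_l|+s$. Since $\phi(x)$ is built from $W_l$, its expected occurrences of $W_k$ are exactly those appearing at the fixed relative positions $\rho_1<\dots<\rho_M$ (the positions at which $W_k$ occurs inside $W_l$) within each expected copy of $W_l$; note $r=\rho_a$ for some $a$. The displacement of $i+|W_l|+s$ from the expected copy of $W_l$ beginning at $i-r+|W_l|+t$ is $(i+|W_l|+s)-(i-r+|W_l|+t)=\rho_a+(s-t)$. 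Using the bounds $s,t\le a_{\textnormal{max}}$ together with $\rho_c\le|W_l|-|W_k|$ and $a_{\textnormal{max}}<|W_k|$, I would check that no copy of $W_l$ in $\phi(x)$ other than the one beginning at $i-r+|W_l|+t$ can contribute an expected $W_k$ at position $i+|W_l|+s$. Consequently the second occurrence is good iff $\rho_a+(s-t)$ is itself one of the relative positions $\rho_c$.

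The crux is then the numeric claim that $\rho_a+(s-t)$ is a relative position iff $s=t$. For this I invoke Lemma~\ref{facts=t}: any two occurrences of $W_k$ in a point of $X$ are either equal or more than $a_{\textnormal{max}}$ apart, so in particular the distinct relative positions $\rho_1,\dots,\rho_M$ of $W_k$ inside $W_l$ are pairwise more than $a_{\textnormal{max}}$ apart. Since $|s-t|\le a_{\textnormal{max}}$, the number $\rho_a+(s-t)$ lies within $a_{\textnormal{max}}$ of $\rho_a$, and hence can equal some $\rho_c$ only if it equals $\rho_a$ itself, that is, only if $s=t$; conversely $s=t$ returns $\rho_a$. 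Combining this with the previous paragraph yields the desired equivalence.

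I expect the main obstacle to be the localization step in the middle paragraph: ruling out that the expected $W_k$ witnessing goodness of the second occurrence comes from an adjacent copy of $W_l$ in $\phi(x)$ rather than from the copy beginning at $i-r+|W_l|+t$. This is exactly where the smallness of the gaps ($s,t\le a_{\textnormal{max}}$) relative to the size of $W_k$ (via $a_{\textnormal{max}}<|W_k|\le|W_l|$) must be used carefully; once this bookkeeping is settled, the equivalence follows immediately from the spacing fact supplied by Lemma~\ref{facts=t}.
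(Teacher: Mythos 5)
Your argument is correct and rests on the same two ingredients as the paper's proof: the fact that each expected occurrence of $W_l$ in $\phi(x)$ carries its expected occurrences of $W_k$ at the same relative positions, and Lemma~\ref{facts=t} combined with the bounds $0 \leq s, t \leq a_{\textnormal{max}}$. Where you diverge is precisely the step you flag as the main obstacle: localizing the witnessing expected $W_k$ to a particular copy of $W_l$ in $\phi(x)$ and then comparing relative positions $\rho_a + (s-t)$ and $\rho_c$. The paper makes that step unnecessary by working with absolute positions throughout: the expected $W_l$ beginning at $i - r_{x,i} + |W_l| + t$ contains an expected $W_k$ at relative position $r_{x,i}$, hence at absolute position $i + |W_l| + t$; goodness of the second occurrence supplies an expected $W_k$ at absolute position $i + |W_l| + s$; and Lemma~\ref{facts=t} applied directly to these two occurrences in $\phi(x)$ forces $s = t$, since $|s - t| \leq a_{\textnormal{max}}$ rules out the alternative. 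This sidesteps all bookkeeping about which copy of $W_l$ the second $W_k$ lives in, including the boundary case $r_{x,i} + s - t < 0$ (where the position lands in a spacer block and your relative-position framing needs a separate remark to conclude that no expected $W_k$ can occur there). So the proposal is sound, but your middle paragraph can be replaced by the direct two-position comparison, which is both shorter and less error-prone.
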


\begin{proof}
We know that $\phi (x)$ has an expected occurrence of $W_k$ that begins at $i$ and is completely contained in the expected occurrence of $W_l$ beginning at $i - r_{x,i}$.  The expected occurrence of $W_l$ beginning at $i - r_{x,i} + |W_l| + t$ in $\phi(x)$ must have an expected occurrence of $W_k$ at the same relative location, i.e., beginning at $i + |W_l| + t$.

If $s=t$, then $\phi (x)$ has an expected occurrence of $W_k$ beginning at $i + |W_l| + s$ and thus, the expected occurrence of $W_l$ beginning at $i + |W_l| + s$ in $x$ is good.

On the other hand, suppose the occurrence of $W_l$ beginning at $i + |W_l| + s$ in $x$ is good.  Then $\phi(x)$ has expected occurrences of $W_k$ beginning at $i + |W_l| + s$ and at $i + |W_l| + t$.  By Lemma \ref{facts=t}, either $s=t$ or $|s-t| >a_{\textnormal{max}}$.  Since $0 \leq s,t \leq a_{\textnormal{max}}$, it cannot be the case that $|s-t| >a_{\textnormal{max}}$.  Thus, $s=t$.
\end{proof}

What follows is a nearly identical lemma; we leave its verification to the reader.

\begin{lemma}
\label{lemmabasic2}
Suppose $x \in X$ has an occurrence of $W_l 1^s W_l$ beginning at $j - |W_l| - s$ and the $W_l$ beginning at $j$ in $x$ is good (hence, $\phi (x)$ as an expected occurrence of $W_l$ beginning at $j - r_{x,j}$).  Let $t \in \N$ be such that $\phi (x)$ has an occurrence of $W_l 1^t W_l$ beginning at $j - r_{x,j} - |W_l| - t$.  Then the following are equivalent.
\begin{enumerate}
\item $s = t$;
\item the occurrence of $W_l$ beginning at $j - |W_l| - s$ in $x$ is good.
\end{enumerate}
\end{lemma}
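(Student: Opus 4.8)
The plan is to run the proof of Lemma~\ref{lemmabasic1} essentially verbatim, but looking to the \emph{left} instead of to the right, so that every ``following'' occurrence of $W_l$ in that argument becomes a ``preceding'' one here. The hypothesis that the occurrence of $W_l$ beginning at $j$ in $x$ is good means precisely that $\phi(x)$ has an expected occurrence of $W_k$ beginning at $j$, and this $W_k$ lies entirely inside the expected occurrence of $W_l$ in $\phi(x)$ that begins at $j - r_{x,j}$. In particular, within that $W_l$ the $W_k$ sits at relative offset $r_{x,j}$. Since $W_l$ is built from $W_k$, every expected occurrence of $W_l$ in $\phi(x)$ carries an expected occurrence of $W_k$ at this same relative offset.

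First I would identify the relevant $W_k$ to the left. The hypothesis on $t$ says that $\phi(x)$ has an occurrence of $W_l 1^t W_l$ beginning at $j - r_{x,j} - |W_l| - t$, whose second block is exactly the expected occurrence of $W_l$ at $j - r_{x,j}$; since $\phi(x)$ is built from $W_l$, its first block is then the immediately preceding expected occurrence of $W_l$, beginning at $j - r_{x,j} - |W_l| - t$. Applying the offset observation from the previous paragraph to this block, $\phi(x)$ has an expected occurrence of $W_k$ beginning at $(j - r_{x,j} - |W_l| - t) + r_{x,j} = j - |W_l| - t$. This single index computation is the only place where the present lemma genuinely differs from Lemma~\ref{lemmabasic1}, and it is where I would be most careful, since $s$ and $t$ enter the two starting positions ($j - |W_l| - s$ in $x$ and $j - r_{x,j} - |W_l| - t$ in $\phi(x)$) asymmetrically.

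With the $W_k$ at $j - |W_l| - t$ in hand, both directions are immediate. If $s = t$, then this $W_k$ begins at $j - |W_l| - s$, which witnesses that the occurrence of $W_l$ at $j - |W_l| - s$ in $x$ is good. Conversely, if that occurrence is good, then $\phi(x)$ has expected occurrences of $W_k$ beginning at both $j - |W_l| - s$ and $j - |W_l| - t$; by Lemma~\ref{facts=t} these two starting positions either coincide, giving $s = t$, or differ by more than $a_{\textnormal{max}}$, which is impossible since $0 \leq s, t \leq a_{\textnormal{max}}$. The only obstacle is the leftward-offset bookkeeping just described; there is no new conceptual ingredient beyond what Lemma~\ref{lemmabasic1} already supplies.
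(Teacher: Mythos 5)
Your argument is correct and is exactly the intended one: the paper itself gives no proof of this lemma, remarking only that it is ``nearly identical'' to Lemma~\ref{lemmabasic1} and leaving the verification to the reader, and your proof is precisely that mirror-image argument. The one genuinely new step --- locating the expected $W_k$ inside the \emph{preceding} expected $W_l$ of $\phi(x)$ at position $(j - r_{x,j} - |W_l| - t) + r_{x,j} = j - |W_l| - t$ --- is handled correctly, and the rest (the $s=t$ direction and the appeal to Lemma~\ref{facts=t} with $0 \leq s,t \leq a_{\textnormal{max}}$ for the converse) matches the paper's proof of Lemma~\ref{lemmabasic1} verbatim.
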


\begin{proposition}
\label{propallgood}
If $x \in X$ is such that every expected occurrence of $W_l$ is good, then $\phi (x) $ is a shift of $x$.
\end{proposition}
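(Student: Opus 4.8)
The plan is to use the fact (Proposition 2.31 of \cite{GaoHill}) that $x$, being in $X$, is uniquely built from $W_l$. Write the expected occurrences of $W_l$ in $x$ as a bi-infinite increasing sequence of starting positions $(p_n)_{n\in\Z}$, and let $s_n\geq 0$ be the number of $1$s between the occurrences at $p_n$ and $p_{n+1}$, so that $p_{n+1}=p_n+|W_l|+s_n$. By hypothesis every one of these occurrences is good, so $r_{x,p_n}$ is defined for every $n$, and $\phi(x)$ has an expected occurrence of $W_l$ beginning at $p_n-r_{x,p_n}$.

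The heart of the argument is to show that $r_{x,p_n}$ does not depend on $n$. First I would apply Lemma~\ref{lemmabasic1} to consecutive occurrences: taking $i=p_n$ and $s=s_n$, the occurrence at $p_n$ is good and the occurrence at $p_n+|W_l|+s_n=p_{n+1}$ is good, so the lemma forces $s_n=t_n$, where $t_n$ is the number of $1$s following the expected $W_l$ at $p_n-r_{x,p_n}$ in $\phi(x)$. Consequently the next expected $W_l$ in $\phi(x)$ begins at $p_n-r_{x,p_n}+|W_l|+t_n=p_{n+1}-r_{x,p_n}$, and (as in the proof of Lemma~\ref{lemmabasic1}) it contains an expected occurrence of $W_k$ beginning at $p_{n+1}$. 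By the uniqueness of the expected $W_l$ containing that $W_k$, this gives $r_{x,p_{n+1}}=r_{x,p_n}$. Iterating in both directions over $\Z$ shows that $r_{x,p_n}$ equals a single constant $r:=r_{x,p_0}$, and that $s_n=t_n$ for every $n$.

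With this in hand the conclusion is immediate. The computation above shows that $\phi(x)$ has expected occurrences of $W_l$ precisely at the positions $p_n-r$, consecutively, with exactly $s_n$ ones between the occurrences at $p_n-r$ and $p_{n+1}-r$. This is exactly the decomposition of $\sigma^{r}(x)$ into blocks of $W_l$ separated by spacers, since $\sigma^r(x)$ has $W_l$ beginning at $p_n-r$ with the same interleaving runs $1^{s_n}$. Since an element of $X$ is determined by its unique decomposition as a bi-infinite string of $W_l$'s separated by runs of $1$s, we conclude $\phi(x)=\sigma^{r}(x)$, so $\phi(x)$ is a shift of $x$.

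I would expect the main obstacle to be the bookkeeping in the middle step: one must check both that the spacers of $\phi(x)$ match those of $x$ and that the displacement $r_{x,p_n}$ is genuinely constant, and in particular that the expected $W_l$-occurrences of $\phi(x)$ produced at the positions $p_n-r$ are consecutive and exhaust all of them, so that no expected occurrence of $\phi(x)$ is left unaccounted for. This is precisely what lets one pass from ``$\phi(x)$ and $\sigma^r(x)$ have the same $W_l$-skeleton'' to ``$\phi(x)=\sigma^r(x)$.'' Lemma~\ref{lemmabasic2} supplies the symmetric backward step, should one prefer to propagate the equality $r_{x,p_n}=r_{x,p_{n+1}}$ in the negative direction directly rather than by re-applying Lemma~\ref{lemmabasic1}.
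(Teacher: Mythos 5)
Your proposal is correct and follows essentially the same route as the paper, which simply states that repeated applications of Lemmas \ref{lemmabasic1} and \ref{lemmabasic2} yield $\phi(x)=\sigma^{r_{x,i}}(x)$; your write-up just makes explicit the bookkeeping (constancy of $r_{x,p_n}$, matching spacers, and consecutiveness of the expected occurrences in $\phi(x)$) that the paper leaves to the reader.
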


\begin{proof}
Suppose $x \in X$ is such that every expected occurrence of $W_l$ is good.  Choose $i\in \Z$ so that $x$ has an expected occurrence of $W_l$ at $i$.  We know that $\phi (x)$ has an expected occurrence of $W_l$ beginning at $i-r_{x,i}$.  Repeated applications of Lemmas \ref{lemmabasic1} and \ref{lemmabasic2} show that $\phi (x) = \sigma ^{r_{x,i}} (x)$.
\end{proof}

 \begin{definition}  For $n > l$, an expected occurrence of $W_{n}$ beginning at $i$ in $x \in X$ is  {\em totally good} if each expected occurrence of $W_l$ that is contained in the interval $[i, i + |W_n|)$ is good.
 
For $n > l$, an expected occurrence of $W_n$ beginning at $i$ in $x$ is  {\em totally bad} if each expected occurrence $W_l$ that is contained in the interval $[i, i + |W_n|)$ is bad.
 \end{definition}

If $x\in X$ has a totally good occurrence of $W_n$ beginning at $i$, then this occurrence of $W_n$ is expected. Moreover, repeated application of Lemma \ref{lemmabasic1} implies that $\phi(x)$ has an occurrence of $W_n$ beginning at $i-r_{x,i}$. In general, this occurrence need not be expected.

\begin{lemma}
\label{lemmaadvanced}
Let $n > l$ and suppose $x \in X$ has an occurrence of $W_{n} 1^s W_{n}$ beginning at $i$ and the $W_{n}$ beginning at $i$ in $x$ is totally good. Suppose further that $\phi(x)$ has an expected occurrence of $W_{n}$ beginning at $i - r_{x,i}$.  Let $t \in \N$ be such that $\phi (x)$ as an occurrence of $W_{n} 1^t W_{n}$ beginning at $i - r_{x,i}$.  
\begin{enumerate}
\item  If $s=t$, then the occurrence of $W_{n}$ beginning at $i + |W_{n}| + s$ is totally good.
\item  If $s \neq t$, then the occurrence of $W_{n}$ beginning at $i + |W_{n}| + s$ is totally bad.
\end{enumerate}
\end{lemma}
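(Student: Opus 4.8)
The plan is to reduce everything to the basic transition Lemmas~\ref{lemmabasic1} and \ref{lemmabasic2} applied to the individual copies of $W_l$ inside each $W_n$, and then to handle the ``totally bad'' conclusion separately by a rigidity argument on the positions of expected occurrences. First I would fix notation. Write $W_n = W_l 1^{b_1} W_l 1^{b_2} \cdots 1^{b_{p-1}} W_l$, where $p = q_l q_{l+1} \cdots q_{n-1}$ and each $b_m \leq a_{\textnormal{max}}$. Inside the first (totally good) occurrence of $W_n$ in $x$, let $i = i_1 < i_2 < \cdots < i_p$ be the positions of the expected occurrences of $W_l$, and let $j_1 < \cdots < j_p$ be the corresponding positions inside the second $W_n$, so that $j_1 = i + |W_n| + s$ and $j_{m+1} - j_m = i_{m+1} - i_m = |W_l| + b_m$. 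Since $\phi(x)$ has $W_n 1^t W_n$ beginning at $i - r_{x,i}$, let $i'_1 < \cdots < i'_p$ and $j'_1 < \cdots < j'_p$ be the positions of the $W_l$'s inside its two copies of $W_n$, with $i'_1 = i - r_{x,i}$ and $j'_1 = i - r_{x,i} + |W_n| + t$. A short preliminary observation is that $s, t \leq a_{\textnormal{max}}$ and that both second copies of $W_n$ are in fact expected: since only $1$'s separate two consecutive occurrences of $W_n$, these occurrences must sit at consecutive expected positions.

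Next I would establish the alignment of the first block. Starting from the definition of $r_{x,i}$ (so that $i_1$ is good and maps to the expected $W_l$ at $i'_1$), repeated application of Lemma~\ref{lemmabasic1} --- using that the spacer following $W_l$ is $b_m$ in both $x$ and $\phi(x)$, so that the ``$s=t$'' hypothesis of that lemma holds at every step --- shows that each $i_m$ is good, that $r_{x,i_m} = r_{x,i}$, and that the image of the $W_l$ at $i_m$ is the expected $W_l$ at $i'_m = i_m - r_{x,i}$. Applying Lemma~\ref{lemmabasic1} once more at the last $W_l$ of the first block, where $x$ reads $W_l 1^s W_l$ at $i_p$ and $\phi(x)$ reads $W_l 1^t W_l$ at $i'_p$, then tells me that $j_1$ is good precisely when $s = t$. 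For part (1), assuming $s = t$, this gives that $j_1$ is good with $r_{x,j_1} = r_{x,i}$ and image $j'_1$; since the second copy of $W_n$ in $\phi(x)$ is expected and has the same internal spacers $b_1, \dots, b_{p-1}$, I can repeat verbatim the propagation used above for the first block to conclude that every $j_m$ is good, so the occurrence of $W_n$ at $j_1$ is totally good.

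The crux is part (2), and I expect the ``totally bad'' conclusion to be the main obstacle, because the basic lemmas propagate \emph{goodness} forward and backward but offer no direct way to propagate \emph{badness}. Here I would abandon propagation and argue by position rigidity. Observe that $j_m - j'_m = r_{x,i} + (s-t) =: \rho$ for every $m$, and that $\rho \neq r_{x,i}$ with $|\rho - r_{x,i}| = |s - t| \leq a_{\textnormal{max}} < |W_k|$, the last inequality holding because $W_k$ contains $a_{\textnormal{max}}$ consecutive $1$'s (and begins and ends with $0$). Since $i_1$ is good, $r_{x,i}$ is the offset of an expected occurrence of $W_k$ inside an expected $W_l$; but within a single $W_l$ consecutive expected occurrences of $W_k$ are at least $|W_k|$ apart, while distinct expected occurrences of $W_l$ in $\phi(x)$ are at least $|W_l|$ apart. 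A direct estimate then shows that, for each $c$, the only expected $W_l$ of $\phi(x)$ that could contain an expected $W_k$ at position $j_c$ is the one at $j'_c$, and such an occurrence would force $\rho$ to be itself an expected $W_k$-offset inside $W_l$; since $0 < |\rho - r_{x,i}| < |W_k|$ and $r_{x,i}$ is already such an offset, this is impossible. Hence $\phi(x)$ has no expected occurrence of $W_k$ at any $j_c$, every $j_c$ is bad, and the occurrence of $W_n$ at $j_1$ is totally bad.

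The only points needing genuine care are the boundary estimates that rule out the neighboring expected copies of $W_l$ at $j'_{c\pm 1}$, and those lying outside the second block: these follow because the corresponding differences either exceed $|W_l| - |W_k|$ or are negative, using $|\rho| \leq |W_l| - |W_k| + a_{\textnormal{max}} < |W_l|$. I would also note that throughout it suffices to track \emph{expected} occurrences of $W_k$, since goodness is defined in terms of expected occurrences; this is what lets the rigidity argument ignore any unexpected copies of $W_k$ that $\phi(x)$ might contain.
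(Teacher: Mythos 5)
Your proof is correct. Part (1) coincides with the paper's argument: both propagate goodness through the first block and then across the gap and through the second block by repeated application of Lemma~\ref{lemmabasic1}, using that the expected occurrence of $W_n$ in $\phi(x)$ reproduces the internal spacers $b_1,\dots,b_{p-1}$. Where you genuinely diverge is part (2). The paper disposes of it in one line: a single application of Lemma~\ref{lemmabasic1} makes the $W_l$ at $j_1$ bad, and then ``repeated application of Lemma~\ref{lemmabasic2}'' is claimed to make every $W_l$ in $[i+|W_n|+s,\, i+2|W_n|+s)$ bad. As you correctly observe, Lemmas~\ref{lemmabasic1} and \ref{lemmabasic2} only transfer \emph{goodness} between consecutive occurrences, and applying Lemma~\ref{lemmabasic2} at a hypothetical good $j_c$ only yields ``$b_{c-1}\neq t'$'' unless one already knows that the expected $W_l$ of $\phi(x)$ at $j_c-r_{x,j_c}$ is the aligned copy $j'_c$ (in which case $t'=b_{c-1}$ and the contradiction follows). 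Your rigidity argument supplies exactly this missing alignment: the window $[\,j_c-|W_l|+|W_k|,\ j_c\,]$ can meet at most one expected $W_l$ of $\phi(x)$, your boundary estimates using $-a_{\textnormal{max}}\le\rho\le |W_l|-|W_k|+a_{\textnormal{max}}$ force it to be $j'_c$, and then $\rho$ and $r_{x,i}$ would be two expected $W_k$-offsets inside $W_l$ at distance $0<|s-t|\le a_{\textnormal{max}}$, contradicting Lemma~\ref{facts=t}. (A marginally shorter finish: $\phi(x)$ already has an expected $W_k$ at $j_c+(t-s)$, namely the offset-$r_{x,i}$ copy inside the expected $W_l$ at $j'_c$, so a second occurrence of $W_k$ at $j_c$ violates Lemma~\ref{facts=t} directly, without first identifying the containing $W_l$.) The net effect is that your write-up proves the same statement by making explicit the positional bookkeeping that the paper's appeal to Lemma~\ref{lemmabasic2} leaves implicit; what it costs is the extra boundary estimates, and what it buys is an argument that does not depend on knowing in advance which expected $W_l$ of $\phi(x)$ witnesses the putative goodness.
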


\begin{proof}
Since the occurrence of $W_n$ beginning at $i$ in $x$ is totally good, it is expected.  This implies that the occurrence of $W_n$ beginning at $i + |W_{n}| + s$ is expected.

If $s = t$, then repeated application of Lemma \ref{lemmabasic1} shows that the occurrence of $W_n$ beginning at $i + |W_{n}| + s$ is totally good.

If $s \neq t$.  Then a single application of Lemma \ref{lemmabasic1} shows that the expected occurrence of $W_l$ beginning at $i + |W_{n}| + s$ is bad.  Then repeated application of Lemma \ref{lemmabasic2} shows that each expected occurrence of $W_l$ that is contained in the interval $[i + |W_n| + s, i + 2|W_n| + s)$ is bad.  Thus, the occurrence of $W_{n}$ beginning at $i + |W_{n}| + s$ is totally bad.
\end{proof}

Lemma \ref{lemmaadvanced} immediately implies the following proposition.

\begin{proposition}
\label{proptotallybadW_n}
Let $n>l$ and suppose $x \in X$ has a totally good occurrence of $W_n$ beginning at $i$ and $\phi (x)$ has an expected occurrence of $W_n$ beginning at $i - r_{x,i}$.  If $j>i$ is as small as possible so that $x$ has a bad occurrence of $W_l$ beginning at $j$, then $x$ has a totally bad occurrence of $W_n$ beginning at $j$.
\end{proposition}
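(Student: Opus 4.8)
The plan is to deduce this proposition directly from Lemma~\ref{lemmaadvanced}, treating it as a bookkeeping consequence rather than a new argument. The key observation is that Proposition~\ref{proptotallybadW_n} is really just Lemma~\ref{lemmaadvanced} applied at the correct place along the string of consecutive occurrences of $W_n$. Since we have a totally good occurrence of $W_n$ beginning at $i$, and $x$ is built from $W_n$ (because $W_n$ is on the canonical generating sequence, by the uniqueness-of-expected-occurrences result from \cite{GaoHill} Proposition 2.31), the element $x$ decomposes as a sequence of consecutive expected occurrences of $W_n$ separated by blocks of $1$s. First I would locate the index $j$ within this decomposition.

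The main step is to track where the first bad occurrence of $W_l$ (beginning at $j$) sits relative to the string of $W_n$ blocks. I would argue that $j$ must be the starting position of an expected occurrence of $W_n$. Indeed, as long as we are inside a totally good occurrence of $W_n$, every expected occurrence of $W_l$ inside it is good by definition, so no bad $W_l$ can begin strictly inside a totally good $W_n$ block. Proceeding from $i$ through successive $W_n$ blocks, each time Lemma~\ref{lemmaadvanced}(1) applies (i.e.\ the gap lengths $s$ and $t$ match) the next $W_n$ block is again totally good and contributes no bad $W_l$. The first failure of the matching condition $s = t$ is therefore what produces the first bad occurrence of $W_l$, and by the minimality of $j$, this failure occurs precisely at the transition into the $W_n$ block beginning at $j$.

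More carefully, let the consecutive expected occurrences of $W_n$ beginning at $i = i_0 < i_1 < i_2 < \cdots$ be those appearing in $x$, where $i_{p+1} = i_p + |W_n| + s_p$ for the intervening gap $s_p$ of $1$s. Starting from the totally good occurrence at $i_0$, I would apply Lemma~\ref{lemmaadvanced} inductively: so long as the occurrence at $i_p$ is totally good (with $\phi(x)$ having the required expected occurrence of $W_n$ at $i_p - r_{x,i}$), the hypotheses of the lemma are met, and the occurrence at $i_{p+1}$ is either totally good (if $s_p = t_p$) or totally bad (if $s_p \neq t_p$). Since a totally good block contains no bad $W_l$ and a totally bad block begins with a bad $W_l$ at its start, the smallest $j > i$ with a bad $W_l$ at $j$ must be some $i_{p+1}$ where $s_p \neq t_p$, and this $i_{p+1}$ is exactly the start of a totally bad occurrence of $W_n$. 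I would note that the expected-occurrence hypothesis for $\phi(x)$ propagates: each totally good block yields an expected occurrence of $W_n$ in $\phi(x)$ at the shifted position, feeding the hypothesis of the next application.

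The main obstacle I anticipate is handling the hypothesis in Lemma~\ref{lemmaadvanced} that $\phi(x)$ has an \emph{expected} occurrence of $W_n$ beginning at the shifted position, which must be maintained at each step of the induction rather than assumed only at $i$. For the totally good blocks this is automatic from the paragraph preceding Lemma~\ref{lemmaadvanced} together with the matching condition, but I must make sure the bookkeeping of the values $r_{x, i_p}$ and $t_p$ is consistent as $p$ increases — in particular that $r$ does not drift, which follows because $\phi$ commutes with $\sigma$ and shifts the whole picture rigidly. Once this invariant is verified the proposition is immediate, so the proof will be short; the care lies entirely in confirming that Lemma~\ref{lemmaadvanced} can be chained without its hypotheses breaking down before the first bad occurrence is reached.
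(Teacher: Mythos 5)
Your proposal is correct and follows exactly the route the paper intends: the paper gives no written proof, stating only that Lemma~\ref{lemmaadvanced} immediately implies the proposition, and your argument is precisely the iterated application of that lemma along the consecutive expected occurrences of $W_n$, with the correct observations that the minimality of $j$ forces every block before the one containing $j$ to be totally good, that the dichotomy in Lemma~\ref{lemmaadvanced} then makes the block containing $j$ totally bad with $j$ at its start, and that the expected-occurrence hypothesis for $\phi(x)$ propagates because $\phi(x)$ is built from $W_n$. No changes needed.
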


\begin{proposition}
\label{proptotallygoodW_{n+2}}
Let $n>l$ and suppose $x \in X$ has a totally good occurrence of $W_{n+2}$ beginning at $i$.  Then $x$ has a totally good occurrence of $W_n$ beginning at $i$ and $\phi(x)$ has an expected occurrence of $W_n$ beginning at $i - r_{x,i}$.
\end{proposition}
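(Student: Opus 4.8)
The plan is to read off both conclusions directly from the structural results already in hand, namely Lemma~\ref{propW_{n+2}} together with the observation recorded just after the definition of totally good occurrences (repeated application of Lemma~\ref{lemmabasic1}). The one conceptual point worth isolating in advance is why the proposition is phrased with a two-level gap, from $W_{n+2}$ down to $W_n$, rather than from $W_{n+1}$ down to $W_n$: the occurrence of $W_{n+2}$ that will appear in $\phi(x)$ carries no guarantee of being \emph{expected}, and Lemma~\ref{propW_{n+2}} is precisely the tool that converts a bare occurrence of $W_{n+2}$ into an expected occurrence of $W_n$.

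First I would dispose of the first conclusion. Since the occurrence of $W_{n+2}$ at $i$ is totally good it is in particular expected, and because $W_{n+2}$ is built from $W_n$ (through $W_{n+1}$), the canonical $W_n$-decomposition of $x$ refines the $W_{n+2}$-decomposition; hence the expected occurrence of $W_{n+2}$ at $i$ begins with an expected occurrence of $W_n$ at $i$, and $[i, i+|W_n|) \subseteq [i, i+|W_{n+2}|)$. By the definition of totally good, every expected occurrence of $W_l$ contained in $[i, i+|W_{n+2}|)$ is good, so in particular every expected occurrence of $W_l$ contained in $[i, i+|W_n|)$ is good. Therefore the expected occurrence of $W_n$ at $i$ is totally good, as claimed.

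For the second conclusion I would first note that the expected occurrence of $W_l$ beginning at $i$ is good (it lies inside the totally good $W_{n+2}$), so $r_{x,i}$ is defined. Then, by the remark following the definition of totally good occurrences, the totally good occurrence of $W_{n+2}$ at $i$ produces an occurrence of $W_{n+2}$ in $\phi(x)$ beginning at $i - r_{x,i}$; in general this occurrence is merely an occurrence, not an expected one. The final move is to apply Lemma~\ref{propW_{n+2}} to the point $\phi(x) \in X$: an occurrence of $W_{n+2}$ at $i - r_{x,i}$ forces an expected occurrence of $W_n$ at $i - r_{x,i}$, which is exactly what is required.

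I do not anticipate a genuine obstacle here, since the argument is essentially an assembly of earlier lemmas; the only subtlety to flag is that Lemma~\ref{propW_{n+2}} must be invoked for $\phi(x)$ rather than for $x$, which is legitimate because $\phi(x) \in X$, and this is precisely the place where the two-level gap is used: one level would leave us with a possibly unexpected occurrence of $W_{n+1}$ in $\phi(x)$ and no way to recover expectedness.
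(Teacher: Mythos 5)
Your proof is correct and follows essentially the same route as the paper: the first conclusion is immediate from the refinement of expected occurrences, and the second is obtained by propagating an occurrence of $W_{n+2}$ to $\phi(x)$ via repeated application of Lemma~\ref{lemmabasic1} and then invoking Lemma~\ref{propW_{n+2}} for $\phi(x)$. Your remark about why the two-level gap is needed is exactly the point of the argument.
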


\begin{proof}
It is obvious that $x$ has a totally good occurrence of $W_n$ beginning at $i$. To prove the proposition we need only to show that $\phi(x)$ has an expected occurrence of $W_n$ beginning at $i - r_{x, i}$.  Since $x$ has a good occurrence of $W_l$ beginning at $i$, there is an occurrence of $W_l$ in $\phi (x)$ beginning at $i - r_{x, i}$.  Repeated application of Lemma \ref{lemmabasic1} implies that $\phi(x)$ as an occurrence of $W_{n+2}$ beginning at $i - r_{x,i}$.  Now, by Lemma \ref{propW_{n+2}}, $\phi(x)$ has an expected occurrence of $W_n$ beginning at $i - r_{x, i}$.
\end{proof}

\begin{proposition}
\label{propcontradiction}
There is no $x \in X$ such that for some $d >0$ there is a string of $d$ many bad occurrences of $W_l$ that is immediately preceded by a string of $2Q^3d$ many good occurrences of $W_l$ and is immediately followed by a good occurrence of $W_l$.
\end{proposition}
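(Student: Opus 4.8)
The plan is to argue by contradiction. Suppose some $x \in X$ and some $d > 0$ admit a string of $d$ consecutive bad occurrences of $W_l$ that is immediately preceded by $2Q^3 d$ consecutive good occurrences of $W_l$ and immediately followed by a good occurrence of $W_l$. I would index the expected occurrences of $W_l$ in $x$ by $\Z$ so that the bad string occupies indices $0, 1, \dots, d-1$, the preceding good string occupies indices $-2Q^3 d, \dots, -1$, and the following good occurrence has index $d$. The overall strategy is to produce, just to the left of the bad string, a \emph{totally good} occurrence of a sufficiently long word $W_{n+2}$, feed it through Propositions \ref{proptotallygoodW_{n+2}} and \ref{proptotallybadW_n} to manufacture a \emph{totally bad} occurrence of $W_n$ starting exactly at the bad string, and then observe that $W_n$ is too long to fit inside a bad block of length only $d$.

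To fix the scale, let $n$ be least such that $W_n$ contains more than $d$ expected occurrences of $W_l$. Since each $W_{m+1}$ contains $q_m$ copies of $W_m$ with $2 \le q_m \le Q$, and $W_l$ contains a single copy of $W_l$, we have $n > l$, so the propositions above apply. Minimality gives that $W_{n-1}$ contains at most $d$ copies of $W_l$, and multiplying by the cutting parameter three more times shows that $W_{n+2}$ contains at most $d Q^3$ copies of $W_l$.

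The key step is a counting argument that locates a totally good $W_{n+2}$-block inside the good region. I would consider the expected $W_{n+2}$-block $A$ that contains the good occurrence of $W_l$ at index $-1$, and let $A'$ be the expected $W_{n+2}$-block immediately preceding $A$. Since each of $A$ and $A'$ spans at most $d Q^3$ consecutive expected occurrences of $W_l$, the block $A'$ occupies indices within $[-2 d Q^3, -2]$, hence lies entirely inside the good region $[-2 Q^3 d, -1]$; consequently every expected occurrence of $W_l$ inside $A'$ is good, i.e.\ $A'$ is a totally good occurrence of $W_{n+2}$. This is exactly where the factor $2Q^3$ in the hypothesis is consumed: one factor of $Q^3$ accounts for block $A$ sitting (almost entirely) to the left of index $-1$, and the second for the length of $A'$ itself.

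Let $i$ be the starting position of $A'$. By Proposition \ref{proptotallygoodW_{n+2}}, $x$ then has a totally good occurrence of $W_n$ beginning at $i$ and $\phi(x)$ has an expected occurrence of $W_n$ beginning at $i - r_{x,i}$, so the hypotheses of Proposition \ref{proptotallybadW_n} are met. Because $A'$ is totally good and lies entirely to the left of index $0$, the first bad occurrence of $W_l$ at a position $j > i$ is precisely the occurrence at index $0$. Proposition \ref{proptotallybadW_n} then yields a totally bad (in particular expected) occurrence of $W_n$ beginning at index $0$, which therefore covers the expected occurrences of $W_l$ at indices $0, 1, \dots, M-1$, where $M > d$ is the number of copies of $W_l$ in $W_n$. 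Since $M - 1 \ge d$, this block contains the occurrence of $W_l$ at index $d$, which is good, contradicting that the block is totally bad. I expect the main obstacle to be the bookkeeping in the counting step: verifying that the penultimate block $A'$ really does fit inside the $2Q^3 d$ good occurrences regardless of how the fixed $W_{n+2}$-block structure happens to be aligned against the bad string, and confirming that the ``first bad occurrence after $i$'' delivered by Proposition \ref{proptotallybadW_n} coincides with the true start of the bad string. Once the alignment is pinned down, the length comparison $M > d$ finishes the argument at once.
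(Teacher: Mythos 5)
Your proof is correct and takes essentially the same route as the paper's: pick the least $n$ with $W_n$ containing more than $d$ expected occurrences of $W_l$, use the bound $2N(l,n+2)\leq 2Q^3d$ to locate a totally good expected occurrence of $W_{n+2}$ inside the string of $2Q^3d$ good occurrences, and then apply Propositions \ref{proptotallygoodW_{n+2}} and \ref{proptotallybadW_n} to produce a totally bad occurrence of $W_n$ starting at the bad string, which forces more than $d$ consecutive bad occurrences and contradicts the good occurrence at index $d$. The only difference is cosmetic: you pin down the specific $W_{n+2}$-block immediately preceding the one containing index $-1$, while the paper argues abstractly that any string of $2N(l,m+2)-1$ consecutive expected occurrences of $W_l$ contains a full $W_{m+2}$-block; both alignment checks work out.
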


\begin{proof}
Suppose, towards a contradiction, that $d > 0$ and $x \in X$ has a string of $d$ many bad occurrences of $W_l$ beginning at $j$ that is immediately preceded by a string of $2Q^3d$ many good occurrences of $W_l$ beginning at $i$ and that is immediately followed by a good occurrence of $W_l$.

For $n \geq l$, let $N(l,n)$ denote the number of expected occurrences of $W_l$ in $W_n$.  Note that for $n \geq l$, $N(l, n+3) \leq Q^3 N(l, n)$.  

Choose $m > l $ so that $$ N(l, m-1) \leq d < N(l, m).$$

We will now show that $x$ has an expected occurrence of $W_{m+2}$ completely contained in the interval $[i, j)$.  We know that every expected occurrence of $W_l$ is contained in exactly one expected occurrence of $W_{m+2}$.  It follows that every string of at least $2N(l, m+2) - 1$ expected occurrences of $W_l$ contains exactly one expected occurrence of $W_{m+2}$.  But we also know that $N(l, m-1) \leq d$, which implies that $$2N(l, m+2) \leq 2 Q^3 N(l, m-1) \leq 2 Q^3 d.$$  Since there is a string of $2Q^3$ good occurrences of $W_l$ beginning at $i$ and ending with the last zero before $j$, There must be an expected occurrence of $W_{m+2}$ completely contained in the interval $[i, j)$.  

Now, it follows from Propositions  \ref{proptotallybadW_n} and \ref{proptotallygoodW_{n+2}} that $x$ has a totally bad occurrence of $W_m$ beginning at $j$.  Thus, there is a string of $N(l,m)$ bad occurrences of $W_l$ beginning at $j$.      
Since $d< N(l,m)$, this contradicts the fact that $x$ has a string of $d$ bad occurrences of $W_l$ which is followed by a string of one good occurrence of $W_l$.
\end{proof}

We will now prove that $\phi$ is an integral power of $\sigma$, thus completing the proof of Theorem \ref{thmtrivialcentralizer}.  We know from Propositions \ref{propmuae} and \ref{propcontradiction} that for $\mu$ almost every $x \in X$, every expected occurrence of $W_l$ is good.  Proposition \ref{propallgood} then implies that for almost every $x \in X$, there is some $i \in \Z$ so that $\phi (x) = \sigma ^i (x)$.  For each $i \in \Z$, let $A_i = \{x \in X : \phi (x) = \sigma^i (x)\}$.  Since $\phi$ commutes with $\sigma$, each $A_i$ is $\sigma$-invariant.  There must be some $i$ for which $\mu [A_i] > 0$ and by the ergodicity of $\sigma$, it must be that $\mu [A_i] = 1$.  Since for $\mu$ almost every $x \in X$, $\phi (x) = \sigma^i (x)$, we have that $\phi = \sigma^i$.  Since $\phi$ is an integral power of $\sigma$, so is $\psi$.  This completes the proof Theorem \ref{thmtrivialcentralizer}.

\section{Characterizing total ergodicity}

In this section we characterize total ergodicity for rank-one transformations with bounded cutting parameters. Recall that a measure preserving transformation $T$ is {\it totally ergodic} if each $T^k$ is ergodic for any positive integer $k$.

The objective of this section is to prove the following expanded version of Theorem \ref{thmintrototalergodicity}.  

\begin{theorem}
\label{thmtotalergodicity}
Let $(q_n)$ and $(a_{n,i})$ be the cutting and spacer parameters for a rank-one transformation $T$ and suppose the cutting parameter is bounded.  The following are equivalent.
\begin{enumerate}
\item [(i)]  $T$ is totally ergodic.
\item [(ii)] For all $d>1$ and $N \in \N$, there exists $k \in \N$ such that $\mu [T^k(B_N) \cap B_N]>0$ and  $d \ndiv k$.
\item [(iii)] For all $d>1$ and $N \in \N$, there exist $n \geq N$ and $0 < i < q_n$ so that $d \ndiv h_N + a_{n,i}$.
\item  [(iv)] For all $d>1$ and $N \in \N$, there exist $n \geq N$ and $0 \leq l < h_{n+1}$ such that $T^l (B_{n+1}) \subseteq B_n$ and $d \ndiv l$.
\end{enumerate}
\end{theorem}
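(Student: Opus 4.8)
The plan is to prove the cycle of implications $(i) \Rightarrow (ii) \Rightarrow (iii) \Rightarrow (iv) \Rightarrow (i)$, exploiting the fact that conditions $(ii)$, $(iii)$, and $(iv)$ are all explicit combinatorial reformulations of the same return-time phenomenon in the cutting-and-stacking tower, while the top and bottom implications connect this to genuine total ergodicity. The two analytic implications are $(i)\Rightarrow(ii)$ and $(iv)\Rightarrow(i)$; the two middle implications $(ii)\Leftrightarrow(iii)$ and $(iii)\Leftrightarrow(iv)$ should be bookkeeping with the tower structure.

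First I would establish $(i)\Rightarrow(ii)$ by contraposition. If $(ii)$ fails, there are $d>1$ and $N$ such that every $k$ with $\mu[T^k(B_N)\cap B_N]>0$ is divisible by $d$. The return times of the base $B_N$ to itself generate (via the density of the towers in the measure algebra) the whole structure of the orbit, so if all such return times are multiples of $d$ then $T^d$ fails to be ergodic: concretely, the union $\bigcup_{0\le r < d}$ of the levels $T^{m}(B_N)$ with $m\equiv r \pmod d$ partitions a dense subalgebra into $d$ many $T^d$-invariant pieces, each of positive measure, contradicting ergodicity of $T^d$. Here the key is that $\mu[T^k(B_N)\cap B_N]>0$ exactly when some level of the stage-$N$ tower returns to the base after $k$ steps, which is governed by the heights $h_n$ and the inserted spacers.

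Next, the equivalences $(ii)\Leftrightarrow(iii)\Leftrightarrow(iv)$ are combinatorial translations. For $(iii)\Leftrightarrow(iv)$, I would observe that an $l$ with $T^l(B_{n+1})\subseteq B_n$ is precisely a position of an expected copy of $B_n$ inside the stage-$(n+1)$ tower; these positions are $0, h_n, 2h_n + a_{n,1}, \ldots$, so that the gap between the base $B_{n+1}=B_{n,1}$ and the $(i{+}1)$-st block equals $i\,h_n + \sum_{0<j\le i} a_{n,j}$, and comparing this quantity mod $d$ against $h_N$ (the offset needed to land back in the stage-$N$ base) yields the $h_N + a_{n,i}$ expression in $(iii)$ after telescoping the recursion for $h_n$. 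The point throughout is that a return of $B_N$ to itself decomposes, stage by stage, into a sum of block-heights and spacer values, so the residues mod $d$ that occur are exactly those generated by the individual spacer entries $a_{n,i}$ together with the fixed height $h_N$.

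The main obstacle I expect is $(iv)\Rightarrow(i)$, which must produce genuine total ergodicity from the combinatorial condition. The strategy is to fix $d>1$ and show $T^d$ is ergodic by showing that the only eigenvalue obstruction—a $d$-th root of unity eigenvalue—cannot occur, or equivalently that $T^d$ has no nontrivial invariant partition into $d$ congruence classes of tower levels. Given the hypothesis, for every $N$ there is a stage $n\ge N$ and a block position $l$ with $T^l(B_{n+1})\subseteq B_n$ and $d\nmid l$; this means that within a single stage-$(n+1)$ tower there are two copies of the stage-$N$ base whose offset is not a multiple of $d$, so the putative $T^d$-invariant set cannot consistently assign the same residue to all levels it contains. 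Since the towers are dense in the measure algebra, any supposed $T^d$-invariant function must be constant on arbitrarily fine tower levels, and the existence of these offset-breaking returns at cofinally many stages forces it to be globally constant, giving ergodicity of $T^d$ for every $d$ and hence total ergodicity. The delicate part will be making the "residue cannot be consistently assigned" argument rigorous using approximation by tower levels rather than a clean eigenfunction computation.
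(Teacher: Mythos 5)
Your cycle of implications is the same as the paper's, and your sketches of (i)$\Rightarrow$(ii), (ii)$\Rightarrow$(iii), and (iii)$\Rightarrow$(iv) are recoverable, though two repairs are needed: the sets $\bigcup_{m\equiv r}T^m(B_N)$ you describe are not literally $T^d$-invariant (applying $T^d$ to the top levels leaves the tower); the paper instead takes the forward orbit $\bigcup_{r}T^{rd}(B_N)$ and uses the hypothesis that all return times of $B_N$ are divisible by $d$ to show it misses $T(B_N)$. And in (iii)$\Rightarrow$(iv) you must choose $n\geq N$ \emph{minimal} with $d\nmid h_N+a_{n,i}$, since minimality is what forces $h_n\equiv h_N\ (\mathrm{mod}\ d)$ via the height recursion and thereby converts the quantity $h_N+a_{n,i}$ of (iii) into the gap $h_n+a_{n,i}$ between consecutive $n$-block bases that (iv) actually requires; "telescoping the recursion" alone does not give this for an arbitrary $n$ witnessing (iii).

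The genuine gap is in (iv)$\Rightarrow$(i), exactly at the step you flag as "the delicate part." Knowing that one tower level $T^{l_0}(B_N)$ is $\delta$-almost contained in the $T^k$-invariant set $A$ controls only the levels in a single residue class mod $k$; an offset-breaking return at stage $n\geq N$ lets you reach a second residue class, but only by descending to sublevels of a higher tower and re-ascending, and each such passage degrades the approximation constant. Your sketch asserts that cofinally many offset-breaking returns "force" the invariant set to be globally constant, but supplies no mechanism controlling this degradation, and never identifies where the bounded cutting parameter is used --- yet that hypothesis is essential precisely here. The paper's Lemma 5.1 does the missing bookkeeping: it tracks the set $S\subseteq\{0,\dots,k-1\}$ of $(N,\delta)$-good residues, shows one offset-breaking return strictly enlarges $S$ at the cost of replacing $\delta$ by $2kQ\delta$ (the factor $Q$ arising because a stage-$n$ level splits into at most $Q$ equal-measure stage-$(n+1)$ levels), and iterates at most $k-1$ times starting from $\delta=\epsilon/(2kQ)^{k-1}$. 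Without this, or an equivalent quantitative substitute such as the eigenfunction computation you mention but decline to carry out (where boundedness enters via the estimate $\frac{1}{q_n}\bigl|\sum_{l}\lambda^{l}\bigr|\leq 1-\delta_d/Q$ whenever some $\lambda^{l}\neq 1$), the implication (iv)$\Rightarrow$(i) is not proved.
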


It is relatively straightforward to show that (i) $\rightarrow$ (ii) $\rightarrow$ (iii) $\rightarrow$ (iv), even without the assumption that the cutting parameter is bounded.  The real work is in showing that (iv) $\rightarrow$ (i), and here we do need the bounded cutting parameter assumption.  

There is some preliminary work that we will do before giving the proof of this theorem.

We first remind the reader of the usual proof that every rank-one transformation $T$ is ergodic.  Suppose $A$ has positive measure and is $T$-invariant.  Our objective is to show that $A$ has full measure.  Let $\epsilon>0$ and choose $N_0 \in \N$ so that the measure of the stage-$N_0$ tower is greater than $1- \epsilon$.  Since the levels of the towers form a dense subset of the measure algebra, we can find some $N > N_0$ and some $0 \leq l < h_N$ so that $$\frac{\mu [T^l (B_N) \setminus A]}{\mu[ T^l (B_N)]} < \epsilon,$$ i.e., so that {\em most} of the points in the level $T^l (B_N)$ belong to $A$.  Since $A$ is $T$-invariant, the same must be true of every level of the stage-$N$ tower, i.e., for every $0 \leq l < h_N$, $$\frac{\mu [T^l (B_N) \setminus A]}{\mu[ T^l (B_N)]} < \epsilon.$$  This implies that the measure of the points in the stage-$N$ tower but not in $A$ is less than $\epsilon$.  Since $N > N_0$, the measure of the points not in the stage-$N$ tower is less than $\epsilon$.  Thus $\mu [A] > 1 - 2\epsilon$.  Since $\epsilon$ was arbitrary, $A$ has full measure.

Now we outline the argument that (iv) implies (i), in Theorem \ref{thmtotalergodicity}.  Assume (iv) holds, let $k>1$, and let $A$ be a set of positive measure that is $T^k$-invariant.  Our objective is to show that $A$ has full measure.  We let $\epsilon >0$ and choose $N_0 \in \N$ so that the measure of the stage-$N_0$ tower is greater than $1-\epsilon$.  For technical reasons (used in Lemma \ref{lemmatotalergodicity}), we also require that $h_{N_0} \geq k$.  We then choose $\delta \ll \epsilon$ and find $N>N_0$ and $0 \leq l_0 <  h_N$ so that $$\frac{\mu [T^{l_0} (B_N) \setminus A]}{\mu[ T^{l_0} (B_N)]} < \delta,$$ i.e., so that {\em most} of the points in the level $T^{l_0} (B_N)$ belong to $A$.  Then, since $A$ is $T^k$-invariant, we have that for all $0 \leq l < h_N$ with $l$ congruent to $l_0$ mod $k$, $$\frac{\mu [T^l (B_N) \setminus A]}{\mu[ T^l (B_N)]} < \delta.$$  We then (and herein lies the real work) find $M\geq N$ so that for {\em every} $0 \leq l < h_M$, $$\frac{\mu [T^l (B_M) \setminus A]}{\mu[ T^l (B_M)]} < \epsilon.$$  This implies that the measure of the points in the stage-$M$ tower but not in $A$ is less than $\epsilon$.  Since $M > N_0$, the measure of the points not in the stage-$M$ tower is less than $\epsilon$.  Thus $\mu [A] > 1 - 2\epsilon$.  Since $\epsilon$ was arbitrary, $A$ has full measure.

In order to fill in the outline above we need to specify how small $\delta$ must be and how to choose $M \geq N$ appropriately.  To do this we need Lemma \ref{lemmatotalergodicity} below, which in turn requires some notation and a few observations.  The notation, observations, and the lemma should be viewed in the context of proving (iv) implies (i) in the above outline.  In particular, we are assuming that:
\begin{itemize}
\item  $(q_n)$ and $(a_{n,i})$ are the cutting and spacer parameters for a rank-one transformation $T$, and the cutting parameter is bounded with bound $Q$;
\item  for all $d>1$ and $N \in \N$, there exist $n \geq N$ and $0 \leq l < h_{n+1}$ such that $T^l (B_{n+1}) \subseteq B_n$ and $d \ndiv l$; and
\item  $k>1$ and $A$ is a $T^k$-invariant set of positive measure.
\end{itemize}      

Here is the notation we will need.

\begin{itemize}
\item  If $l$ is a non-negative integer, let $[l]_k$ denote the congruence class of $l$ mod $k$, i.e., the unique $s \in \{0,1, \ldots, k-1\}$ so that $l$ and $s$ are congruent mod $k$.  Also, if $S \subseteq \N$ then $[S]_k = \{[s]_k: s \in S\}$.
\item  Let $Z$ be a measurable set and $\delta >0$.  We say $Z$ is {\em $\delta$-almost contained in $A$}, and write $Z \subseteq_\delta A$, iff $$\frac{\mu [Z \setminus A]}{\mu[Z]} < \delta.$$
\item  Let $N \in \N$, $\delta >0$ and $S \subseteq \{0, 1, \ldots, k-1\}$.  We say $S$ is {\em $(N, \delta)$-good} if for all $0 \leq l < h_N$ with $[l]_k \in S$, we have $T^l (B_N) \subseteq_\delta A$.  
\end{itemize}

Here are the observations.
\begin{enumerate}
\item  Suppose $Z$ is partitioned into the sets $Z_1, Z_2, \ldots, Z_n$.  
\begin{enumerate}
\item  If $Z_i \subseteq_\delta A$ for all $i$, then $Z \subseteq_\delta A$.
\item  If $Z \subseteq_\delta A$, then for some $i$, $Z_i \subseteq_\delta A$.
\item  If $Z \subseteq_\delta A$ and $\mu[Z_i] = \mu[Z_j]$ for all $i$ and $j$, then $Z_i \subseteq_{n\delta} A$ for all $i$.
\item  If $Z \subseteq_\delta A$ and $\mu[Z_i] \leq 2\mu[Z_j]$ for all $i$ and $j$, then $Z_i \subseteq_{2n\delta} A$ for all $i$.
\end{enumerate}
All of these facts are easy consequences of the definition of $\subseteq_{\delta}$.

\item  To show that $S \subseteq \{0,1, \ldots, k-1\}$ is $(N, \delta)$-good, it suffices to find, for each $s \in S$, a single $0 \leq l < h_N$ with $[l]_k = s$ so that $T^l (B_N) \subseteq_\delta A$.  This follows immediately from the fact that $A$ is $T^k$-invariant.

\end{enumerate}

And here is the lemma.

\begin{lemma} 
\label{lemmatotalergodicity}
Suppose $S \subseteq \{0,1, \ldots, k-1\}$ is $(N, \delta)$-good with $0 < |S| < k$ and $h_N \geq k$.  Then there are $N^\prime \geq N$ and $S^\prime \subseteq \{0,1, \ldots, k-1\}$ with $|S^\prime| > |S|$ such that $S^\prime$ is $(N^\prime, 2kQ\delta)$-good.
\end{lemma}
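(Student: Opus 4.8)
The plan is to recast the statement inside the cyclic group $\Z/k\Z$ and to obtain $S'$ by climbing to a much higher tower in which the base residues of the stage-$N$ copies are nearly equidistributed. For $N\le m$ and $1\le i\le q_m$ write $p^{(m)}_i$ for the base position of the $i$-th copy of the stage-$m$ tower inside the stage-$(m+1)$ tower, so the levels $l$ with $T^l(B_{m+1})\subseteq B_m$ are exactly $p^{(m)}_1<\dots<p^{(m)}_{q_m}$ and consecutive ones differ by $h_m+a_{m,i}$. Iterating, for any $n\ge N$ the copies of the stage-$N$ tower inside the stage-$(n+1)$ tower have base set $J=\{\,\sum_{m=N}^{n}p^{(m)}_{i_m}:1\le i_m\le q_m\,\}$, of size $P=q_N\cdots q_n$, and $T^l(B_N)=\bigsqcup_{j\in J}T^{j+l}(B_{n+1})$ for each $0\le l<h_N$. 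The first step is a \emph{transfer principle}: if $T^l(B_N)\subseteq_\delta A$ with $[l]_k=s$, group the $P$ sublevels by the residue $[j+l]_k=[j]_k+s$; if a value $[j]_k=\rho$ is attained by at least $P/(2k)$ of the $j\in J$, then the corresponding subunion is $\subseteq_{2k\delta}A$ (its mass missing from $A$ is at most that of $T^l(B_N)$, while its own mass is at least $\mu[T^l(B_N)]/(2k)$), so by observation (1)(b) some sublevel of residue $\rho+s$ lies $\subseteq_{2k\delta}A$, and by observation (2) the whole class $\rho+s$ is $(n+1,2k\delta)$-good. Hence the good set at stage $n+1$ contains $R+S$, where $R=\{\rho:\ \rho\text{ is attained by }\ge P/(2k)\text{ of the }j\in J\}$.

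It therefore suffices to choose $n$ so large that $R=\Z/k\Z$: then $R+S=\Z/k\Z$ (as $S\ne\emptyset$), so $S'=\Z/k\Z$ is $(n+1,2k\delta)$-good with $|S'|=k>|S|$, and $2k\delta\le 2kQ\delta$. To arrange this I would view the residues $[j]_k$ as the positions of a time-inhomogeneous random walk on $\Z/k\Z$ whose step at stage $m$ is $[p^{(m)}_i]_k$ with $i$ drawn uniformly from $\{1,\dots,q_m\}$ (a step supported on a set containing $0$, since $p^{(m)}_1=0$); the multiset $\{[j]_k:j\in J\}$ is exactly the walk's distribution after steps $N,\dots,n$. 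Here condition (iv) enters twice. First, were the steps to generate a proper subgroup of $\Z/k\Z$ they would all lie in $p\,\Z/k\Z$ for a prime $p\mid k$, forcing $p\mid p^{(m)}_i$ for all $m\ge N$ and all $i$, contradicting (iv) with $d=p$; so the steps generate $\Z/k\Z$. Second, applying (iv) with $d$ equal to the order of a nontrivial character $\chi$ yields infinitely many stages $m$ whose step is not supported on a single coset of $\ker\chi$. Because the cutting parameter is bounded by $Q$, each such step is an average of at most $Q$ roots of unity that are not all equal, and so contracts $\widehat{\,\cdot\,}(\chi)$ by a factor bounded away from $1$ in terms of $Q$ and $k$ alone; the product over infinitely many such stages tends to $0$, so the walk converges to the uniform distribution on $\Z/k\Z$. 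Thus for all sufficiently large $n$ every residue class occurs with multiplicity within a factor two of $P/k$, in particular $\ge P/(2k)$, giving $R=\Z/k\Z$.

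The hard part is exactly this quantitative equidistribution. Condition (iv) only promises that a ``mixing'' stage occurs \emph{somewhere} above $N$, with no bound on how high one must climb before a moving shift appears, so a naive stage-by-stage transfer would shed a factor $Q$ at each level and blow up. The real force of the bounded-cutting hypothesis is that it turns each mixing stage into a definite, $n$-independent contraction of every nontrivial Fourier coefficient, which is what caps the accumulated loss at $2k\delta$ (comfortably inside the allowed $2kQ\delta$) no matter how large $n$ must be taken. Making this rigorous — extracting the uniform spectral gap from $q_m\le Q$, and confirming that observations (1) and (2) keep the loss additive rather than multiplicative across the single refinement from $B_N$ to $B_{n+1}$ — is the heart of the argument; everything else is bookkeeping in $\Z/k\Z$.
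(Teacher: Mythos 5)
Your argument is correct in outline but takes a genuinely different route from the paper's. The paper never needs equidistribution: it applies hypothesis (iv) exactly once, with $d_0$ the generator of the stabilizer $D=\{d\in\Z:[S+d]_k=S\}$ (a proper subgroup because $0<|S|<k$), to produce a single $n\ge N$ and a single $l$ with $T^l(B_{n+1})\subseteq B_n$ and $[S+l]_k\ne S$; it then transfers goodness in two elementary steps --- first locating one copy of the stage-$N$ tower inside the stage-$n$ tower on which $[S+i]_k$ is $(n,2k\delta)$-good (this is where $h_N\ge k$ and the factor $2k$ enter), then pushing down to stage $n+1$ along the two sub-blocks $j=0$ and $j=l$ (this is where the factor $Q$ enters) --- and takes $S'=[S+i]_k\cup[S+i+l]_k$, which is strictly larger precisely because $l\notin D$. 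The paper thus gains only one residue per application, which is why the main theorem iterates the lemma up to $k-1$ times and accumulates $(2kQ)^{k-1}$. You instead spend (iv) on every character order $d\mid k$, infinitely often, to force the base residues of the stage-$N$ copies inside a very high tower to equidistribute mod $k$, and land on $S'=\Z/k\Z$ in one shot with the better constant $2k\delta$. Your transfer principle (pigeonhole on residue classes of $J$, then observation (1b) to select a single sublevel, then observation (2)) is sound, and the estimate you defer --- that an average of at most $Q$ many $k$-th roots of unity, not all equal, has modulus at most $1-c(k,Q)$, applied at the infinitely many stages that (iv) supplies for each nontrivial character, with the non-mixing stages contributing only unimodular phases --- does go through. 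What your approach buys is a stronger conclusion that would render the iteration in the main proof unnecessary; what it costs is the Fourier-analytic machinery and the obligation to actually write out the uniform contraction, which you have correctly identified as the heart of your argument but have only sketched. The paper's proof is weaker per step but entirely elementary.
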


\begin{proof}
Consider $D = \{d \in \Z : [S+d ]_k = S\}.$  Clearly $D$ is a subgroup of $\Z$, and thus $D = d_0\Z$ for some $d_0 \in \N$.  Since $k \in D$, $d_0 \neq 0$.   Since $0 < |S| < k$, $1 \notin D$ and hence, $d_0\neq 1$.  

Now choose $n \geq N$ and $0 \leq l <h_{n+1}$ so that $T^l (B_{n+1}) \subseteq B_n$ and $d_0\ndiv l$.  Note that $l \notin D$ and hence, $[S+l]_k \neq S$.  

{\em Claim 1:}  There is some $i$ so that $[S+i]_k$ is $(n, 2k\delta)$-good.

{\em  Proof:}  
We know $S \subseteq \{0, 1, \ldots, k-1\}$ is $(N, \delta)$ good.  So if $0 \leq l < h_N$ and $[l]_k \in S$, then $T^l(B_N) \subseteq_\delta A$.  Let $L_S = \{0 \leq l < h_N : [l]_k \in S\}$ and let $$Z = \bigcup_{l \in L_S} T^l (B_N) .$$  By observation (1a), we have $Z \subseteq_\delta A$.  

We can also view $Z$ as the disjoint union of all levels of the stage $n$ tower of the form $T^{i + l} (B_n)$, where $0 \leq i < h_n$ is such that $T^{i} (B_n) \subseteq B_N$ and $l \in L_S$.  By observation (1b), there must be some $0 \leq i < h_n$ with $T^{i} (B_n) \subseteq B_N$ such that letting $$Y = \bigcup_{l \in L_S} T^{i + l}(B_n)$$ we have $Y \subseteq_\delta A$. 

For $s \in S$, let $L_s =  \{0 \leq l < h_N : [l]_k =s\}$.  Let $$Y_s = \bigcup_{l \in L_s} T^{i + l} (B_n)$$ and note that $\{Y_s : s \in S\}$ is a partition of $Y$.  Note that for each $s \in S$, the cardinality of $L_s$ is between the floor and the ceiling of $\frac{h_N}{k}$.  Since $h_N \geq k$, the floor of $\frac{h_N}{k}$ is at least 1.  Thus, $|L_s| \leq 2 |L_{s^\prime}|$ for all $s,s^\prime \in S$.  Since each level of the stage-$N$ tower has the same measure, $\mu[Y_s] \leq 2 \mu[Y_{s^\prime}]$ for all $s, s^\prime \in S$.  Thus, by observation (1d), we have $Y_s \subseteq_{2|S| \delta} A,$ for all $s \in S$.  

For $s \in S$, we know that $Y_s \subseteq_{2 |S| \delta} A$ and hence, by observation (1b), there is some $l \in L_s$ so that $T^{i + l} (B_n) \subseteq_{2|S| \delta} A$.  By observation (2), $[S+ i]_k$ is $(n, 2|S|\delta)$-good.    As $k>|S|$, we also know $[S+ i]_k$ is $(n, 2k\delta)$-good.   This finishes the proof Claim 1.

{\em  Claim 2:}  If $0 \leq j <h_{n+1}$ is such that $T^{j} (B_{n+1}) \subseteq B_n$, then $[S+i + j]_k$ is $(n+1, 2kQ\delta)$-good.   

{\em  Proof:}  Since $[S + i]_k $ is $(n, 2k\delta)$-good, we know that for $r \in [S+i]_k$, $$T^r(B_n) \subseteq_{2k\delta} A.$$  Note that $T^r (B_n)$ is the disjoint union of at most $Q$ levels of the stage-$(n+1)$ tower, each having the same measure and one of them being $T^{j +r} (B_{n+1})$.  This implies, by observation (1c), that $$T^{ r+j} (B_{n+1}) \subseteq_{Q2k\delta} A.$$  By observation (2), $[S+ i + j]_k$ is $(n+1, 2kQ\delta)$-good.  This finishes the proof of Claim 2.

Applying Claim 2 twice (independently), once with $j=0$ and once with $j = l$, shows that both $[S + i]_k$ and $[S + i + l]_k$ are $(n+1, 2kQ\delta)$-good.  Letting $N^\prime = n+1$ and $S^\prime = [S + i +l ]_k \cup [S + i]_k$, we have that $S^\prime$ is $(N^\prime, 2kQ\delta)$-good.  Since $[S+ l]_k \neq [S]$, we know that $[S + i + l]_k \neq [S + i]_k$ and hence, $|S^\prime| > |S|$.
\end{proof}

We now give the proof of Theorem~\ref{thmtotalergodicity}.

\begin{proof}[Proof of Theorem~\ref{thmtotalergodicity}]
We first show that (iv) implies (i).  Suppose (iv) holds.  Let $k>1$ and let $A$ be a set of positive measure that is $T^k$-invariant.  To show (i) holds, we need to show that $A$ has full measure.  Let $\epsilon >0$ and choose $N_0 \in \N$ so that the measure of the stage-$N_0$ tower is greater than $1-\epsilon$ and so that $h_{N_0} \geq k$.  Let $\delta = \epsilon/(2kQ)^{k-1}$ and find $N>N_0$ and $0 \leq l <  h_N$ so that $T^l (B_N) \subseteq_\delta A$.  Thus, by observation (2), we know $\{[l]_k\}$ is $(N, \delta)$-good.  By successively applying Lemma \ref{lemmatotalergodicity} as many as $k-1$ times, we can find $M \geq N$ such that $\{0, 1, \ldots, k-1\}$ is $(M, (2kQ)^{k-1} \delta)$ good.  Thus, since $(2kQ)^{k-1} \delta = \epsilon$, for every $0 \leq l < h_M$, $T^l (B_M) \subseteq_\epsilon A$.  This implies that the measure of the points in the stage-$M$ tower but not in $A$ is less than $\epsilon$.  Since $M > N_0$, the measure of the points not in the stage-$M$ tower is less than $\epsilon$.  Thus $\mu [A] > 1 - 2\epsilon$.  Since $\epsilon$ was arbitrary, $A$ has full measure.  This completes the proof that (iv) implies (i).

We now show that (i) implies (ii).  Suppose (ii) does not hold and fix $d>1$ and $N \in \N$ so that whenever $k\in \N$ is such that $\mu [T^k (B_N) \cap B_N] > 0$, $d | k$.   

We claim that $T^d$ is not ergodic, which implies that $T$ is not totally ergodic.  It suffices to find a positive measure set whose forward orbit does not have full measure.  Let $$A = \bigcup_{r \in \N} T^{rd} (B_N).$$  We need to show that $A$ does not have full measure.  Suppose, to the contrary, that $A$ does have full measure.  Then $\mu [A \cap T(B_N)] > 0$, and thus there is some $r \in \N$ so that $\mu[T^{rd} (B_N) \cap T(B_N)] > 0$.  This implies that $\mu [T^{rd - 1} (B_N) \cap B_N] > 0$ and thus, $d | (rd - 1)$, a contradiction.  Thus $A$ does   This completes the proof that (i) implies (ii).

We now show that (ii) implies (iii).  Suppose (ii) holds, but (iii) does not.  Since (iii) does not hold, we can find $d > 1$ and $N \in \N$ so that for all $n \geq N$ and $0<i < q_n$, $$d | h_N + a_{n,i}.$$  For $M \geq N$, we can view the stage-$M$ tower as a collection of $N$-blocks, with each pair of consecutive $N$-blocks separated by exactly $a_{n,i}$ spacers, for some $N \leq n < M$ and $0<i < q_n$.  Since we know $d | h_N + a_{n,i}$ for all $n \geq N$ and $0<i < q_n$ we have that if $T^l (B_M)$ is the base of one of the $N$-blocks, then $d | l$.

Now, since (ii) holds, we can find $k \in \N$ so that $\mu [B_N \cap T^k (B_N)] >0$ and $d \ndiv k$.   Let $\alpha = \mu [B_N \cap T^k (B_N)] $ and choose $M >N$ large enough that 
$$\mu \left[\bigcup_{l =k}^{h_M -1} T^l (B_M)\right] > 1-\alpha.$$  Therefore, there must be some $k \leq l < h_M$ so that $$B_N \cap T^k (B_N) \cap T^l (B_M) \neq \emptyset .$$  Since $B_N \cap T^l (B_M)$ is nonempty, we know that $T^l (B_M)$ is in the base of one of the $N$-blocks, and thus $d | l$.  Also, since $T^k (B_N) \cap T^l (B_M)$ is nonempty, we know $B_N \cap T^{l - k} (B_M)$ is nonempty.  This implies that $T^{l-k}(B_M)$ is the base of one of the $N$-blocks, which implies $d | (l-k)$.  Thus $d | k$, a contradiction.  This completes the proof that (ii) implies (iii).

We now show (iii) implies (iv).  Suppose (iii) holds and fix any $d >1$ and $N \in \N$.  We need to show there are $n \geq N$ and $0 \leq l < h_n$ such that $T^l (B_{n+1}) \subseteq B_n$ and $d \ndiv l$.  Since (iii) holds, we know there exist $n \geq N$ and $0<i< q_n$ such that $d$ does not divide $h_N + a_{n,i}$.  Fix such an $n$ and $i$ with $n\geq N$ as small as possible.

We first remark that the minimality of $n$ implies that $h_n$ is congruent to $h_N$ mod $d$.  Indeed, we can view the stage-$n$ tower as a collection of $N$-blocks with each pair of consecutive $N$-blocks separated by exactly $a_{n^\prime, i ^\prime}$ spacers for some $N \leq n^\prime < n$ and $0 < i^\prime < q_{n^\prime}$.  We know by the minimality of $n$ that for each such $a_{n^\prime, i^\prime}$, $$d | h_N +  a_{n^\prime, i^\prime}.$$ This implies that $h_n$ is congruent to $h_N$ mod $d$.  

Next we consider the stage-$(n+1)$ tower as a collection of $n$-blocks with each pair of consecutive $n$-blocks separated by $a_{n, j}$ for some $0<j<q_n$.  Define $l_j$ for $0 \leq j < q_n$ as follows:  $l_0 =0$ and $l_{j+1} = l_j + h_n + a_{n,j}$.  Notice that $\{l_j : 0 \leq j < q_n\}$ consists precisely of those $0 \leq l < h_{n+1}$ such that $T^l (B_{n+1}) \subseteq B_n$.  

We know that $d \ndiv (h_N + a_{n,i})$ and we also know that $h_n$ is congruent to $h_N$ mod $d$.  Thus $d \ndiv (h_n + a_{n,i})$.  Thus at least one of $l_i$ and $l_{i+1}$ is not divisible by $d$.  In either case there is some $0 \leq l < h_{n+1}$ such that $T^l (B_{n+1}) \subseteq B_n$ and $d \ndiv l$.  This completes the proof that (iii) implies (iv).
\end{proof}

\section{Concluding remarks}

Using the main result of Ryzhikov in \cite{Ryzhikov} and Theorems \ref{thmintrotrivialcentralizer2} and \ref{thmintrototalergodicity} from this paper, we now have a simple procedure for determining whether a bounded rank-one transformation has MSJ.  Suppose $(q_n)$ and $(a_{n,i})$
are bounded cutting and spacer parameters for a rank-one transformation $T$.  To determine whether $T$ has MSJ, check whether the following statements are both true.
\begin{enumerate}
\item  There exists $k \in \N$ such that for all $N \in \N$ there exist $n, m, i,j$, with $N \leq n,m < N+k$, $0<i<q_n$ and $0<j<q_m$, such that $a_{n,i} \neq a_{m,j}$.
\item  For all $N \in \N$ and $d>1$ there exist $n \geq N$ and $0 < i < q_n$ such that $d \ndiv h_N + a_{n,i}.$
\end{enumerate}
Theorem \ref{thmintrotrivialcentralizer2} tells us that $T$ has trivial centralizer iff the first statement is true.  Theorem $\ref{thmintrototalergodicity}$ tells us that $T$ is totally ergodic iff the second statement is true.  And Ryzhikov's theorem tells us that $T$ has trivial centralizer iff $T$ has trivial centralizer and is totally ergodic.

We remark that some parts of this procedure are valid if only the cutting parameter is assumed to be bounded.  In particular, bounded cutting parameter is enough for Theorem \ref{thmintrototalergodicity} to guarantee that $T$ is totally ergodic iff the second statement is true.  Also, bounded cutting parameter is enough to show that if the first statement is false, then $T$ does not have trivial centralizer (see Proposition \ref{propnottrivialcentralizer}).  Finally, if $T$ does not have trivial centralizer or is totally ergodic, then $T$ cannot have MSJ.  We do not know whether the other parts of this procedure are valid if only the cutting parameter is assumed to be bounded.

\end{document}